\newcounter{itemcounter}
\numberwithin{itemcounter}{subsection}
\newtheorem{thm}[itemcounter]{Theorem}
\newtheorem{lem}[itemcounter]{Lemma}
\newtheorem{prop}[itemcounter]{Proposition}
\newtheorem{cor}[itemcounter]{Corollary}
\newtheorem{con}[itemcounter]{Conjecture}
\newtheorem{rem}[itemcounter]{Remark}
\title{On Rouquier Blocks for Finite Classical Groups at Linear Primes}
\author{Michael Livesey}
\begin{document}

\begin{abstract}
H. Miyachi and W. Turner have independently proved that Brou\'{e}'s Abelian
Defect Group Conjecture holds for
certain unipotent blocks of the finite general linear group, the so-called
Rouquier blocks\cite{miyach2001} and\cite[Section 2, Theorem 1]{turner2002}.
This together with A.
Marcus\cite[Theorem 4.3(b)]{marcus1994} and J. Chuang and R.
Rouquier\cite[Theorem 7.18]{churou2008} proves that the conjecture holds for all
blocks of such groups. We prove that other finite classical groups also possess
unipotent Rouquier blocks at linear primes.
\end{abstract}

\maketitle

\tableofcontents
\newpage

\section{Introduction and Notation}\label{intro}

Let $p$ be a prime and consider the $p$-modular system $(K,\mathcal{O},k)$ such
that $K$ contains enough roots of unity for the groups being considered in this
paper.
\newline
\newline
Let $G$ be a finite group and $b$ a block idempotent of $\mathcal{O}G$ with
defect group $P$. Recall that to each subgroup $H$ of $G$ containing $N_G(P)$
there exists a unique block $\mathcal{O}Hc$ associated to $b$ through the Brauer
homomorphism.
\newline
\newline
We state Brou\'{e}'s Abelian Defect Group Conjecture\cite[Chapter
6.3.3]{konzim1998}.

\begin{con}[Brou\'{e}]
Let $G$ be a finite group and $P$ an abelian $p$-subgroup. Let $b$ be a block
idempotent of $\mathcal{O}G$ with defect group $P$ and Brauer correspondent $c$
in $N_G(P)$. Then $\mathcal{O}Gb$ and $\mathcal{O}N_G(P)c$ are derived
equivalent.
\end{con}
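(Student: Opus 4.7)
The statement as written is Brou\'e's conjecture in full generality, which remains a famous open problem; accordingly the plan can only sketch the standard strategic framework and indicate where the genuine difficulties lie. The first step is Clifford-theoretic reduction. Using the Fong--Reynolds theorem together with Dade's results on blocks of groups with a normal subgroup, and Marcus's lifting of Morita equivalences to derived equivalences\cite{marcus1994}, one tries to reduce the claim to the case where $G$ is quasi-simple with a faithful abelian-defect block $b$. By the classification of finite simple groups this then splits into four cases: $G/Z(G)$ cyclic (classical Brauer-tree theory), alternating (Chuang--Rouquier's $\mathfrak{sl}_2$-categorification\cite{churou2008}), sporadic (a finite list, checked individually), and finite groups of Lie type.

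The Lie-type case is the principal source of difficulty. In defining characteristic, the abelian-defect hypothesis is so restrictive that the relevant blocks are already understood. In non-defining characteristic one would apply Bonnaf\'e--Dat--Rouquier-style Jordan decomposition of blocks to reduce the question to unipotent blocks, possibly of smaller groups. Within the unipotent blocks of a fixed group, the strategy is to establish for each weight and Harish-Chandra type a perverse derived equivalence between every abelian-defect unipotent block and a distinguished \emph{canonical} block of the same numerical invariants, namely the Rouquier block. Chuang--Rouquier\cite{churou2008} carried out such a programme for $\mathrm{GL}_n$; the existence results of the present paper are what would permit an analogous strategy for the other finite classical groups at linear primes. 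The conjecture must then be verified for the Rouquier blocks themselves, which for $\mathrm{GL}_n$ is due to Miyachi\cite{miyach2001} and Turner\cite{turner2002}.

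The principal obstacles, in rough increasing order of severity, are the following. First, one must track derived (not merely Morita) equivalences through the Clifford reductions, which relies on refinements of Marcus-type arguments. Second, one must extend the reduction-to-Rouquier-blocks machinery beyond type $A$ and, eventually, beyond linear primes; the combinatorics underlying Harish-Chandra series and Jordan decomposition becomes substantially more intricate for unitary primes and exceptional types, and this is precisely why the present paper restricts to classical groups at linear primes. Third, and hardest of all, one must construct the derived equivalence between a Rouquier block and its Brauer correspondent uniformly across families, rather than reprove it group by group. I expect the second of these to be the decisive bottleneck: once the reduction to Rouquier blocks is in place for a given family of groups, the remaining arguments follow the now well-developed Miyachi--Turner template, but that reduction is where essentially new input is needed in each new case.
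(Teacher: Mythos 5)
You correctly recognised that the statement in question is a conjecture, not a theorem, and the paper itself presents it as such without proof; the paper's contribution is only to verify it for certain unipotent Rouquier blocks of finite classical groups at linear primes. Your survey of the reduction strategy (Clifford-theoretic reductions, CFSG, Jordan decomposition of blocks, reduction to Rouquier blocks) accurately reflects how the paper situates its own contribution within that programme, so there is no proof in the paper to compare against and none is expected.
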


The conjecture is known to hold for symmetric groups. Recall that to each block
$B$ of $\mathcal{O}S_n$ there is an associated non-negative integer $w$ called
the weight of $B$. The defect group $P$ of $B$ is abelian if and only if $w<p$,
in which case $P\cong(C_p)^w$. The proof consists of three steps:

\begin{enumerate}
\item For each $w$ with $(0\leq w<p)$ there exists an $n\geq1$ and a block $B$
of $\mathcal{O}S_n$ which is Morita equivalent to the principal block of
$\mathcal{O}(S_p\wr S_w)$\cite[Section 3, Theorem 2]{chukes2002}.
\item The principal block of $\mathcal{O}(S_p\wr S_w)$ is derived equivalent to
the Brauer correspondent of $B$ in $N_{S_n}(P)$\cite[Theorem
4.3(b)]{marcus1994}.
\item Any two blocks $B$, $B'$ of $\mathcal{O}S_n$, $\mathcal{O}S_{n'}$
respectively of the same weight have Morita equivalent Brauer correspondents.
\item Any two blocks $B$, $B'$ of $\mathcal{O}S_n$, $\mathcal{O}S_{n'}$
respectively of the same weight are derived equivalent\cite[Theorem
7.2]{churou2008}.
\end{enumerate}

These methods have been adapted for unipotent blocks of finite general linear
groups\cite[Section 2, Theorem 1]{turner2002}, \cite[Theorem 7.18]{churou2008}.
We investigate (1), (2) and (3) for unipotent blocks of other finite classical
groups.
\newline
\newline
Let $q$ be a prime power (we allow $q$ to be even only in the case of the unitary group)
and $G=G_m(q)$ be a group of the form $U_n(q)$, $Sp_{2n}(q)$,
$CSp_{2n}(q)$, $SO_{2n+1}(q)$, $SO^+_{2n}(q)$, $SO^-_{2n}(q)$, $CSO^+_{2n}(q)$
or $CSO^-_{2n}(q)$. We adopt the notation that elements of $U_n(q)$ have
entries in $\mathbb{F}_q$ so $q$ is a square, say $q=q_0^2$. For a positive
integer $d$ we let $J_d$ be the $d\times d$ matrix with entries in
$\mathbb{F}_q$ with $1$s along the antidiagonal and $0$s elsewhere. To each of
the above groups types we
associate the $2$-fold extension $GL_d(q).2=\langle GL_d(q),s\rangle$ of
$GL_d(q)$ described as follows:

\begin{enumerate}
 \item[(i)] \[ s^2= \bigg\{
   \begin{matrix}
     -1 & \text{for $Sp_{2n}(q)$ or $CSp_{2n}(q)$}\\
     1 & \text{otherwise}
   \end{matrix}
\]
 \item[(ii)] and for all $A\in GL_d(q)$
\[ sAs^{-1}= \Bigg\{
   \begin{matrix}
     J_dA^{-q_0t}J_d & \text{for $U_n(q)$}\\
     -J_dA^{-t}J_d & \text{for $Sp_{2n}(q)$ or $CSp_{2n}(q)$}\\
     J_dA^{-t}J_d & \text{otherwise}
   \end{matrix}
\]
\end{enumerate}

We have a natural homomorphism $GL_d(q).2\rightarrow\{\pm1\}$
with kernel $GL_d(q)$. This extends to a map $GL_d(q).2\wr
S_w\rightarrow\{\pm1\}$ and we use $M$ to denote the kernel of this map.
\newline
\newline
We now recall some facts about the unipotent blocks of $\mathcal{O}G$, more
details can be found in section~\ref{sec:blocks}. To each
unipotent block $B$ of $\mathcal{O}G$ there is an associated non-negative
integer $w$ called the weight of $B$. As with the symmetric group the defect
group of $B$ is abelian if and only if $w<p$. In the case of $SO^+_{2n}(q)$ or
$CSO^+_{2n}(q)$ we have the notion of $B$ being degenerate. For a fixed prime
power $q$ we have the notion of $p$ being a linear or unitary prime with
respect to $q$.
\newline
\newline
Now let $p$ be a linear prime with respect to $q$ and $d$ the multiplicative
order of $q\mod{p}$.
\newline
\newline
We now state our main theorem.

\begin{thm}\label{thm:intro}(cf. Theorem~\ref{thm:nondegen},
Theorem~\ref{thm:degen}, Corollary~\ref{cor:so2n}, Lemma~\ref{lem:twist})
Let $p$ be a linear prime with respect to $q$. For $CSp_{2n}(q)$ and
$CSO^\pm_{2n}(q)$ let's denote
by $Z_p$ the $p$-part of the the multiplicative group
$\mathbb{F}_q^\times$. For all other groups $Z_p$ will be the trivial group. For
all $w$ $(0\leq w<p)$ there
exists some $m=m(w)$ and some unipotent block $B$ of $\mathcal{O}G_m(q)$ of
weight $w$ such that:
\begin{enumerate}
 \item $B$ is Morita equivalent to the principal block of
$\mathcal{O}((GL_d.2\wr S_w)\times Z_p)$ if $G=U_n(q)$, $Sp_{2n}(q)$,
$CSp_{2n}(q)$ or $SO_{2n+1}(q)$ or if $G=SO^\pm_{2n}(q)$ or $CSO^\pm_{2n}(q)$
and $B$ is
non-degenerate.
 \item $B$ is Morita equivalent to the principal block of $\mathcal{O}(M\times
Z_p)$ if $G=SO^+_{2n}(q)$ or $CSO^+_{2n}(q)$ and $B$ is degenerate.
\end{enumerate}
\end{thm}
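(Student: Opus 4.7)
My plan is to prove Theorem~\ref{thm:intro} as a case-by-case consequence of the subsequent results cited in the statement. For each weight $w<p$ I would first choose $m=m(w)$ to be the smallest integer for which $G_m(q)$ admits a $d$-split Levi subgroup of the form $L\cong GL_d(q)^w\times L_0$, with $L_0$ either trivial or a small torus contributing no non-central unipotent block structure. The desired unipotent block $B$ of weight $w$ is then the block of $\mathcal{O}G_m(q)$ corresponding, via $d$-Harish-Chandra theory, to the cuspidal pair $(L,1_L)$; this is the classical-group analogue of the Rouquier block and the natural candidate to realise the Morita equivalence.

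The heart of the argument is an explicit description of the relative Weyl group $N_{G_m(q)}(L)/L$. For all group types except the degenerate blocks of $SO^+_{2n}(q)$ and $CSO^+_{2n}(q)$ this is of type $B_w\cong C_2\wr S_w$; in the degenerate case it is the index-$2$ subgroup of type $D_w$. The $C_2$-factors act on each $GL_d(q)$-component by the outer automorphism prescribed by the group type, with $s^2=-1$ in the symplectic cases; this is precisely the automorphism defining $GL_d(q).2$ in the introduction, a point that would be made precise by Lemma~\ref{lem:twist}. In the conformal group cases the connected centre $\mathbb{F}_q^\times$ contributes the factor $Z_p$ through Jordan decomposition. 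With this set-up, the Morita equivalence between $B$ and the principal block of $\mathcal{O}((GL_d.2\wr S_w)\times Z_p)$, or of $\mathcal{O}(M\times Z_p)$ in the degenerate case, would be proved by adapting the source-algebra arguments of \cite{miyach2001,turner2002} for $GL_n(q)$, using $d$-Harish-Chandra induction to transport the principal block of the normalizer back to $B$. Statement~(1) would then be the content of Theorem~\ref{thm:nondegen} together with Corollary~\ref{cor:so2n}, and statement~(2) of Theorem~\ref{thm:degen}.

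The main obstacle is two-fold. First, in the symplectic and unitary cases the extension $GL_d(q).2$ is non-split, so the correct $2$-cocycle must be tracked carefully through each step of the equivalence rather than merely its underlying isomorphism type; this is the technical heart of Lemma~\ref{lem:twist}. Second, the degenerate $SO^+$ case requires verifying that the Brauer correspondent genuinely descends to the index-$2$ subgroup $M$ of $GL_d(q).2\wr S_w$, which amounts to understanding exactly which degenerate unipotent blocks of $L$ are fixed by the $B_w/D_w$-component of the relative Weyl group and then checking that the corresponding source algebra does not pick up the remaining coset.
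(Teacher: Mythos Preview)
Your proposal has a genuine gap in the choice of the block $B$. You propose taking $m=m(w)$ \emph{minimal} so that $G_m(q)$ has a Levi of the form $GL_d(q)^w\times L_0$ with $L_0$ essentially trivial, i.e.\ a block with empty or minimal $e$-core. This is not the Rouquier block, and for such a block the Morita equivalence you want is not expected to hold (for $w>1$ one only gets a derived equivalence, not a Morita equivalence, with the wreath product). The paper instead chooses a \emph{large} $e$-core $\rho$ satisfying the Rouquier condition~(9): a $2d$-abacus representation in which consecutive runners have bead counts increasing by at least $w-1$. This choice is what makes the combinatorics work; in particular Lemma~\ref{lem:aba} (the Chuang--Kessar lemma) requires exactly this staircase condition, and it is the engine behind the whole argument.

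Correspondingly, you have mis-identified the technical heart of the proof. It is not a description of the relative Weyl group or a source-algebra computation, nor is it the $2$-cocycle tracking you attribute to Lemma~\ref{lem:twist} (that lemma is a short Clifford-theoretic reduction using \cite{cabeng1993}). The real work is the $\mathcal{O}$-rank calculation of Proposition~\ref{prop:orank}: one takes the Green correspondent $X$ of $\mathcal{O}Gb$ in $G\times N$, embeds $\mathcal{O}Nf\hookrightarrow\operatorname{End}_{\mathcal{O}G}(X)\hookrightarrow\operatorname{End}_{\mathcal{O}G}(Y)$ where $Y$ is the iterated Harish-Chandra bimodule, and then shows by an explicit character computation that the two ends have the same $\mathcal{O}$-rank. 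That computation collapses precisely because the Rouquier condition on $\rho$ forces the branching under $R_{G_{i+1},b_{i+1}}^{G_i,b_i}$ to be controlled by single bead-moves on the $2d$-abacus. With a trivial core this fails, the multiplicities blow up, and the rank equality does not hold.
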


We will then prove that the Brauer correspondent of $B$ in $N_G(P)$ is derived
equivalent to the appropriate block in the above theorem and thus we obtain:

\begin{cor}(cf. Corollary~\ref{cor:bottom})
The block $B$ of $\mathcal{O}G$ satisfies Brou\'{e}'s Abelian Defect Group
Conjecture.
\end{cor}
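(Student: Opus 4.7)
The plan is to chain two equivalences. Theorem~\ref{thm:intro} already provides a Morita equivalence (hence a derived equivalence) between the unipotent block $B$ of $\mathcal{O}G$ and the principal block of either $\mathcal{O}((GL_d(q).2\wr S_w)\times Z_p)$ or $\mathcal{O}(M\times Z_p)$, depending on whether we are in the non-degenerate or degenerate case. Thus it suffices to produce a derived equivalence between this wreath-product (or index-two subgroup) block and the Brauer correspondent $\mathcal{O}N_G(P)c$, and then invoke transitivity of derived equivalence.

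To execute the second leg, I would first pin down the structure of $N_G(P)$. Since $p$ is linear with respect to $q$ and $w<p$, the defect group $P$ is an elementary abelian $p$-subgroup of rank $w$ contained in a Sylow $\Phi_d$-torus of $G$, and the standard analysis of the normaliser of such a torus (see the earlier sections of the paper on unipotent blocks) identifies $N_G(P)$ up to a $p'$-factor with something built out of $N_{GL_d(q)}(C_p)$ wreathed with $S_w$, together with the twist by the graph automorphism $s$ that distinguishes the classical group from $GL_d(q)$. Concretely, the Brauer correspondent $c$ should correspond to the principal block of $(N_{GL_d(q)}(C_p).2\wr S_w)\times Z_p$, or its index-two subgroup in the degenerate orthogonal situation.

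The key input is then the classical fact that the principal block of $\mathcal{O}GL_d(q)$ (restricted to its $\Phi_d$-part) is splendidly Morita, and hence derived, equivalent to the principal block of $\mathcal{O}N_{GL_d(q)}(C_p)$, because $C_p$ is cyclic of order $p$ and the block has a unique simple module up to the action of the relative Weyl group (this is just Brauer's work in the cyclic defect situation). Tensoring up this derived equivalence to the wreath product via Marcus's theorem \cite[Theorem 4.3(b)]{marcus1994} yields a derived equivalence between the principal blocks of $\mathcal{O}(GL_d(q).2\wr S_w)$ and $\mathcal{O}(N_{GL_d(q)}(C_p).2\wr S_w)$, which is precisely what we need after adjoining the central $Z_p$ factor. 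The degenerate case requires passing to the index-two subgroup $M$; since Marcus's result is compatible with taking kernels of the sign map, the same derived equivalence descends.

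The main obstacle, and the part that will occupy most of the work referenced by Corollary~\ref{cor:bottom}, is the careful identification of $N_G(P)$ inside each classical group and the verification that the graph automorphism $s$ (with the sign conventions fixed in the introduction) really is the one appearing in the wreath product $GL_d(q).2\wr S_w$. In particular, making sure the central $Z_p$ factor for $CSp_{2n}(q)$ and $CSO^\pm_{2n}(q)$ and the index-two restriction to $M$ in the degenerate $SO^+$/$CSO^+$ case match on both sides of the Brauer correspondence is the delicate bookkeeping; once this is done, Marcus's theorem together with the cyclic-defect derived equivalence for $GL_d(q)$ closes the argument.
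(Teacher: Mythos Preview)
Your plan matches the paper's: chain Theorem~\ref{thm:intro} with a parallel Morita identification of the Brauer correspondent (the paper's Lemma~\ref{lem:twist2}), then reduce to a derived equivalence between the principal blocks of $\mathcal{O}(GL_d(q).2\wr S_w)$ and $\mathcal{O}(N_{GL_d(q).2}(R)\wr S_w)$ (resp.\ $\mathcal{O}M$ and $\mathcal{O}M'$) via cyclic defect theory and Marcus's wreathing theorem. One minor correction: the Sylow $p$-subgroup $R\leq GL_d(q)$ has order $p^a$ (the full $p$-part of $q^d-1$), so $P$ is homocyclic rather than elementary abelian; this does not affect the argument since $R$ is still cyclic.

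The degenerate case, however, is a genuine gap as written. Saying the derived equivalence ``descends'' from $GL_d(q).2\wr S_w$ to the index-two kernel $M$ is not a valid step: Marcus's Theorem~4.3(b) extends equivalences \emph{upward} from a normal subgroup, not downward to one. The paper instead works from below. It starts with the cyclic-defect derived equivalence between the principal blocks of $\mathcal{O}GL_d(q)$ and $\mathcal{O}N_{GL_d(q)}(R)$, chosen (via \cite[Example~5.5]{marcus1994}) to carry a consistent diagonal action of $N_{GL_d(q).2}(R)$; wreathing with $S_w$ via \cite[Theorem~4.3(b)]{marcus1994} yields an equivalence between the principal blocks of $\mathcal{O}(GL_d(q)\wr S_w)$ and $\mathcal{O}(N_{GL_d(q)}(R)\wr S_w)$ still carrying a diagonal $M'$-action; finally Marcus's extension theorem \cite[Theorem~3.2(b)]{marcus1994} (stated in the paper as Theorem~\ref{thm:deriv}) lifts this to the principal blocks of $\mathcal{O}M$ and $\mathcal{O}M'$. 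So the required equivariance must be tracked from the bottom up, not inherited from the top down.
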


Finally we will prove that if $B$ and $B'$ are unipotent blocks of
$\mathcal{O}G_m(q)$ and
$\mathcal{O}G_{m'}(q)$ respectively with the same weight and either both
non-degenerate or both degenerate then they have Morita equivalent Brauer
correspondents (see Corollary~\ref{cor:norm}). We will then have an analogue for parts (1), (2) and (3) in the
proof of the conjecture for the symmetric groups for unipotent blocks at linear
primes of our finite classical groups.
\newline
\newline
We should note that the Morita equivalences we construct in~\ref{thm:nondegen},~\ref{thm:degen}
and~\ref{cor:so2n} are splendid in the sense of\cite[Chapter 9.2.5]{konzim1998}
and that they ultimately gives rise to splendid derived equivalences between $B$
and its Brauer correspondent in $N_G(P)$.
\newline
\newline
Sections 2,3 and 4 give some basic background information before our finite
classical groups are introduced. In section 2 we describe the combinatorial
objects partitions and symbols. They will later be used to label the characters
and blocks of our finite classical groups. In section 3 we will take a brief
look at the Weyl groups of type $A_n$, $B_n$ and $D_n$ including branching rules
which will ultimately be used to describe what happens to characters of our
finite classical groups under Harish-Chandra induction. Section 4 consists of a
brief overview of the Brauer homomorphism and the Brauer correspondence.
\newline
\newline
Section 5 is where we first introduce the finite classical groups that are the
subject of this paper. We do this in two ways. First as the group of fixed
points of some algebraic group under some Frobenius endomorphism and secondly as
the group of linear maps that preserve or scale some bilinear form. We then go on
to describe certain Levi subgroups of the finite classical groups. In section 6
we set up a labeling for the characters of the groups in question. Section 7
describes Harish-Chandra induction from the Levi subgroups described in section
5 and we look at what effect this has on characters. Unipotent blocks are introduced
in section 8 and we describe the defect groups as well as exactly what
characters are in such a block.
\newline
\newline
Our main theorems~\ref{thm:nondegen} and~\ref{thm:degen} are proved in section
9 and we then go on to look at how the main theorems are applied to Brou\'{e}'s Abelian
Defect Group Conjecture. In section 10 we go on to prove the analogues of steps (2) and (3) in the
proof of Brou\'{e}'s Abelian Defect Group Conjecture for the symmetric groups.

\section{Some Combinatorics}
\subsection{Partitions and Symbols}
For our description of partitions and symbols we follow G. Hiss and R.
Kessar\cite[Section 2.2]{hiskes2000}.
\newline
\newline
A partition $\lambda$ of a non-negative integer $n$ is a finite ordered set
$(\alpha_1,\alpha_2,\dots,\alpha_t)$ of positive integers with
$\alpha_1\geq\alpha_2\geq\dots\geq\alpha_t$ and
$\alpha_1+\alpha_2+\dots+\alpha_t=n$. We write $\lambda\vdash n$ or
$|\lambda|=n$.
\newline
A beta-set of a partition $\lambda=(\alpha_1,\alpha_2,\dots,\alpha_t)$ is a
finite set of non-negative integers $\{\beta_1,\beta_2,\dots,\beta_s\}$ where
$\beta_1<\beta_2<\dots<\beta_s$ and $\alpha_i=\beta_{s-i+1}-s+i$ for $1\leq
i\leq s$ and $\alpha_i$ is considered to be zero for $i>t$.
\newline
Now if $d$ is a non-negative integer then the $d$-shift of a beta-set
$\{\beta_1,\beta_2,\dots,\beta_s\}$ is
$\{0,1,\dots,d-1\}\cup\{\beta_1+d,\beta_2+d,\dots,\beta_s+d\}$.
\newline
Two beta-sets are said to be equivalent if one is the $d$-shift of the other for
some non-negative integer $d$ and two beta-sets give rise to the same partition
if and only if they are equivalent.
\newline
\newline
A symbol is an unordered pair of beta-sets $\{X,Y\}$. A symbol $\{X,Y\}$ is said
to be degenerate if $X=Y$. Two symbols are said to be equivalent if one of them
can be obtained from the other by simultaneous $d$-shifts on both parts of the
symbol for some non-negative integer $d$.
\newline
\newline
The defect of a symbol $\{X,Y\}$ is the quantity:
\begin{center}
$||X|-|Y||$
\end{center}

The rank is:

\begin{center}
$\sum_{x\in X}+\sum_{y\in Y}-\lfloor(\frac{|X|+|Y|-1}{2})^2\rfloor$
\end{center}

Both defect and rank are constant on equivalence classes of symbols.

\subsection{Hooks and Cores}\label{sec:hook}

The Young diagram for a partition $\lambda=(\alpha_1,\alpha_2,\dots,\alpha_t)$
is an arrangement of boxes with $\alpha_i$ boxes in the $i$th row. For example
the partition $(5,5,3,2)$ has Young diagram:

\begin{center}
$\yng(5,5,3,2)$
\end{center}

A hook of a partition is any box in its Young diagram together with everything directly
below it and directly to the right of it in the diagram. The length of a hook is
equal to the number of boxes in the hook. If a hook is of length $e$ it is said
to be an $e$-hook.
\newline
The partition corresponding to the diagram obtained by deleting a hook and
moving everything below and to the right of the hook one box up and one box left
is said to be obtained by removing this hook. For example:

\begin{center}
$\begin{CD}
\begin{Young}
&&&&\cr
&$\bullet$&$\bullet$&$\bullet$&$\bullet$\cr
&$\bullet$&\cr
&$\bullet$\cr
\end{Young}\end{CD}\longrightarrow
\begin{CD}
$\yng(5,2,1,1)$
\end{CD}$
\end{center}

shows a $6$-hook and how to delete it.
\newline
\newline
If $\beta$ is a corresponding beta-set then an $e$-hook corresponds to a pair of
non-negative integers $(x,y)$ such that $x\in\beta$ and $y\not\in\beta$ with
$x-y=e$. Removing this hook corresponds to replacing $x$ with $y$ in $\beta$.
\newline
Similarly if $\beta$ is a beta-set and $(x,y)$ a pair of non-negative integers
with $x\in\beta$, $y\not\in\beta$ and $y-x=e$ then the beta-set obtained
obtained by replacing $x$ with $y$ is said to obtained by adding an $e$-hook to
$\beta$. The corresponding transformation to the partition is also called adding
an $e$-hook.
\newline
\newline
For the rest of the section let's fix a positive integer $e$. Beta-sets can be
displayed on an $e$-abacus as follows:
\newline
Take an abacus with $e$ columns (runners) labeled from left to right by
$0,1,\dots,e-1$ and with the rows labeled from the top by $0,1,2,\dots$. From
now on this will be known as an $e$-abacus. Now given some beta-set $\beta$ we
can represent it on the abacus by putting a bead on the $j$th runner and $i$th
row if and only if $ei+j\in\beta$.
\newline
Removing/adding an $e$-hook corresponds to moving a bead up/down a runner one
place into an unoccupied position. From this description it is clear that
removing $e$-hooks until there are no more $e$-hooks to remove gives a
well-defined beta-set called the $e$-core of $\beta$. The corresponding
partition is called the $e$-core of the partition corresponding to $\beta$.
Again this is well-defined, in other words equivalent beta-sets have equivalent cores. If
$\lambda$ is a partition and $\lambda'$ its $e$-core then the $e$-weight of
$\lambda$ is defined to be $\frac{|\lambda|-|\lambda'|}{e}$. In other words the
weight is defined to be the number of $e$-hooks removed to obtain the $e$-core.
\newline
\newline
Given a beta-set $\beta$ we can construct a beta-set $\beta^j$ by letting
$i\in\beta_j$ if and only if $ei+j\in\beta$. These beta-sets are called the
$e$-quotients of $\beta$. Then a beta-set is completely defined by the
$\beta^j$s $(0\leq j\leq e-1)$. The $e$-quotients are written as
$[\beta^0,\beta^1,\dots,\beta^{e-1}]$. If $\lambda$ is the partition
corresponding to $\beta$ then we say $[\lambda^0,\lambda^1,\dots,\lambda^{e-1}]$
are the $e$-quotients of $\lambda$ where $\lambda^i$ is the partition
corresponding to $\beta^i$. This is uniquely determined by $\lambda$ up to a cyclic
permutation of the $\lambda^i$s.
\newline
\newline
Given a symbol $\{X,Y\}$ we again have a notion of hook. An $e$-hook is a pair
$(x,y)$ of non-negative integers with $x\in X$ but $y\not\in X$ (or $X$ replaced
with $Y$) with $x-y=e$. Removing this $e$-hook means replacing $x$ with $y$ in
$X$ (or $Y$). There is also a completely analogous notion of adding hooks.
\newline
\newline
Now consider the $2e$-abacus. For integers $i$ and $j$ with $0\leq i$ and $0\leq
j\leq e-1$ put a bead on the $i$th row and $j$th runner if and only if $ei+j\in
X$ and on the $i$th row and $(e+j)$th runner if and only if $ei+j\in Y$. This is
called the $2e$-linear diagram of $\{X,Y\}$.
\newline
\newline
This diagram shows that removing $e$-hooks until no more can be removed is a
well-defined process producing what is called the $e$-core of $\{X,Y\}$. Also
equivalent symbols have equivalent $e$-cores. If when we remove all $e$-hooks
from a symbol we get a degenerate symbol and this process involved removing a
positive number of $e$-hooks we say that the $e$-core is $2$ copies of this
degenerate symbol. However, if this process involved removing no $e$-hooks then
we say the $e$-core is just $1$ copy of this degenerate symbol. From now on we
will use symbol to mean an equivalence class of symbols.

\subsection{Alternative Description}\label{not:char}

When we are looking at characters of our finite classical groups we will need
the following alternative description of partitions and symbols as given
in\cite[Section 5.2]{hiskes2000}.
\newline
\newline
Given a partition we display it on a $2$-abacus making sure the $0$th runner has
more beads than the $1$th. Say it has $s$ more beads. Then we relabel the
partition $(s,\mu,\nu)$ where $[\mu,\nu]$ are the $2$-quotient partitions with
respect to the abacus representation given.
\newline
Given a symbol $\{X,Y\}$ with $|X|>|Y|$ we relabel it $(s,\mu,\nu)$ where
$s=|X|-|Y|$, $\mu$ is the partition corresponding to $X$ and $\nu$ is the
partition corresponding to $Y$. If we have a symbol $\{X,Y\}$ with $|X|=|Y|$ we
relabel it $(0,\mu,\nu)$ where $\mu$ is the partition corresponding to $X$ and
$\nu$ is the partition corresponding to $Y$. Note that in this final case $(0,\mu,\nu)$ and $(0,\nu,\mu)$
correspond to the same symbol.

\subsection{Littlewood-Richardson Coefficients}\label{sec:LitRic}

Let $\mu$, $\nu$ and $\lambda$ be partitions such that $|\mu|=m$, $|\nu|=n$ and
$|\lambda|=(n+m)$. The Littlewood Richardson coefficient $g^\lambda_{\mu,\nu}$
is described as follows\cite[Definition 16.1]{james1978}:
\newline
Let $\mu=(\alpha_1,\alpha_2,\dots,\alpha_s)$,
$\nu=(\beta_1,\beta_2,\dots,\beta_t)$ and
$\lambda=(\gamma_1,\gamma_2,\dots,\gamma_u)$. If $s>u$ or $\alpha_i>\gamma_i$
for some $i$ with $0\leq i\leq s$ then $g^\lambda_{\mu,\nu}$ is zero.
\newline
Otherwise lie the Young diagram of $\mu$ on top of that of $\lambda$ so that the
top left boxes coincide. Now $g^\lambda_{\mu,\nu}$ is the number of ways the
complement of the Young diagram of $\mu$ inside that of $\lambda$ can be filled
with positive integers according to the following rules.

\begin{enumerate}
\item There must appear $\beta_i$ $i$'s.
\item The integers must be non-decreasing from left to right along rows and
strictly increasing down columns.
\item Reading each row from right to left starting with the top row and
continuing downwards gives a sequence of integers such that at no point does the
number of $(i+1)$s exceed the number of $i$s for all positive integers $i$.
\end{enumerate}

\section{Weyl Groups}\label{sec:weyl}

For the branching rules of the following Weyl groups we follow G. Hiss and K.
Kessar\cite[Section 3]{hiskes2000}. We will have repeated use of the
Littlewood-Richardson coefficients~\ref{sec:LitRic}.

\begin{description}
\item[$\mathbf{A_n}$] The Weyl group of type $A_n$, the symmetric group
$S_{n+1}$ on $(n+1)$ letters, has presentation:

\begin{center}
$S_{n+1}=\langle s_1,\dots,s_n|s_i^2=1, (s_is_{i+1})^3=1,(s_is_j)^2=1$ $|i-j|>1
\rangle$
\end{center}

The ordinary characters of $S_n$ are labeled by partitions of $n$. If
$\alpha\vdash n$ we use $\chi^\alpha$ to denote the corresponding character of
$S_n$.
\newline
\newline
We are concerned with inducing characters from $S_{n-k}\times S_k$, generated
by:

\begin{align*}
\{s_1,\dots,s_{n-k-1},s_{n-k+1},\dots,s_{n-1}\}
\end{align*}

to $S_n$.
\newline
\newline
If $\alpha\vdash (n-k)$, $\beta\vdash k$ and $\gamma\vdash n$ then the
multiplicity of $\chi^\gamma$ in $Ind_{S_{n-k}\times
S_k}^{S_n}(\chi^\alpha\otimes\chi^\beta)$ is $g_{\alpha,\beta}^\gamma$.

\item[$\mathbf{B_n}$] The Weyl group of type $B_n$, denoted $W_n$, has
presentation:
\begin{center}
$W_n=\langle s_1,\dots,s_n|s_i^2=1, (s_1s_2)^4=1, (s_is_{i+1})^3=1$ $i>1,
(s_is_j)^2=1$ $|i-j|>1 \rangle$
\end{center}

The ordinary characters of $W_n$ are labeled by bi-partitions of $n$, a
bi-partition of $n$ is an ordered pair of partitions $(\alpha^0,\alpha^1)$ such
that $|\alpha^0|+|\alpha^1|=n$ and we write $(\alpha^0,\alpha^1)\vdash n$. If
$(\alpha^0,\alpha^1)\vdash n$ we use $\chi^\alpha$ to denote the corresponding
character of $W_n$.
\newline
\newline
We are concerned with inducing characters from $W_{n-k}\times S_k$, generated
by:

\begin{align*}
\{s_1,..,s_{n-k},s_{n-k+2},\dots,s_n\}
\end{align*}

to $W_n$.
\newline
\newline
If $(\alpha^0,\alpha^1)\vdash (n-k)$, $\gamma\vdash k$ and
$(\beta^0,\beta^1)\vdash n$ then set $j=|\beta^0|-|\alpha^0|$. The multiplicity
of $\chi^\beta$ in $Ind_{W_{n-k}\times
S_k}^{W_n}(\chi^\alpha\otimes\chi^\gamma)$ is zero if $j<0$ or $j>k$. Otherwise
it is:

\begin{center}
$\sum_{\delta^0\vdash j}\sum_{\delta^1\vdash
k-j}g_{\alpha^0,\delta^0}^{\beta^0}g_{\alpha^1,\delta^1}^{\beta^1}g_{\delta^0,
\delta^1}^\gamma$
\end{center}

\item[$\mathbf{D_n}$] The Weyl group of type $D_n$, denoted $\widetilde{W}_n$,
has presentation:
\begin{center}
$\widetilde{W}_n=\langle s_1,\dots,s_n|s_i^2=1, (s_1s_2)^2=1, (s_1s_3)^3=1,
(s_is_{i+1})^3=1$ $i>1, (s_is_j)^2=1$ $|i-j|>1$ $\{i,j\}\neq\{1,3\} \rangle$
\end{center}

Note $\widetilde{W}_n$ can be viewed as a subgroup of $W_n$ of index 2 with
generators $\{s_1s_2s_1,s_2,\dots,s_n\}$.
\newline
\newline
If $(\alpha^0,\alpha^1)\vdash n$ with $\alpha^0\neq\alpha^1$ then the character
$\chi^{(\alpha^0,\alpha^1)}$ of $W_n$ restricts to a single character of
$\widetilde{W}_n$ and $\chi^{(\alpha^1,\alpha^0)}$ restricts to the same
character. We denote this character $\chi^{\{\alpha^0,\alpha^1\}}$ and describe
it as non-degenerate. However, if
$\alpha^0=\alpha^1$ then $\chi^{(\alpha^0,\alpha^0)}$ restricts to the sum of
$2$ distinct characters of $\widetilde{W}_n$. We denote the $2$ characters
$\chi^{\{\alpha^0,\alpha^0\}}$ and $\chi'^{\{\alpha^0,\alpha^0\}}$ and describe
them as degenerate. Now
conjugation by any element $g\in W_n\backslash\widetilde{W}_n$ induces an
automorphism on $\widetilde{W}_n$ that swaps $\chi^{\{\alpha^0,\alpha^0\}}$ and
$\chi'^{\{\alpha^0,\alpha^0\}}$.
\newline
\newline
We are concerned with inducing characters from $\widetilde{W}_{n-k}\times S_k$,
generated by:

\begin{align*}
\{s_1,\dots,s_{n-k},s_{n-k+2},\dots,s_n\}
\end{align*}
to $\widetilde{W}_n$.
\newline
\newline
Given the information above we can deduce the multiplicity of an irreducible
character of $\widetilde{W}_n$ in that of an irreducible character of
$\widetilde{W}_{n-k}\times S_k$ induced up to $\widetilde{W}_n$. We consider
three cases. When the characters of $\widetilde{W}_{n-k}$ and $\widetilde{W}_n$
are both non-degenerate, when the character of $\widetilde{W}_{n-k}$ is
non-degenerate but that of $\widetilde{W}_n$ is degenerate and when the
character of $\widetilde{W}_{n-k}$ is degenerate but that of $\widetilde{W}_n$
is non-degenerate. We consider the following commutative diagram to obtain
our results:

\begin{center}
$\begin{CD}
\widetilde{W}_{n-k}\times S_k @>Ind>> \widetilde{W}_n\\
@VVIndV @VVIndV\\
W_{n-k}\times S_k @>Ind>> W_n
\end{CD}$
\end{center}

\begin{enumerate}
\item Suppose $(\alpha^0,\alpha^1)\vdash (n-k)$ with
$\alpha^0\neq\alpha^1$ and
$(\beta^0,\beta^1)\vdash n$ with $\beta^0\neq\beta^1$. Additionally let
$\delta\vdash k$. Then the multiplicity of $\chi^{\{\beta^0,\beta^1\}}$ in
$Ind_{\widetilde{W}_{n-k}\times
S_k}^{\widetilde{W}_n}(\chi^{\{\alpha^0,\alpha^1\}}\otimes\chi^\delta)$ is equal
to the multiplicity of $\chi^{(\beta^0,\beta^1)}$ plus the multiplicity of
$\chi^{(\beta^1,\beta^0)}$ in $Ind_{W_{n-k}\times
S_k}^{W_n}(\chi^{(\alpha^0,\alpha^1)}\otimes\chi^\delta)$.
\newline
\item Suppose that $(\alpha,\beta)\vdash (n-k)$ with
$\alpha\neq\beta$ and $(\alpha,\alpha)\vdash n$.
Additionally let $\delta\vdash k$. Then the multiplicity of
$\chi^{\{\alpha,\alpha\}}$ in $Ind_{\widetilde{W}_{n-k}\times
S_k}^{\widetilde{W}_n}(\chi^{\{\alpha,\beta\}}\otimes\chi^\delta)$ is equal to
the multiplicity of $\chi^{(\alpha,\alpha)}$ in $Ind_{W_{n-k}\times
S_k}^{W_n}(\chi^{(\alpha,\beta)}\otimes\chi^\delta)$. The multiplicity of
$\chi'^{\{\alpha,\alpha\}}$ is exactly the same.
\newline
\item Suppose that $(\alpha,\alpha)\vdash (n-k)$ and
$(\alpha,\beta)\vdash n$ with $\alpha\neq\beta$.
Additionally let $\delta\vdash k$. Then the multiplicity of
$\chi^{\{\alpha,\beta\}}$ in $Ind_{\widetilde{W}_{n-k}\times
S_k}^{\widetilde{W}_n}(\chi^{\{\alpha,\alpha\}}\otimes\chi^\delta)$ is equal to
the multiplicity of $\chi^{(\alpha,\beta)}$ in $Ind_{W_{n-k}\times
S_k}^{W_n}(\chi^{(\alpha,\alpha)}\otimes\chi^\delta)$. The same statement is
true with $\chi^{\{\alpha,\alpha\}}$ replaced with $\chi'^{\{\alpha,\alpha\}}$.
\end{enumerate}
\end{description}

\section{The Brauer Homomorphism}

Let $G$ be a finite group and $M$ an $\mathcal{O}G$-module. The Brauer
homomorphism $Br_P^G(M)$ of $M$ with respect to some $p$-subgroup $P$
of $G$ is the natural surjection:

\begin{center}
$Br_P^G:M^P\rightarrow M^P/(\sum_{Q<P}Tr_Q^P(M^Q)+\mathcal{J}M^P)$
\end{center}

If $M$ is indecomposable with trivial source then $Br_P^G(M^P)\neq0$ if and only
if $M$ has a vertex containing $P$.
\newline
\newline
We consider the specific case where $M=\mathcal{O}G$ with the action of $G$
given by conjugation. In this case $Br_P^G(M^P)\cong kC_G(P)$ as
$kN_G(P)$-modules with $Br_P^G$ given by:

\begin{center}
$Br_P^G(\sum_{g\in G}\alpha_gg)=\sum_{g\in C_G(P)}\overline{\alpha_g}g$
\end{center}

Now if $H$ is a subgroup of $G$ containing $N_G(P)$ and $b$ and $c$ are block
idempotents of $\mathcal{O}G$ and $\mathcal{O}H$ respectively such that the
corresponding blocks both have defect group $P$, then $\mathcal{O}Hc$ is
described as the Brauer correspondent of $\mathcal{O}Gb$ in $H$ if
$Br_P^G(b)=Br_P^H(c)$. When $H=N_G(P)$ we describe $\mathcal{O}Hc$ simply as the
Brauer correspondent.

\section{The Finite Classical Groups}

Let $q$ be a power of a prime. We describe each of the finite classical
groups $G_m(q)$ in question in two ways. First we view $G_m(q)$ as the fixed
points of some algebraic group $\mathbf{G_m(q)}$ under a Frobenius morphism $F$
and secondly as the group of linear maps that preserve or scale some bilinear
form over $\mathbb{F}_q$. For both these descriptions we follow G. Hiss and K.
Kessar\cite[Section 4]{hiskes2000}.

\subsection{Forms}

Let $W$ be an $m$-dimensional vector space over $\mathbb{F}_{q}$. We describe
the following bilinear forms on $W$. We require $q$ to be odd for the symplectic and orthogonal forms only.

\begin{description}
\item[Unitary] To define a unitary form we require $q$ to be a square, say
$q=q_0^2$. We want a non-degenerate sesquilinear form on $W$. Up to isomorphism
there is one such form:

\begin{center}
$<u,v>=u_1v_m^{q_0}+\dots+u_mv_1^{q_0}$
\end{center}

\item[Symplectic] We want a non-degenerate anti-symmetric bilinear form on $W$.
In this case $m$ must be even. Say $m=2n$. Up to isomorphism there is one such
form:

\begin{center}
$<u,v>=(u_1v_{2n}+\dots+u_nv_{n+1})-(u_{n+1}v_n+\dots+u_{2n}v_1)$
\end{center}

\item[Orthogonal] We want a non-degenerate symmetric bilinear form on $W$. There
are two non-isomorphic such forms:

\begin{center}
$<u,v>=u_1v_m+u_2v_{m-1}+\dots+u_{m-1}v_2+u_mv_1$
\newline
\newline
and
\newline
\newline
$<u,v>=u_1v_1+\delta u_mv_m+(u_2v_{m-1}+u_3v_{m-2}+\dots+u_{m-2}v_3+u_{m-1}v_2)$
\newline
\newline
where $-\delta$ is a non-square in $\mathbb{F}_q$.
\end{center}

These two forms are known as type 1 and type -1 respectively.
\end{description}

Throughout this section if $W$ is a vector space over $K$ with some bilinear
form $g$ then we introduce the sets\cite[Section 1]{fonsri1989}:
\begin{align*}
I(W,g)&=\{x\in GL(W)|g(xu,xv)=g(u,v),\forall u,v\in W\}
\\
I_0(W,g)&=I(W,g)\cap SL(W)
\end{align*}

When $\dim(W)$ is even, say $2n$, then we define:

\begin{align*}
J(W,g)&=\{x\in GL(W)|\exists\lambda_x\in K,g(xu,xv)=\lambda_xg(u,v),\forall
u,v\in W\}
\\
J_0(W,g)&=\{x\in J(W,g)|\det(x)=\lambda_x^n\}
\end{align*}

\subsection{Description of Groups}

First we fix an algebraic closure $\overline{\mathbb{F}_q}$ of $\mathbb{F}_q$
and introduce the automorphism:

\begin{align*}
GL_n(\overline{\mathbb{F}_q})&\rightarrow GL_n(\overline{\mathbb{F}_q})\\
M&\mapsto M^{[q]}
\end{align*}

Where $M^{[q]}$ is obtained from the matrix $M$ by raising all its entries to
the power $q$. In all cases except where stated $F$ will be the above
automorphism.
\newline
\newline
When $q$ is odd we fix a non-square $\delta\in\mathbb{F}_q$. We will use $J_n$ to denote the
$n\times n$ matrix with $1$s along the antidiagonal and $0$s elsewhere,
$J'_{2n}$ the matrix
$\left(\begin{array}{cc}
  0  & J_n \\
-J_n & 0   \\
\end{array}\right)$ and $J''_{2n}$ the matrix
$\left(\begin{array}{cccc}
 0 & 0 & 0 & J_{n-1} \\
 0 & 1 & 0 & 0 \\
 0 & 0 & \delta & 0 \\
 J_{n-1} & 0 & 0 & 0 \\
\end{array}\right)$.
\newline
\newline
Let $\mathbf{V}$ be an $m$-dimensional vector space over
$\overline{\mathbb{F}_q}$ with basis $\{e_1,\dots,e_m\}$ and $V$ the
$\mathbb{F}_q$-vector space given by the $\mathbb{F}_q$-span of
$\{e_1,\dots,e_m\}$. $f$ will be a bilinear form on $V$ but we can also consider
it as a bilinear form $\mathbf{f}$ on $\mathbf{V}$ in a natural way.
\newline
\newline
We allow $q$ to be even in our description of $GL_n(q)$ and $U_n(q)$ but assume $q$ is odd for all the other groups.
This convention will hold throughout this paper.
\newline
\begin{description}
\item[$\mathbf{GL_n(q)}$] Let $\mathbf{G}=GL_n(\overline{\mathbb{F}_q})$. Then
$G=GL_n(q)=\mathbf{G}^F$.
\newline
$GL_n(q)$ has order $q^{\frac{n(n-1)}{2}}\prod_{i=1}^n(q^i-1)$.
\newline
\item[$\mathbf{U_n(q)}$] Here we require $q$ to be a square, say $q=q_0^2$. We
set $\mathbf{G}=GL_n(\overline{\mathbb{F}_q})$ and set $F$ to be the
automorphism:

\begin{align*}
F: M\mapsto J_nM^{-t[q_0]}J_n
\end{align*}

Then $G=U_n(q)=\mathbf{G}^F$.
\newline
\newline
Alternatively if $m=n$ and

\begin{align*}
f(u,v)=u_1v_n^{q_0}+\dots+u_nv_1^{q_0}
\end{align*}

then $U_n(q)=I(V,f)=I(\mathbf{V},\mathbf{f})$.
\newline
\newline
$U_n(q)$ has order $q_0^{\frac{n(n-1)}{2}}\prod_{i=1}^n(q_0^i-(-1)^i)$\cite[Section 2.6]{wall1962}.
\newline
\item[$\mathbf{Sp_{2n}(q)}$, $\mathbf{CSp_{2n}(q)}$]
\begin{align*}
Sp_{2n}(\overline{\mathbb{F}_q})&=\{x\in
GL_{2n}(\overline{\mathbb{F}_q})|x^tJ'_{2n}x=J'_{2n}\}
\\
CSp_{2n}(\overline{\mathbb{F}_q})&=\{x\in
GL_{2n}(\overline{\mathbb{F}_q})|x^tJ'_{2n}x=\lambda_xJ'_{2n},
\lambda_x\in\overline{\mathbb{F}_q}\}
\end{align*}

For $\mathbf{G}=Sp_{2n}(\overline{\mathbb{F}_q})$,
$CSp_{2n}(\overline{\mathbb{F}_q})$ we have $G=\mathbf{G}^F=Sp_{2n}(q)$,
$CSp_{2n}(q)$ respectively.
\newline
\newline
Alternatively if $m=2n$ and

\begin{align*}
f(u,v)=(u_1v_{2n}+\dots+u_nv_{n+1})-(u_{n+1}v_n+\dots+u_{2n}v_1)
\end{align*}

then

\begin{align*}
Sp_{2n}(q)&=I(V,f) & Sp_{2n}(\overline{\mathbb{F}_q})&=I(\mathbf{V},\mathbf{f})
\\
CSp_{2n}(q)&=J(V,f) &
CSp_{2n}(\overline{\mathbb{F}_q})&=J(\mathbf{V},\mathbf{f})
\end{align*}
$CSp_{2n}(q)$ has order $q^{n^2}(q-1)\prod_{i=1}^n(q^{2i}-1)$\cite[Section
2.6]{wall1962}.
\newline
\item[$\mathbf{O_{2n+1}(q)}$, $\mathbf{SO_{2n+1}(q)}$, $\mathbf{CO_{2n+1}(q)}$,
$\mathbf{CSO_{2n+1}(q)}$]
\begin{align*}
O_{2n+1}(\overline{\mathbb{F}_q})&=\{x\in
GL_{2n+1}(\overline{\mathbb{F}_q})|x^tJ_{2n+1}x=J_{2n+1}\}
\\
SO_{2n+1}(\overline{\mathbb{F}_q})&=O_{2n+1}(\overline{\mathbb{F}_q})\cap
SL_{2n+1}(\overline{\mathbb{F}_q})
\end{align*}

For $\mathbf{G}=O_{2n+1}(\overline{\mathbb{F}_q})$,
$SO_{2n+1}(\overline{\mathbb{F}_q})$ we have $G=\mathbf{G}^F=O_{2n+1}(q)$,
$SO_{2n+1}(q)$ respectively.
\newline
\newline
Alternatively if $m=2n+1$ and

\begin{align*}
f(u,v)=u_1v_{2n+1}+\dots+u_{2n+1}v_1
\end{align*}

then

\begin{align*}
O_{2n+1}(q)&=I(V,f) &
O_{2n+1}(\overline{\mathbb{F}_q})&=I(\mathbf{V},\mathbf{f})
\\
SO_{2n+1}(q)&=I_0(V,f) &
SO_{2n+1}(\overline{\mathbb{F}_q})&=I_0(\mathbf{V},\mathbf{f})
\end{align*}

$SO_{2n+1}(q)$ has order $q^{n^2}\prod_{i=1}^n(q^{2i}-1)$\cite[Section
2.6]{wall1962}.
\newline
\newline
We note that when $\dim(V)$ is odd although there are two non-isomorphic non-degenerate orthogonal
forms on $V$ they produce isomorphic groups.
\newline
\item[$\mathbf{O^+_{2n}(q)}$, $\mathbf{SO^+_{2n}(q)}$, $\mathbf{CO^+_{2n}(q)}$, $\mathbf{CSO^+_{2n}(q)}$]
\begin{align*}
O^+_{2n}(\overline{\mathbb{F}_q})&=\{x\in
GL_{2n}(\overline{\mathbb{F}_q})|x^tJ_{2n}x=J_{2n}\}
\\
SO^+_{2n}(\overline{\mathbb{F}_q})&=O^+_{2n}(\overline{\mathbb{F}_q})\cap
SL_{2n}(\overline{\mathbb{F}_q})
\end{align*}

For $\mathbf{G}=O^+_{2n}(\overline{\mathbb{F}_q})$,
$SO^+_{2n}(\overline{\mathbb{F}_q})$ we have $G=\mathbf{G}^F=O^+_{2n}(q)$,
$SO^+_{2n}(q)$ respectively.

\begin{align*}
CO^+_{2n}(\overline{\mathbb{F}_q})&=\{x\in
GL_{2n}(\overline{\mathbb{F}_q})|x^tJ_{2n}x=\lambda_xJ_{2n},
\lambda_x\in\overline{\mathbb{F}_q}\}
\\
CSO^+_{2n}(\overline{\mathbb{F}_q})&=\{x\in
CO^+_{2n}(\overline{\mathbb{F}_q})|\lambda_x^n=det(x)\}
\end{align*}

For $\mathbf{G}=CO^+_{2n}(\overline{\mathbb{F}_q})$,
$CSO^+_{2n}(\overline{\mathbb{F}_q})$ we have $G=\mathbf{G}^F=CO^+_{2n}(q)$,
$CSO^+_{2n}(q)$ respectively.
\newline
\newline
Alternatively if $m=2n$ and

\begin{align*}
f(u,v)=u_1v_{2n}+\dots+u_{2n}v_1
\end{align*}

then

\begin{align*}
O^+_{2n}(q)&=I(V,f) &
O^+_{2n}(\overline{\mathbb{F}_q})&=I(\mathbf{V},\mathbf{f})
\\
SO^+_{2n}(q)&=I_0(V,f) &
SO^+_{2n}(\overline{\mathbb{F}_q})&=I_0(\mathbf{V},\mathbf{f})
\\
CO^+_{2n}(q)&=J(V,f) &
CO^+_{2n}(\overline{\mathbb{F}_q})&=J(\mathbf{V},\mathbf{f})
\\
CSO^+_{2n}(q)&=J_0(V,f) &
CSO^+_{2n}(\overline{\mathbb{F}_q})&=J_0(\mathbf{V},\mathbf{f})
\end{align*}

$SO_{2n}^+(q)$ has order
$q^{n(n-1)}(q^n-1)\prod_{i=1}^{n-1}(q^{2i}-1)$\cite[Section 2.6]{wall1962}.
\newline
$CSO_{2n}^+(q)$ has order
$q^{n(n-1)}(q^n-1)(q-1)\prod_{i=1}^{n-1}(q^{2i}-1)$\cite[Section 2.6]{wall1962}.
\newline
\item[$\mathbf{O^-_{2n}(q)}$, $\mathbf{SO^-_{2n}(q)}$, $\mathbf{CO^-_{2n}(q)}$,
$\mathbf{CSO^-_{2n}(q)}$]
\begin{align*}
O^-_{2n}(\overline{\mathbb{F}_q})&=\{x\in
GL_{2n}(\overline{\mathbb{F}_q})|x^tJ''_{2n}x=J''_{2n}\}
\\
SO^-_{2n}(\overline{\mathbb{F}_q})&=O^-_{2n}(\overline{\mathbb{F}_q})\cap
SL_{2n}(\overline{\mathbb{F}_q})
\end{align*}

For $\mathbf{G}=O^-_{2n}(\overline{\mathbb{F}_q})$,
$SO^-_{2n}(\overline{\mathbb{F}_q})$ we have $G=\mathbf{G}^F=O^-_{2n}(q)$,
$SO^-_{2n}(q)$ respectively.

\begin{align*}
CO^-_{2n}(\overline{\mathbb{F}_q})&=\{x\in
GL_{2n}(\overline{\mathbb{F}_q})|x^tJ''_{2n}x=\lambda_xJ''_{2n},
\lambda_x\in\overline{\mathbb{F}_q}\}
\\
CSO^-_{2n}(\overline{\mathbb{F}_q})&=\{x\in
CO^-_{2n}(\overline{\mathbb{F}_q})|\lambda_x^n=det(x)\}
\end{align*}

For $\mathbf{G}=CO^-_{2n}(\overline{\mathbb{F}_q})$,
$CSO^-_{2n}(\overline{\mathbb{F}_q})$ we have $G=\mathbf{G}^F=CO^-_{2n}(q)$,
$CSO^-_{2n}(q)$ respectively.
\newline
\newline
Alternatively if $m=2n$ and

\begin{align*}
f(u,v)=u_1v_{2n}+\dots+u_nv_n+\delta u_{n+1}v_{n+1}+\dots+u_{2n}v_1
\end{align*}

then

\begin{align*}
O^-_{2n}(q)&=I(V,f) &
O^-_{2n}(\overline{\mathbb{F}_q})&=I(\mathbf{V},\mathbf{f})
\\
SO^-_{2n}(q)&=I_0(V,f) &
SO^-_{2n}(\overline{\mathbb{F}_q})&=I_0(\mathbf{V},\mathbf{f})
\\
CO^-_{2n}(q)&=J(V,f) &
CO^-_{2n}(\overline{\mathbb{F}_q})&=J(\mathbf{V},\mathbf{f})
\\
CSO^-_{2n}(q)&=J_0(V,f) &
CSO^-_{2n}(\overline{\mathbb{F}_q})&=J_0(\mathbf{V},\mathbf{f})
\end{align*}

$SO_{2n}^-(q)$ has order
$q^{n(n-1)}(q^n+1)\prod_{i=1}^{n-1}(q^{2i}-1)$\cite[Section 2.6]{wall1962}.
\newline
$CSO_{2n}^-(q)$ has order
$q^{n(n-1)}(q^n+1)(q-1)\prod_{i=1}^{n-1}(q^{2i}-1)$\cite[Section 2.6]{wall1962}.
\end{description}

\subsection{Clifford Groups}\label{sec:cli}

For our description of Clifford groups we follow P. Fong and B.
Srinivasan\cite[Section 2]{fonsri1989}. Let $V$ be a vector space over $K$ of
finite dimension greater than $1$ endowed with a non-degenerate quadratic form
$Q$. Let $<,>$ be the corresponding symmetric bilinear form. The Clifford
algebra $\mathcal{C}(V)$ is the $K$-algebra generated by $V$ subject to all the
linear relations in $V$ as well as the additional condition:

\begin{center}
$v^2=Q(v)1$ for all $v\in V$
\end{center}

$\mathcal{C}(V)$ has a $\mathbb{Z}_2$-grading given by demanding all the
non-zero $v\in V$ are odd. We use $\mathcal{C}_+(V)$ to denote the even part.
Now define the Clifford group $Cl(V)$ and special Clifford group $Cl_+(V)$ as
follows:

\begin{align*}
Cl(V)=&\{x\in\mathcal{C}(V)^\times|xVx^{-1}=V\}
\\
Cl_+(V)=&\{x\in\mathcal{C}_+(V)^\times|xVx^{-1}=V\}
\end{align*}

Now if $x\in Cl(V)$ and $v\in V$ then:

\begin{align*}
Q(xvx^{-1})=(xvx^{-1})^2=xv^2x^{-1}=xQ(v)x^{-1}=Q(v)
\end{align*}

Therefore conjugation by an element in $Cl(V)$ preserves $<,>$ on $V$. This
gives us the map:

\begin{align*}
Cl(V)\rightarrow O(V)
\end{align*}

where $O(V)$ is the group of linear maps on $V$ that preserve $<,>$.
\newline
\newline
When we restrict to $Cl_+(V)$ we get the short exact sequence:

\begin{center}
$\begin{CD}
0 @> >> K^\times @> >> Cl_+(V) @>\pi>> SO(V)\longrightarrow0
\end{CD}$
\end{center}

Let $q$ be the power of an odd prime.
We use $Cl_{2n+1}(q)$ (respectively $Cl^+_{2n}(q)$, $Cl^-_{2n}(q)$) to denote
the special Clifford groups defined over $\mathbb{F}_q$ with respect to the
bilinear form given by the Gram matrix $J_{2n+1}$ (respectively $J_{2n}$,
$J''_{2n}$).
\newline
\newline
Similarly we can define $Cl_{2n+1}(\overline{\mathbb{F}_q})$,
$Cl^+_{2n}(\overline{\mathbb{F}_q})$ and $Cl^-_{2n}(\overline{\mathbb{F}_q})$
over $\overline{\mathbb{F}_q}$.
\newline
\newline
If $\mathbf{V}$ is the $\overline{\mathbb{F}_q}$-vector space with basis
$\{e_1\dots e_m\}$ then we have an $\mathbb{F}_q$-vector space automorphism of
$\mathbf{V}$ given by:

\begin{align*}
\sum \alpha_i e_i\mapsto\sum \alpha^q_i e_i
\end{align*}

This extends to an automorphism $F$ of $Cl_{2n+1}(\overline{\mathbb{F}_q})$
(respectively $Cl^+_{2n}(\overline{\mathbb{F}_q})$,
$Cl^+_{2n}(\overline{\mathbb{F}_q})$) (where $\{e_1\dots e_m\}$ is the basis
with respect to which the Gram matrices are taken). Then we have:

\begin{align*}
Cl_{2n+1}(q)&=Cl_{2n+1}(\overline{\mathbb{F}_q})^F
\\
Cl^+_{2n}(q)&=Cl^+_{2n}(\overline{\mathbb{F}_q})^F
\\
Cl^-_{2n}(q)&=Cl^-_{2n}(\overline{\mathbb{F}_q})^F
\end{align*}

\subsection{Levi Subgroups}\label{sec:levi}
We will now describe certain subgroups of the finite classical and Clifford
groups described above as in\cite[Section 4.4]{hiskes2000}.

\begin{description}
\item[$\mathbf{GL_n(q)}$] If we have positive integers $(n_1,n_2,\dots,n_t)$
with $\sum_in_i=n$ then
$GL_{n_1}(q)\times\dots\times GL_{n_t}(q)$ embeds in $GL_n(q)$ in the natural
way:
\\
\begin{align*}
GL_{n_1}(q)\times\dots\times GL_{n_t}(q)&\hookrightarrow GL_n(q)\\
A_1\times\dots\times A_t&\mapsto \left(\begin{array}{ccc}
 A_1 & & \\
 & \ddots & \\
 & & A_t
\end{array}\right)
\end{align*}
\newline
\newline
We will use $L_{(n_1,\dots,n_t)}(q)$ to denote this Levi subgroup.
\newline
\item[$\mathbf{U_n(q)}$, $\mathbf{Sp_{2n}(q)}$, $\mathbf{CSp_{2n}(q)}$,
$\mathbf{SO_{2n+1}(q)}$, $\mathbf{SO^\pm_{2n}(q)}$, $\mathbf{CSO^\pm_{2n}(q)}$]
Let $G_m(q)$ be one of the above groups and $(n_1,n_2,\dots,n_t)$ positive
integers with $2s<m$ where $s=\sum_in_i$, then $GL_{n_1}(q)\times\dots\times
GL_{n_t}(q)\times G_{m-2s}(q)$ embeds in $G_m(q)$ via the map:
\\
\begin{align*}
GL_{n_1}(q)\times\dots\times GL_{n_t}(q)\times G_{m-2s}(q)&\hookrightarrow
G_m(q)\\
A_1\times\dots\times A_t\times B&\mapsto \left(\begin{array}{ccccccc}
 A_1 & & & & & & \\
 & \ddots & & & & & \\
 & & A_t & & & & \\
 & & & B & & & \\
 & & & & A_t' & & \\
 & & & & & \ddots & \\
 & & & & & & A_1' \\
\end{array}\right)
\end{align*}

where the $A_i'$ are chosen to stay in $G_m(q)$. So
$A_i'=J_{n_i}A_i^{-t[q_0]}J_{n_i}$ for the unitary group and
$A_i'=\lambda_BJ_{n_i}A_i^{-t[q]}J_{n_i}$ in all other cases. Note that
$\lambda_B$ is only needed for the conformal groups and we set $\lambda_B=1$ for
all the other groups. We will use $L_{m,(n_1,\dots,n_t)}(q)$ to denote this Levi subgroup of $G_m(q)$.
\newline
\item[$\mathbf{Cl_{2n+1}(q)}$, $\mathbf{Cl^\pm_{2n}(q)}$] Let $Cl^{(\pm)}_m(q)$
denote one of the above groups and $SO^{(\pm)}_m(q)$ the corresponding special
orthogonal group giving us the exact sequence.

\begin{align*}
0\longrightarrow\mathbb{F}_q\longrightarrow Cl^{(\pm)}_m(q)&\longrightarrow
SO^{(\pm)}_m(q)\longrightarrow0\\
\end{align*}

Then if we have positive integers $(n_1,n_2,\dots,n_t)$ with $2s<m$ where
$\sum_in_i=s$ then we have $L_{m,(n_1,\dots,n_t)}(q)\cong
GL_{n_1}(q)\times\dots\times GL_{n_t}(q)\times SO^{(\pm)}_{m-2s}(q)$ is a Levi
subgroup of $SO^{(\pm)}_m(q)$ as described above. The pre-image of
$L_{m,(n_1,\dots,n_t)}(q)$ in $Cl^{(\pm)}_m(q)$ is isomorphic to
$GL_{n_1}(q)\times\dots\times GL_{n_t}(q)\times Cl^{(\pm)}_{m-2s}(q)$ and we
also denote it by $L_{m,(n_1,\dots,n_t)}(q)$.
\end{description}

\section{Representation Theory of the Finite Classical Groups}

\subsection{Semisimple elements}\label{sec:semi}

Semisimple elements will be important when we describe the ordinary characters
of the finite classical groups. By a semisimple element we mean a matrix whose
minimal polynomial has no repeated roots. Equivalently they are the matrices
that are diagonalisable over some field extension. If $\Gamma$ is a polynomial
let $d_\Gamma$ denote the degree of $\Gamma$. Finally we let $\Delta$ be the set
of monic irreducible polynomials over $\mathbb{F}_q$ with non-zero roots. We
follow P. Fong and B. Srinivasen in\cite[Section 1]{fonsri1982} and\cite[Section
1]{fonsri1989} for our description of semisimple elements.
\newline
\begin{description}
\item[$\mathbf{GL_n(q)}$] A conjugacy class of semisimple elements in $GL_n(q)$
is completely defined by
their common characteristic polynomial. If $\Gamma\in\Delta$ let $m_\Gamma(s)$
be the multiplicity of $\Gamma$ in the characteristic polynomial of $s$. So we
have the function

\begin{align*}
\Delta &\rightarrow \mathbb{N}_0\\
\Gamma &\mapsto m_\Gamma(s)
\end{align*}

with $\sum_{\Gamma\in\Delta} m_\Gamma(s).d_\Gamma = n$. Conversely any such
function uniquely defines a semisimple conjugacy class in $GL_n(q)$.
\newline
\item[$\mathbf{U_n(q)}$] A conjugacy class of semisimple elements in $U_n(q)$ is
also completely defined
by their common characteristic polynomial. However, if $s\in U_n(q)$ and
$\omega$ is a root of the characteristic polynomial of $s$ then $\omega^{-q_0}$
is also a root occurring with the same multiplicity. With that in mind we define
the following involution on $\Delta$:

\begin{align*}
\Gamma=&(X^m+a_{m-1}X^{m-1}+\dots+a_1X+a_0)\mapsto\\
\overline{\Gamma}=&{a_0}^{-q_0}({a_0}^{q_0}X^m+{a_1}^{q_0}X^{m-1}+\dots+{a_{m-1}
}^{q_0}X+1)
\end{align*}

So $\overline{\Gamma}$ is the unique monic polynomial with roots those of
$\Gamma$ raised to the power $-q_0$.

\begin{align*}
\Lambda_1=&\{\Gamma|\Gamma\in\Delta, \overline{\Gamma}=\Gamma\}\\
\Lambda_2=&\{\Gamma\overline{\Gamma}|\Gamma\in\Delta,
\overline{\Gamma}\neq\Gamma\}\\
\Lambda=&\Lambda_1\cup\Lambda_2
\end{align*}

If $\Gamma\in\Lambda$ let $m_\Gamma(s)$ be the multiplicity of $\Gamma$ in the
characteristic polynomial of $s$. So we have the function

\begin{align*}
\Lambda &\rightarrow \mathbb{N}_0\\
\Gamma &\mapsto m_\Gamma(s)
\end{align*}

with $\sum_{\Gamma\in\Lambda} m_\Gamma(s).d_\Gamma=n$. Conversely any such
function uniquely defines a semisimple conjugacy class in $U_n(q)$.
\newline
\item[$\mathbf{Sp_{2n}(q)}$] A conjugacy class of semisimple elements in
$Sp_{2n}(q)$ is also completely defined by their common characteristic
polynomial. However, if $s\in Sp_{2n}(q)$ and $\omega$ is a root of the
characteristic polynomial of $s$ then $\omega^{-1}$ is also a root occurring
with the same multiplicity. With that in mind we define the following involution
on $\Delta$:

\begin{align*}
\Gamma=&(X^m+a_{m-1}X^{m-1}+\dots+a_1X+a_0)\mapsto\\
\widetilde{\Gamma}=&{a_0}^{-1}(a_0X^m+a_1X^{m-1}+\dots+a_{m-1}X+1)
\end{align*}

So $\widetilde{\Gamma}$ is the unique monic polynomial with roots the inverses
of those of $\Gamma$. Next we define the sets:

\begin{align*}
\Phi_0=&\{X-1,X+1\}\\
\Phi_1=&\{\Gamma|\Gamma\in\Delta\backslash\Phi_0, \widetilde{\Gamma}=\Gamma\}\\
\Phi_2=&\{\Gamma\widetilde{\Gamma}|\Gamma\in\Delta,
\widetilde{\Gamma}\neq\Gamma\}\\
\Phi=&\Phi_0\cup\Phi_1\cup\Phi_2
\end{align*}

If $\Gamma\in\Phi$ let $m_\Gamma(s)$ be the multiplicity of $\Gamma$ in the
characteristic polynomial of $s$. So we have the function

\begin{align*}
\Phi &\rightarrow \mathbb{N}_0\\
\Gamma &\mapsto m_\Gamma(s)
\end{align*}

with $m_{X+1}(s)$ even and $\sum_{\Gamma\in\Phi} m_\Gamma(s).d_\Gamma = 2n$.
Conversely any such function uniquely defines a semisimple conjugacy class in
$Sp_{2n}(q)$.
\newline
\item[$\mathbf{SO^{(\pm)}_m(q)}$] Let $SO^{(\pm)}_m(q)$ be any one of our
special orthogonal groups. Once again if
$s\in SO^{(\pm)}_m(q)$ and $\omega$ is a root of the characteristic polynomial
of $s$ then $\omega^{-1}$ is also a root occurring with the same multiplicity.
So again we have the function:

\begin{align*}
\Phi &\rightarrow \mathbb{N}_0\\
\Gamma &\mapsto m_\Gamma(s)
\end{align*}

with $\sum_{\Gamma\in\Phi} m_\Gamma(s).d_\Gamma = m$ and $m_{X+1}(s)$ even. This
does not uniquely determine a conjugacy class however. If both $m_{X-1}(s)$ and
$m_{X+1}(s)$ are non-zero this function defines $2$ conjugacy classes, otherwise
we just get $1$ conjugacy class. Later on the semisimple elements we consider
will all have $m_{X+1}(s)=0$.
\newline
\item[$\mathbf{Cl^{(\pm)}_m(q)}$]\cite[Section 2]{fonsri1989} We have the
surjection described in~\ref{sec:cli}:

\begin{center}
$\pi:Cl^{(\pm)}_m(q)\twoheadrightarrow SO^{(\pm)}_m(q)$
\end{center}

An element of $Cl^{(\pm)}_m(q)$ is described as semisimple if its image in
$SO^{(\pm)}_m(q)$ is semisimple. Let $\pi(t)=s\in SO^{(\pm)}_m(q)$ be semisimple
and $C$ its conjugacy class in $SO^{(\pm)}_m(q)$. If $m_{X-1}(s)$ and
$m_{X+1}(s)$ are both non-zero then $\pi^{-1}(C)$ is the union of
$\frac{q-1}{2}$ conjugacy classes in $Cl^{(\pm)}_m(q)$ and $t$ is conjugate to
$-t$. Otherwise $\pi^{-1}(C)$ is the union of $q-1$ conjugacy classes in
$Cl^{(\pm)}_m(q)$ and no $2$ distinct pre-images of $s$ are conjugate.
\newline
From now on we will use $m_\Gamma(t)$ to denote $m_\Gamma(s)$.
\end{description}

For any semisimple element $s$ we have a corresponding decomposition of
$V=\mathbb{F}_q^m$. For each $\Gamma\in\Phi$ (or $\Delta$ for $GL_n(q)$ or
$\Lambda$ for $U_n(q)$) we define $V_\Gamma$ to be the null space of $\Gamma(s)$
(or $\Gamma(\pi(s))$ in the case of the special Clifford groups).

\begin{align*}
V=\oplus_\Gamma V_\Gamma
\end{align*}

is then an orthogonal decomposition of $V$.

\subsection{Characters of Finite Classical groups}\label{sec:char}

We will describe labels for the ordinary irreducible characters of some of the
finite classical groups. We follow P. Fong and B. Srinivasan in\cite[Section
1]{fonsri1982} and\cite[Section 2]{fonsri1989} for the description of the
characters. We leave out $Sp_{2n}(q)$ and $SO^\pm(q)$, the appropriate characters
for these groups will be described in section~\ref{sec:spso}. First of all we
will describe a subset of all the ordinary characters called the unipotent
characters.
\newline
\newline
The unipotent characters of $GL_n(q)$ and $U_n(q)$ are both labeled by
partitions of $n$ (for example $(n)$ corresponds to the trivial character of
both groups).
\newline
\newline
The unipotent characters of $CSp_{2n}(q)$ and $SO_{2n+1}(q)$ are both labeled by
symbols with rank $n$ and odd defect.
\newline
\newline
The unipotent characters of $CSO^+_{2n}(q)$ are labeled by symbols with rank $n$
and defect $\equiv0$ (mod $4$) with the added rule that degenerate symbols label
$2$ characters.
\newline
\newline
The unipotent characters of $CSO^-_{2n}(q)$ are labeled by symbols with rank $n$
and defect $\equiv2$ (mod $4$).
\newline
\newline
Consider the following table:

\begin{center}
\begin{tabular}{| c | c |} \hline
$G$ & $G^*$ \\ \hline
$GL_n(q)$ & $GL_n(q)$ \\
$U_n(q)$ & $U_n(q)$ \\
$SO_{2n+1}(q)$ & $Sp_{2n}(q)$ \\
$CSp_{2n}(q)$ & $Cl_{2n+1}(q)$ \\
$CSO^+_{2n}(q)$ & $Cl^+_{2n}(q)$ \\
$CSO^-_{2n}(q)$ & $Cl^-_{2n}(q)$ \\ \hline
\end{tabular}
\end{center}

Let $s\in G^*$ be semisimple with corresponding decomposition $V=\oplus_\Gamma
V_\Gamma$. We define $\Psi_\Gamma(s)$ as follows:
\newline
If $G^*$ is a general linear or unitary group then $\Psi_\Gamma(s)=\{$partitions
of $m_\Gamma(s)\}$.
\newline
Now we assume that $G^*$ is not a general linear or unitary group.
\newline
For $\Gamma\in\Phi_0$ let $\Psi_\Gamma(s)=\{$symbols of rank $\lfloor
\frac{m_\Gamma(s)}{2}\rfloor\}$ subject to the following conditions:

\begin{enumerate}
\item If the form induced on $V_\Gamma$ is symplectic or orthogonal of odd
dimension, then the symbols have odd defect.
\item If the form induced on $V_\Gamma$ is orthogonal of even dimension and type
1, then the symbols have defect $\equiv0$ (mod $4$). Moreover, degenerate
symbols are counted twice. If $\lambda$ is such a degenerate symbol then we say
$\lambda,\lambda'\in\Psi_\Gamma(s)$.
\item If the form induced on $V_\Gamma$ is orthogonal of even dimension and type
-1, then the symbols have $\equiv2$ (mod $4$).
\end{enumerate}

For $\Gamma\in\Phi_1\cup\Phi_2$ let $\Psi_\Gamma(s)=\{$partitions of
$m_\Gamma(s)\}$.
\newline
Now we set $\Psi(s)=\prod_\Gamma\Psi_\Gamma(s)$.
\newline
\newline
The characters of $G$ are labeled by a semisimple element $s$ of $G^*$ together
with a $\lambda\in\Psi(s)$. We denote this character $\chi_{s,\lambda}$.
$\chi_{s,\lambda}$ and $\chi_{t,\mu}$ represent the same character if and only
if $s$ is conjugate to $t$ in $G^*$ and $\lambda=\mu$.
\newline
\newline
Now if $s$ does not have $-1$ as an eigenvalue then we can label
$\chi_{s,\lambda}$ by a unipotent character of $C_{G^*}(s)^*$\cite[Section
1]{fonsri1982}, \cite[Section 4]{fonsri1989}.
\newline
We will now describe $C_{G^*}(s)$ for all relevant $G$ to make this labeling clear.

\begin{description}
\item[$\mathbf{GL_n(q)}$] Let $G^*=GL_n(q)$ and let $s\in G^*$ be semisimple.
Then:
\begin{gather*}
C_{G^*}(s)\cong\prod_{\Gamma\in\Delta}GL_{m_\Gamma(s)}(q^{d_\Gamma})
\end{gather*}

\item[$\mathbf{U_n(q)}$] Let $G^*=U_n(q)$ and let $s\in G^*$ be semisimple.
Then:

\begin{gather*}
C_{G^*}(s)\cong (\prod_{\Gamma\in
\Phi_1}U_{m_\Gamma(s)}(q^{d_\Gamma}))\times(\prod_{\Gamma\in
\Phi_2}GL_{m_\Gamma(s)}(q^\frac{d_\Gamma}{2}))
\end{gather*}

\item[$\mathbf{Sp_{2n}(q)}$, $\mathbf{Cl_{2n+1}(q)}$, $\mathbf{Cl^\pm_{2n}(q)}$]
Let $G^*=G_m(q)$ be one of the above groups. Let $s\in G^*$ be semisimple.

\begin{gather*}
C_{G^*}(s)\cong G_{m_{X-1}(s)}(q)\times(\prod_{\Gamma\in
\Phi_1}U_{m_\Gamma(s)}(q^{d_\Gamma}))\times(\prod_{\Gamma\in
\Phi_2}GL_{m_\Gamma(s)}(q^{d_\Gamma}))
\end{gather*}
\end{description}

\section{Harish-Chandra Induction}\label{sec:har}

\subsection{Harish-Chandra Induction}

Let $p$ be an odd prime not dividing $q$ and $(K,\mathcal{O},k)$ a $p$-modular
system as introduced in the introduction~\ref{intro}. Let $G=G_m(q)$ be
$GL_n(q)$, $U_n(q)$, $Sp_{2n}(q)$, $CSp_{2n}(q)$, $SO_{2n+1}(q)$,
$SO^+_{2n}(q)$, $SO^-_{2n}(q)$, $CSO^+_{2n}(q)$ or $CSO^-_{2n}(q)$ and
$L=L_{m,(n_1,\dots,n_t)}(q)$ (or $L=L_{(n_1,\dots,n_t)}(q)$ in the case of the
general linear group) as described in section~\ref{sec:levi}. We describe a
functor
$HCInd_L^G$ from $\mathcal{O}L$-mod to $\mathcal{O}G$-mod\cite[Example
4.6(iii)]{digmic1991}.
\newline
\newline
We set $U$ to be the subgroup of $G$ consisting of matrices of the form:

\begin{gather*}
\left(\begin{array}{ccc}
 I_{n_1} & \dots & *\\
 & \ddots & \vdots\\
 & & I_{n_t}\\
\end{array}\right)
\end{gather*}

for the general linear group and

\begin{gather*}
\left(\begin{array}{ccccccc}
 I_{n_1} & \dots & * & * & * & \dots & * \\
 & \ddots & \vdots & \vdots & \vdots & \ddots & \vdots \\
 & & I_{n_t} & * & * & \dots & * \\
 & & & I_{m-2s} & * & \dots & * \\
 & & & & I_{n_t} & \dots & * \\
 & & & & & \ddots & \vdots \\
 & & & & & & I_{n_1} \\
\end{array}\right)
\end{gather*}

otherwise.
\newline
\newline
Now in all cases $U$ has $p'$ order. In other words $|U|$ is invertible in
$\mathcal{O}$. Let $U^+$ be the idempotent $\frac{1}{|U|}\sum_{u\in U}u$ in
$\mathcal{O}G$. $L$ normalises $U$ and so commutes with $U^+$ and so
$\mathcal{O}GU^+$ is an $(\mathcal{O}G$,$\mathcal{O}L)$-bimodule. Now set
$HCInd_L^G$ to be the functor $\mathcal{O}GU^+\otimes_{\mathcal{O}L}-$.
\newline
\newline
If $b$ is a central idempotent of $\mathcal{O}G$ and $c$ a central idempotent of
$\mathcal{O}L$. We
will want the functor $\mathcal{O}GbU^+\otimes_{\mathcal{O}Lc}-$ from
$\mathcal{O}Lc$-mod to $\mathcal{O}Gb$-mod. We denote this functor
$HCInd_{L,c}^{G,b}$.

\subsection{Characters under Harish-Chandra Induction}

Harish-Chandra induction is a functor from $\mathcal{O}L$-mod to
$\mathcal{O}G$-mod. However, we want to know what happens at the level of
characters viewing Harish-Chandra induction as a functor from $KL$-mod to
$KG$-mod. Let $R_L^G$ denote Harish-Chandra induction on characters. Again if
$b$ is a central idempotent of $\mathcal{O}G$ and $c$ a central idempotent of
$\mathcal{O}L$ then we denote by:

\begin{gather*}
R_{L,c}^{G,b}:Irr(\mathcal{O}Lc)\rightarrow Irr(\mathcal{O}Gb)
\end{gather*}

the corresponding function on characters.
\newline
\newline
In this section we only consider $L=L_{m,(k)}(q)$ (or $L=L_{(k,n-k)}(q)$ for the
general linear group). The reason for this is that Harish-Chandra induction is
transitive. This means that if $L$ has more general linear factors then the
functor $R_L^G$ can be calculated iteratively.

\subsubsection{Unipotent Characters}

First we will first look at the effect of Harish-Chandra induction on unipotent
characters for the groups $GL_n(q)$, $U_n(q)$, $CSp_{2n}(q)$, $SO_{2n+1}(q)$,
$CSO^\pm_{2n}(q)$\cite[Section 5.3]{hiskes2000}.
\newline
\newline
\begin{description}
\item[$\mathbf{GL_n(q)}$] If $G=GL_n(q)$ and $L=L_{(k,n-k)}\cong GL_k(q)\times
GL_{n-k}(q)$ then the
multiplicity of $\chi_{1,\gamma}$ in
$R_L^G(\chi_{1,\alpha}\otimes\chi_{1,\beta})$ is $g_{\alpha,\beta}^\gamma$
(see~\ref{sec:LitRic}).
\newline
\item[$\mathbf{U_n(q)}$, $\mathbf{CSp_{2n}(q)}$, $\mathbf{SO_{2n+1}(q)}$, $\mathbf{CSO^-_{2n}(q)}$] Let $G=G_m(q)$ be one of the above groups
and $L=L_{m,(k)}(q)\cong GL_k(q)\times G_{m-2k}(q)$.
We use the alternative description of partitions and symbols described in section~\ref{not:char}.
\newline
\newline
In all four groups the multiplicity of $\chi_{1,(s,\mu,\nu)}$ in
$R_L^G(\chi_{1,\gamma}\otimes\chi_{1,(t,\alpha,\beta)})$ is $0$ unless $s=t$ and
in this case it is equal to the multiplicity of $\chi^{(\mu,\nu)}$ in
$Ind_{S_k\times W_{v-k}}^{W_v}(\chi^{\gamma}\otimes\chi^{(\alpha,\beta)})$ for
an appropriate $v$ (see~\ref{sec:weyl}).
\newline
\item[$\mathbf{CSO^+_{2n}(q)}$] For $CSO^+_{2n}(q)$ we again use the description
in~\ref{not:char}. We then do
exactly the same calculation as above unless $s=0$. In this case the calculation
is then carried out in a Weyl group of type $D_m$ instead of $B_m$. In other
words the multiplicity of $\chi_{1,(0,\mu,\nu)}$ in
$R_L^G(\chi_{1,\gamma}\otimes\chi_{1,(0,\alpha,\beta)})$ is equal to the
multiplicity of $\chi^{\{\mu,\nu\}}$ in $Ind_{S_k\times
\widetilde{W}_{v-k}}^{\widetilde{W}_v}(\chi^\gamma\otimes\chi^{\{\alpha,\beta\}}
)$ for an appropriate $v$. Note that $\chi_{1,(0,\mu,\mu)}$,
$\chi'_{1,(0,\mu,\mu)}$ correspond to $\chi^{\{\mu,\mu\}}$,
$\chi'^{\{\mu,\mu\}}$ respectively.
\end{description}

\subsubsection{General Case}

Now we drop the assumption that the characters are unipotent. However, we keep
the assumption that our semisimple labels don't have $-1$ as an eigenvalue. Let
$s\in L^*$ be semisimple. We can then use~\ref{sec:char} to calculate
$R_L^G(\chi_{s,\lambda})$ by passing to Harish-Chandra induction for unipotent
characters from $C_{L^*}(s)^*$ to $C_{G^*}(s)^*$\cite[Section 5.4]{hiskes2000}.

\begin{description}
\item[$\mathbf{GL_n(q)}$] Let $G=GL_n(q)$ and $L=L_{(k,n-k)}\cong GL_k(q)\times
GL_{n-k}(q)$. If
$\chi_{s_1,\lambda_1}\otimes\chi_{s_2,\lambda_2}$ is a character of $L$ then

\begin{gather*}
C_{L^*}(s_1\times
s_2)\cong\prod_{\Gamma\in\Delta}(GL_{m_\Gamma(s_1)}(q^{d_\Gamma})\times
GL_{m_\Gamma(s_2)}(q^{d_\Gamma}))
\end{gather*}

So for each $\Gamma\in\Delta$ we do Harish-Chandra induction to get a sum of
unipotent character of

\begin{gather*}
GL_{m_\Gamma(s_1)+m_\Gamma(s_2)}(q^{d_\Gamma})
\end{gather*}

We now have a sum of unipotent characters for $C_{G^*}(s_1\times s_2)^*$ and
hence a character of $G$. This character is
$R_L^G(\chi_{s_1,\lambda_1}\otimes\chi_{s_2,\lambda_2})$.
\newline
\item[$\mathbf{U_n(q)}$] Let $G=U_n(q)$ and $L=L_{n,(k)}\cong GL_k(q)\times
U_{n-2k}(q)$. If
$\chi_{s_1,\lambda_1}\otimes\chi_{s_2,\lambda_2}$ is a character of $L$ then

\begin{center}
$C_{L^*}(s_1\times
s_2)\cong\prod_{\Gamma\in\Lambda_1}(GL_{m_\Gamma(s_1)}(q^{d_\Gamma})\times
U_{m_\Gamma(s_2)}(q^{d_\Gamma}))\times$\\
$\prod_{\Gamma\overline{\Gamma}\in\Lambda_2}(GL_{m_\Gamma(s_1)}(q^{d_\Gamma}
)\times
GL_{m_{\overline{\Gamma}}(s_1)}(q^{d_\Gamma})
\times GL_{m_{\Gamma\overline{\Gamma}}(s_2)}(q^{d_\Gamma}))$
\end{center}

Then for each $\Gamma\in\Lambda_1$ we do Harish-Chandra induction to get a sum
of unipotent characters of

\begin{center}
$U_{2m_\Gamma(s_1)+m_\Gamma(s_2)}(q^{d_\Gamma})$
\end{center}

and for each $\Gamma\in\Lambda_2$ we do Harish-Chandra induction twice to get a
sum of unipotent characters of

\begin{center}
$GL_{m_\Gamma(s_1)+m_{\overline{\Gamma}}(s_1)+m_{\Gamma\overline{\Gamma}}(s_2)}
(q^{d_\Gamma})$
\end{center}

We now have a sum of unipotent characters of $C_{G^*}(s_1\times s_2)^*$ and
hence a character of $G$. This character is
$R_L^G(\chi_{s_1,\lambda_1}\otimes\chi_{s_2,\lambda_2})$.
\newline
\item[$\mathbf{SO_{2n+1}(q)}$, $\mathbf{CSp_{2n}(q)}$, $\mathbf{CSO^+_{2n}(q)}$,
$\mathbf{CSO^-_{2n}(q)}$] Let $G=G_m(q)$ be $SO_{2n+1}(q)$, $CSp_{2n}(q)$,
$CSO^+_{2n}(q)$ or
$CSO^-_{2n}(q)$ and $L=L_{m,(k)}\cong GL_k(q)\times G_{m-2k}(q)$. If
$\chi_{s_1,\lambda_1}\otimes\chi_{s_2,\lambda_2}$ is a character of $L$ then

\begin{center}
$C_{L^*}(s_1\times s_2)\cong(GL_{m_{X-1}(s_1)}(q)\times
G^*_{m_{X-1}(s_2)}(q))\times$\\
$\prod_{\Gamma\in\Psi_1}(GL_{m_\Gamma(s_1)}(q^{d_\Gamma})\times
U_{{m_\Gamma}(s_1)}(q^{d_\Gamma}))$\\
$\prod_{\Gamma\widetilde{\Gamma}\in\Psi_2}(GL_{m_\Gamma(s_1)}(q^{d_\Gamma}
)\times
GL_{m_{\widetilde{\Gamma}}(s_1)}(q^{d_\Gamma})
\times GL_{m_{\Gamma\widetilde{\Gamma}}(s_2)}(q^{d_\Gamma}))$
\end{center}

Once again we do Harish-Chandra induction on unipotent characters to obtain a
sum of unipotent characters of $C_{G^*}(s_1\times s_2)^*$ and hence a character
of $G$. This character is
$R_L^G(\chi_{s_1,\lambda_1}\otimes\chi_{s_2,\lambda_2})$.
\end{description}

\subsection{Characters with Equal Dimension}\label{sec:dim}

We can use Harish-Chandra induction to show that some pairs of characters have
the same dimension. This will be useful later when we will be performing some
calculations.

\begin{lem}
The following pairs of characters have the same dimensions.
\begin{enumerate}
\item $\chi_{s,\lambda}$ and $\chi_{s^{-q_0},\lambda^{-q_0}}$ of $GL_k(q)$ where
$q=q_0^2$ and $\lambda^{-q_0}_{\Gamma}=\lambda_{\overline{\Gamma}}$
(see~\ref{sec:semi}).
\item $\chi_{s,\lambda}$ and $\chi_{s^{-1},\lambda^{-1}}$ of $GL_k(q)$ where
$\lambda^{-1}_{\Gamma}=\lambda_{\widetilde{\Gamma}}$ (see section~\ref{sec:semi}).
\item $\chi_{s,\lambda}$ and $\chi_{s,\lambda'}$ of $CSO^+_{2n}(q)$ where
$s\in\mathbb{F}_q$ and $\lambda$ is a degenerate symbol.
\end{enumerate}
\end{lem}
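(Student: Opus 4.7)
The plan is to establish each dimension equality by exhibiting an automorphism of the ambient group that carries one character to the other; since automorphisms preserve character dimensions, the task then reduces to tracking how the Jordan decomposition labels transform.

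For part (1), take the map $\sigma_1: A \mapsto J_k A^{-t[q_0]} J_k$ on $GL_k(q)$, which is the same recipe used in Section~\ref{sec:levi} to define the Frobenius for $U_n(q)$. It is an automorphism of $GL_k(q)$, so $\chi_{s,\lambda} \circ \sigma_1^{-1}$ is a character of the same dimension as $\chi_{s,\lambda}$; the task is to identify this pull-back as $\chi_{s^{-q_0},\lambda^{-q_0}}$. A direct eigenvalue check shows that $\sigma_1(s)$ is $GL_k(q)$-conjugate to $s^{-q_0}$ and satisfies $m_\Gamma(\sigma_1(s)) = m_{\overline\Gamma}(s)$, so $\sigma_1$ restricts to an isomorphism $C_{GL_k(q)}(s) \to C_{GL_k(q)}(\sigma_1(s))$ that exchanges the factors $GL_{m_\Gamma(s)}(q^{d_\Gamma})$ according to the involution $\Gamma \leftrightarrow \overline\Gamma$. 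Under this identification the unipotent label $\lambda$ on the first centralizer becomes $\lambda^{-q_0}$ on the second, as required. Part (2) is handled identically, using instead $\sigma_2: A \mapsto J_k A^{-t} J_k$ (the map appearing for $Sp_{2n}(q)$) together with the involution $\Gamma \leftrightarrow \widetilde\Gamma$ on $\Phi$.

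For part (3), the two characters $\chi_{s,\lambda}$ and $\chi_{s,\lambda'}$ of $CSO^+_{2n}(q)$ are interchanged by the outer automorphism induced by conjugation with any element of $CO^+_{2n}(q) \setminus CSO^+_{2n}(q)$. Since $s \in \mathbb{F}_q$ is central in $Cl^+_{2n}(q)$, this automorphism fixes the semisimple label of the Jordan decomposition, so only the symbol label is affected; the resulting swap mirrors the way an element of $W_n \setminus \widetilde W_n$ exchanges the pair of degenerate characters $\chi^{\{\mu,\mu\}}$, $\chi'^{\{\mu,\mu\}}$ of $\widetilde W_n$ described in Section~\ref{sec:weyl}. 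The equality of dimensions is then immediate.

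The main obstacle is the verification, for (1) and (2), that the pull-back $\chi_{s,\lambda} \circ \sigma_i^{-1}$ really carries the specific Jordan label $(s^{-q_0}, \lambda^{-q_0})$ (respectively $(s^{-1}, \lambda^{-1})$) rather than some other character sharing the same semisimple part. This amounts to tracing how Lusztig's Jordan decomposition interacts with outer automorphisms and requires careful bookkeeping against the conventions fixed in Section~\ref{sec:char}.
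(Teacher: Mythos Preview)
Your approach is correct but differs from the paper's. The paper argues via Harish-Chandra induction: for (1) it embeds $GL_k(q)$ as the linear factor of the Levi $L=GL_k(q)\times U_n(q)$ inside $G=U_{n+2k}(q)$ and observes, from the explicit branching rules of Section~\ref{sec:har}, that $\chi_{s,\lambda}\otimes\chi$ and $\chi_{s^{-q_0},\lambda^{-q_0}}\otimes\chi$ Harish-Chandra induce to the \emph{same} character of $G$; since $R_L^G$ multiplies degree by the constant $[G:LU]$, the two $GL_k(q)$-characters have equal dimension. Parts (2) and (3) are handled identically with $G=CSp_{2(n+k)}(q)$ and $G=CSO^+_{2(n+1)}(q)$ respectively. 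Your automorphism argument is conceptually more direct and explains \emph{why} the dimensions agree, but it relies on the compatibility of Lusztig's Jordan decomposition with rational outer automorphisms --- precisely the ``main obstacle'' you flag --- which, while true, lies outside the combinatorial framework already set up in the paper. The paper's route avoids this by reducing everything to the Harish-Chandra formulas of Section~\ref{sec:har}, at the cost of being less transparent about the underlying symmetry.
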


\begin{proof}
We prove all three results by considering Harish-Chandra induction of a pair of
characters from $L$ to $G$ that give the same character and noting that the
dimension is always multiplied by $[G:LU]$ (see~\ref{sec:har}).
\begin{enumerate}
\item We set $G=U_{n+2k}(q)$ and $L=GL_k(q)\times U_n(q)$. We Harish-Chandra
induce $\chi_{s,\lambda}\otimes\chi$ and
$\chi_{s^{-q_0},\lambda^{-q_0}}\otimes\chi$ where $\chi$ is any unipotent
character of $U_(q)$.
\item We set $G=CSp_{2(n+k)}(q)$ and $L=GL_k(q)\times CSp_{2n}(q)$. We
Harish-Chandra induce $\chi_{s,\lambda}\otimes\chi$ and
$\chi_{s^{-1},\lambda^{-1}}\otimes\chi$ where $\chi$ is any unipotent character
of $Sp_{2n}(q)$.
\item We set $G=CSO^+_{2(n+1)}(q)$ and $L=GL_1(q)\times CSO^+_{2n}(q)$. We
Harish-Chandra induce $\chi_{1,(1)}\otimes\chi_{s,\lambda}$ and
$\chi_{1,(1)}\otimes\chi_{s,\lambda'}$.
\end{enumerate}
\end{proof}

\section{Unipotent Blocks of Finite Classical groups}\label{sec:blocks}

We continue with the assumption that $p$ is an odd prime not dividing $q$. We
will describe a subset of the $p$-blocks of our finite classical groups called
the unipotent blocks. First we need the notion of linear and unitary
prime\cite[Section 6.1]{hiskes2000}. Let $G=GL_n(q)$, $U_n(q)$, $Sp_{2n}(q)$,
$CSp_{2n}(q)$, $SO_{2n+1}(q)$, $SO^\pm_{2n}(q)$ or $CSO^\pm_{2n}(q)$.
\newline
\newline
Let $d$ be the multiplicative order of $q\mod{p}$ and $p^a$ the maximum power
of $p$ dividing $q^d-1$. If $G=GL_n(q)$ let $e=d$. If $G=U_n(q)$ let $e$ be the
multiplicative order of $-q_0\mod{p}$. In this case we have:

\begin{itemize}
\item $d=e$ if $e$ is odd
\item $d=\frac{1}{2}e$ if $e$ is even
\end{itemize}

If $G$ is any of the other seven groups define $e$ to be the multiplicative
order of $q^2$ (mod $p$). In these cases we have:

\begin{itemize}
\item $e=d$ if $d$ is odd
\item $e=\frac{1}{2}d$ if $d$ is even
\end{itemize}

When $G$ is not a general linear group we have the notion of $p$ being a linear
or unitary prime. When we work with the unitary group $p$ is unitary if $e=d$
and linear otherwise. For all the other groups $p$ is linear if $e=d$ and
unitary otherwise. From now on we will assume $p$ is linear with respect to $q$.

\subsection{Blocks}

Let us now restrict our attention to $G=GL_n(q)$, $U_n(q)$, $CSp_{2n}(q)$,
$SO_{2n+1}(q)$ or $CSO^\pm_{2n}(q)$. We follow\cite[Section
6.2]{hiskes2000}. We will describe the unipotent blocks of $Sp_{2n}(q)$ and
$SO^\pm_{2n}(q)$ at the end of this section.
\newline
\newline
First we introduce the notion of a $\phi_d$-torus where $\phi_d$ is the $d$th
cyclotomic polynomial. A $\phi_d$-torus of an algebraic group $\mathbf{G}$ is an
$F$ stable torus $\mathbf{T}$ whose polynomial order is $(\phi_d)^t$ for some
$t$. Now a $d$-split Levi subgroup $\mathbf{K}$ of $\mathbf{G}$ is a subgroup of
the form $C_\mathbf{G}(\mathbf{T})$ for such a $\mathbf{T}$. A character of
$\mathbf{K}^F$ is described as $d$-cuspidal if it does not appear as a summand
of a character Harish-Chandra induced up from any $\mathbf{K'}^F$ where
$\mathbf{K'}<\mathbf{K}$ is a $d$-split Levi subgroup of $\mathbf{K}$.
\newline
\newline
The unipotent blocks of $\mathbf{G}^F$ are labeled by conjugacy classes (in $G$)
of pairs $(\mathbf{K},\psi)$ where $\mathbf{K}$ is a $d$-split Levi subgroup of
$\mathbf{G}$ and $\psi$ is a $d$-cuspidal character of $\mathbf{K}^F$.
\newline
\newline
Now we take $G=G_m(q)=\mathbf{G}^F$ to be one of our six groups and $p$ a linear
prime with respect to $q$ where appropriate. A typical unipotent block of $G$ is
labeled by $(\mathbf{K},\psi)$ where $\mathbf{K}^F\cong GL_1(q^d)^t\times
G_{m'}(q)$ and $\psi=(1_{GL_1(q^d)})^t\otimes\chi$ where $m'=m-dt$ when $G$ is
the general linear group and $m-2dt$ otherwise and $\chi$ is a unipotent
character of $G_{m'}(q)$ whose label is an $e$-core.
\newline
\newline
An alternative way to describe the $p$-blocks, where $p$ is a linear prime with
respect to $q$, is as follows. This description can be found in more detail
in\cite[Theorem D]{fonsri1982} and \cite[Theorem 10B, Theorem 11E]{fonsri1989}.
First take a unipotent character of $G$. Associated to this unipotent character
we have either a partition or symbol of which we take the $e$-core. Our
unipotent block is then just labeled by this $e$-core $\mu$.
\newline
\newline
We can pair our two descriptions up by letting $\mu$ in the second description
be the label of $\chi$ in the first. We call this block $B_\mu$.
\newline
\newline
Note that for $G=CSO^+_{2n}(q)$, $\chi$ labeled by a degenerate symbol and $t>0$
then $\mu$ would be both copies of this degenerate symbol. However, the two
corresponding characters of $G=CSO^+_{2(n-dt)}(q)$ are conjugate in $G$ and
hence both labels label the same block of $G$.

\subsection{Defect Groups and Dual Defect Groups}\label{sec:defect}

Consider the block of $G$ labeled by $(\mathbf{K},\psi)$. Any Sylow $p$-subgroup
of $C_\mathbf{G}^0([\mathbf{K},\mathbf{K}])^F$ is a defect group for the
block\cite[Section 6.2]{hiskes2000}.
\newline
P. Fong and B. Srinivasan describe the concept of dual defect groups. If $D\leq
G$ is a defect group of $(\mathbf{K},\psi)$ then $D^*$ is naturally a subgroup
of $G^*$. For $GL_n(q)$ and $U_n(q)$ we identify $G$ with $G^*$ and $D$ with
$D^*$. For the other four groups see\cite[Section 12, Section 13]{fonsri1989}.
$D^*$ is then described as a dual defect group for $(\mathbf{K},\psi)$.

\subsection{Characters in Unipotent Blocks}\label{sec:charblock}

We now describe the characters in a unipotent block\cite[Section 7]{fonsri1982},
\cite[Section 12, Section 13]{fonsri1989}. If $\Gamma\in\Phi$ (or $\Delta$ for
$GL_n(q)$ or $\Lambda$ for $U_n(q)$) then we set $e_\Gamma$ to be the additive
of $d_\Gamma$ (mod $d$). Let $G_m(q)$ be one of our groups and let $B_\mu$ be a
unipotent block of $G_m(q)$, $\chi_{t,\lambda}$ an irreducible character of
$G_m(q)$ and fix a dual defect group $D^*$ of $B_\mu$. Then $\chi_{t,\lambda}$
lies in $B_\mu$ if and only if:

\begin{enumerate}
\item $t$ is conjugate to $x$ for some $x\in D^*$
\item The $e$-core of $\lambda_{X-1}$ is a subset of $\mu$
\item The $e_\Gamma$-core of $\lambda_\Gamma$ is empty for all other $\Gamma$
\end{enumerate}

\subsection{Unipotent Blocks of \texorpdfstring{$Sp_{2n}(q)$}{TEXT} and \texorpdfstring{$SO_{2n}^\pm(q)$}{TEXT}}\label{sec:spso}

Let $G=Sp_{2n}(q)$ (respectively $SO_{2n}^\pm(q)$) and $\hat{G}=CSp_{2n}(q)$
(respectively $CSO_{2n}^\pm(q)$). We now deal with the unipotent blocks of
$\mathcal{O}G$ via the following lemma:

\begin{lem}\label{lem:so2n}
The unipotent blocks of $\mathcal{O}G$ are in one-to-one correspondence with
those of
$\mathcal{O}\hat{G}$. Furthermore if $i$ and $j$ are corresponding block
idempotents then we have the
following isomorphism of $\mathcal{O}$-algebras:

\begin{align*}
\mathcal{O}Z(\hat{G})_p\otimes_\mathcal{O}\mathcal{O}Gi\cong\mathcal{O}\hat{G}j
\end{align*}

The isomorphism is given by multiplication by $j$ and the correspondence of
characters from $\mathcal{O}\hat{G}j$ to
$\mathcal{O}Z(\hat{G})_p\otimes_\mathcal{O}\mathcal{O}Gi$ is given by
restriction.
\newline
Also if $D$ is a defect group of $\mathcal{O}Gi$ then $Z(\hat{G})_p\times D$
is a defect group of $\mathcal{O}\hat{G}j$.
\end{lem}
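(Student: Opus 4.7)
My plan is to produce the isomorphism via the intermediate subgroup $H := Z(\hat{G})_p \cdot G$ of $\hat{G}$, constructed so as to decompose as an internal direct product. Because $\hat{G}/G$ embeds into $\mathbb{F}_q^\times$ via the similitude factor, and $Z(\hat{G})$ consists of scalar matrices with $Z(\hat{G}) \cap G = \{\pm I\}$ of order coprime to the odd prime $p$, we have $Z(\hat{G})_p \cap G = 1$. Hence $H$ is the internal direct product $Z(\hat{G})_p \times G$ and $\mathcal{O}H \cong \mathcal{O}Z(\hat{G})_p \otimes_\mathcal{O} \mathcal{O}G$ as $\mathcal{O}$-algebras. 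Moreover the image of $Z(\hat{G})_p$ in the cyclic quotient $\hat{G}/G$ is its full Sylow $p$-subgroup, so $\hat{G}/H$ has order coprime to $p$.

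Next I would establish the bijection of unipotent blocks. Since unipotent characters of $G$ and of $\hat{G}$ are parametrized by the same combinatorial data (symbols of suitable rank and defect, see~\ref{sec:char}), every unipotent character of $G$ is $\hat{G}$-stable. Consequently each unipotent block idempotent $i$ of $\mathcal{O}G$ is fixed by $\hat{G}$-conjugation and hence central in $\mathcal{O}\hat{G}$. Decomposing $i$ into central primitive idempotents of $\mathcal{O}\hat{G}$ yields the blocks of $\mathcal{O}\hat{G}$ covering $i$, and exactly one of them, call it $j$, is unipotent (it contains the extensions to $\hat{G}$ of the unipotent characters of $G$ lying in $i$); this $j$ is the desired counterpart of $i$.

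The algebra isomorphism arises from multiplication by $j$, giving
\[
\phi : \mathcal{O}Hi \longrightarrow \mathcal{O}\hat{G}j, \qquad x \mapsto xj.
\]
Under the identification $\mathcal{O}Hi \cong \mathcal{O}Z(\hat{G})_p \otimes_\mathcal{O} \mathcal{O}Gi$ this is exactly the map asserted in the lemma. The main obstacle is showing $\phi$ is bijective. For injectivity I would prove $j \in \mathcal{O}H$, which follows from the observation that every unipotent character of $\hat{G}$ has trivial central character on the $p'$-complement $Z(\hat{G})_{p'}$: this is a consequence of Lusztig's Jordan decomposition, since unipotent characters carry the semisimple label $s=1$ and so the central character is forced to be trivial on the image of the connected centre. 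For surjectivity I would count characters via the Fong--Srinivasan description (sections~\ref{sec:char} and~\ref{sec:charblock}): each character of $\mathcal{O}Gi$ has exactly $|Z(\hat{G})_p|$ extensions lying in $\mathcal{O}\hat{G}j$, which matches the character count of $\mathcal{O}Z(\hat{G})_p \otimes \mathcal{O}Gi$. The correspondence by restriction is then immediate, since $j = 1 \otimes i$ under the identification and restriction to $G$ forgets the $Z(\hat{G})_p$ factor.

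Finally, the defect-group statement follows from the algebra isomorphism together with the direct-product structure: tensoring $\mathcal{O}Gi$ with the group algebra of the central $p$-group $Z(\hat{G})_p$ multiplies the defect group by $Z(\hat{G})_p$, so $Z(\hat{G})_p \times D$ is a defect group of $\mathcal{O}\hat{G}j$ whenever $D$ is one of $\mathcal{O}Gi$.
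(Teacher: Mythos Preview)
Your overall strategy via the intermediate subgroup $H=Z(\hat G)_p\cdot G\cong Z(\hat G)_p\times G$ is reasonable and is in spirit what underlies the Cabanes--Enguehard results the paper invokes. However, the injectivity step contains a genuine error. You claim $j\in\mathcal{O}H$; this cannot hold in general. Since $\mathcal{O}Hi\cong\mathcal{O}Z(\hat G)_p\otimes_\mathcal{O}\mathcal{O}Gi$ with $Z(\hat G)_p$ a $p$-group, the idempotent $i$ is a \emph{primitive} central idempotent of $\mathcal{O}H$. As $j$ is central in $\mathcal{O}\hat G$ and $ij=j$, the hypothesis $j\in\mathcal{O}H$ would force $j=i$. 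But then $\mathcal{O}\hat Gj=\mathcal{O}\hat Gi=\bigoplus_{t\in\hat G/G}t\,\mathcal{O}Gi$ has $\mathcal{O}$-rank $(q-1)\cdot\operatorname{rank}_\mathcal{O}(\mathcal{O}Gi)$, contradicting the asserted rank $(q-1)_p\cdot\operatorname{rank}_\mathcal{O}(\mathcal{O}Gi)$ whenever $(q-1)_{p'}>1$. Your justification is also off target: the idempotent $j$ is built from \emph{all} irreducible characters in the block, not only the unipotent ones, so knowing the central character of unipotent characters on $Z(\hat G)_{p'}$ says nothing about the support of $j$; and even if every $\theta$ in $j$ satisfied $\omega_\theta|_{Z(\hat G)_{p'}}=1$, that only yields $zj=j$ for $z\in Z(\hat G)_{p'}$, which is not the condition $j\in\mathcal{O}H$ (note $Z(\hat G)_{p'}$ is not a transversal of $H$ in $\hat G$, since $Z(\hat G)G$ has index $2$ in $\hat G$).

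What one actually needs is that restriction induces a bijection $\operatorname{Irr}(K\hat Gj)\to\operatorname{Irr}(KHi)$, from which $\phi\otimes_\mathcal{O}K$ is an isomorphism and hence so is $\phi$. Establishing this bijection requires two non-trivial inputs: first, that every $\theta\in\operatorname{Irr}(K\hat Gj)$ restricts irreducibly to $H$, i.e.\ $\lambda\theta\neq\theta$ for every non-trivial linear character $\lambda$ of $\hat G/H$; second, that of the $q-1$ extensions to $\hat G$ of each $\chi\in\operatorname{Irr}(KGi)$, exactly $(q-1)_p$ lie in $j$. Both statements rest on the Lusztig/Fong--Srinivasan parametrisation and on the fact that tensoring by linear characters of $\hat G/G$ corresponds to multiplying the semisimple label by central elements of $G^*$, so that $p'$-characters move $p$-element labels out of the unipotent block. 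This is exactly the content of the Cabanes--Enguehard results the paper cites (their Theorem~12 and Proposition~6), and your character-counting sketch does not supply a proof of either input.
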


Before we prove the above we mention that we will label the block corresponding
to $i$ with the same label (symbol) as the block corresponding to $j$.

\begin{proof}
Apply\cite[Theorem 12]{cabeng1993} to
$\mathbf{G}=Sp_{2n}(\overline{\mathbb{F}_q})$ and
$CSp_{2n}(\overline{\mathbb{F}_q})$ (respectively
$SO^\pm_{2n}(\overline{\mathbb{F}_q})$ and
$CSO^\pm_{2n}(\overline{\mathbb{F}_q})$). The lemma then follows from chasing
the appropriate character correspondences and then applying\cite[Proposition
6]{cabeng1993}.
\end{proof}

\section{Main Theorem}\label{sec:thm}

Let $G=G_m(q)$ be one of the following groups:

\begin{enumerate}
\item $U_n(q)$
\item \begin{enumerate} 
      \item $Sp_{2n}(q)$
      \item $CSp_{2n}(q)$
      \end{enumerate}
\item $SO_{2n+1}(q)$
\item \begin{enumerate}
      \item $SO^\pm_{2n}(q)$
      \item $CSO^\pm_{2n}(q)$
      \end{enumerate}
\end{enumerate}

We continue with our assumption that $q$ is odd in all except case (1) where it can be even or odd.
As in previous sections let $p$ an odd prime not dividing $q$. We also assume that $p$ is a
linear prime with respect to $q$. $d$ and $e$ will have the meaning given in~\ref{sec:blocks}.
\newline
\newline
For each $w$ $(0\leq w<p)$ we wish to find an integer $m=m(w)$ and a unipotent
block $B_\rho$ of $G_m(q)$ such that theorem~\ref{thm:intro} holds. We now
describe conditions for $m$ and $\rho$ to satisfy in each of the four cases.

\begin{enumerate}\label{cond:rho}
\item[1] $\rho$ is an $e$-core partition with a representation on a
$2d$-abacus such that the $i$th runner has at least $w-1$ fewer beads than the
$(i+2)$th runner for $(0\leq i\leq 2d-3)$. If $r$ is the rank of $\rho$ then
$m=r+2dw$.
\item[2,3,4] There exists a $2d$-linear diagram of $\rho$ such that
the $i$th runner has at least
$w-1$ fewer beads than the $(i+1)$th runner for $(0\leq i\leq d-2)$ and $(d\leq
i\leq 2d-2)$. $\rho$ has non-zero rank $r$ and $m=2(r+dw)+1$ in case 3 and
$2(r+dw)$ in cases 2 and 4.
\end{enumerate}

We now fix $m$ and $\rho$ along with a $2d$-abacus representation of $\rho$ such
that the above
property holds and every position on the first $w$ rows is occupied with a bead.
\newline
\newline
We need to set up some notation that is required for us to state and prove the main theorem.
\newline
\newline
We set $\widetilde{G}=\widetilde{G}_m(q)$ to be $O_{2n+1}(q)$ (respectively
$O^\pm_{2n}(q)$, $CO^\pm_{2n}(q)$) when $G=SO_{2n+1}(q)$ (respectively
$SO^\pm_{2n}(q)$, $CSO^\pm_{2n}(q)$) and $G$ otherwise. Let $T_i$ be the subgroup
of $\widetilde{G}$ generated by

\begin{gather*}
\left(\begin{array}{ccccc}
 I_{d(i-1)} &               &          &              &\\
                &          &          & I_d &\\
                &          & I_{m-2di} &               &\\
                & -I_d &          &          &\\
                &          &        &      & I_{d(i-1)}\\
\end{array}\right)
\end{gather*}

in case 2 and by

\begin{gather*}
\left(\begin{array}{ccccc}
 I_{d(i-1)} &               &          &              &\\
                &          &          & I_d &\\
                &          & I_{m-2di} &               &\\
                & I_d &          &          &\\
                &          &        &      & I_{d(i-1)}\\
\end{array}\right)
\end{gather*}

in cases 1,3 and 4.
\newline
\newline
Let $G=G_0>G_1>\dots>G_w=L$ be a sequence of Levi subgroups of $G$ where
$G_i=L_{m,(d^i)}(q)\cong GL_d(q)_1\times\dots\times GL_d(q)_i\times
G_{m-2di}(q)$ as described in~\ref{sec:levi}.
\newline
\newline
We denote by $H$ the subgroup
$(GL_d(q)_1.T_1)\times\dots\times(GL_d(q)_w.T_w)\times\widetilde{G}_{m-2dw}(q)$ of
$\widetilde{G}_m(q)$. Note that if we adopt our notation from the introduction
we have $H$ is naturally ismorphic to $(GL_d(q).2)^w\times\widetilde{G}_{m-2dw}(q)$.
\newline
\newline
Now consider the Levi subgroup $L_{m,(dw)}(q)\cong GL_{dw}(q)\times
G_{m-2dw}(q)\geq L$ of $G$. Let $S$ be the subgroup of permutation matrices of
$GL_{dw}(q)\leq L_{m,(dw)}(q)$ whose conjugation action permutes the $GL_d(q)_i$s. Clearly $S$
normalises $H$ and intersects it trivially so $H.S\cong((GL_d(q).2)\wr
S_w)\times\widetilde{G}_{m-2dw}(q)$. We set $N=H.S\cap G$ and have in all cases that $|N|=2^ww!|L|$.
\newline
\newline
For $(0<i\leq w)$ let $a_i$ be the principal block idempotent of $\mathcal{O}GL_d(q)_i$ and for $(0\leq i\leq w)$ let
$f_{w-i}$ be the unipotent block idempotent of $\mathcal{O}G_{m-2di}(q)$ associated with
the partition $\rho$. We set $b_i$ to be the block idempotent $a_1\otimes\dots\otimes a_i\otimes f_{w-i}$
of $\mathcal{O}G_i$. We set $b=b_0$ and $f=b_w$. When $\rho$ is degenerate in case 4 we
denote by $f_0$ and $f'_0$ the $2$ blocks of $G_{m-2dw}(q)$ labeled by $\rho$ and by $f=b_w$ and $f'=b'_w$ the block
idempotents $a_1\otimes\dots\otimes a_w\otimes f_0$ and $a_1\otimes\dots\otimes
a_w\otimes f'_0$ of $\mathcal{O}L$ respectively.
\newline
\newline
Fix a Sylow $p$-subgroup $R$ of $GL_d(q)$ (note that $|R|=p^a$
see~\ref{sec:blocks} for the meaning of $a$) and let $P_1\times\dots\times P_w$
be $w$ copies of $R$, one in each $GL_d(q)_i$ of $GL_d(q)_1\times\dots\times
GL_d(q)_w$. We also set $Z$ to be the subgroup of $G$ consisting of scalar
matrices and $P=Z_p.(P_1\times\dots\times P_w)$. $P$ is a defect group for
$\mathcal{O}Gb$ (see~\ref{sec:defect}) and $N_G(P)\leq N$.
\newline
\newline
Additionally for $(1\leq i\leq w)$ let $U_i$ be the subgroup of matrices of
$G_{i-1}$ of the form:

\begin{gather*}
\left(\begin{array}{ccccc}
 I_{d(i-1)} &               &          &               &\\
                & I_d   & *   & * &\\
                &          & I_{m-di} & *              &\\
                &          &          & I_d &\\
                &          &        &      & I_{d(i-1)}\\
\end{array}\right)
\end{gather*}

And set $U_i^{+}= \frac{1}{|U_{i}|} \sum_{u\in U_i}u$.

\subsection{Non-Degenerate Case}

In this subsection we assume that $\rho$ is non-degenerate. We restrict our
attention to cases 1, 2(b), 3 and 4(b). We will later prove the corresponding
theorem for cases 2(a) and 4(a) using~\ref{lem:so2n}.

\begin{thm}\label{thm:nondegen}
$\mathcal{O}Nf$ is a block of $\mathcal{O}Nf$ and is Morita
equivalent to $\mathcal{O}Gb$.
\end{thm}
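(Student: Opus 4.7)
The proof splits naturally into three parts: (i) showing that $f$ is a block idempotent of $\mathcal{O}N$; (ii) constructing a Morita equivalence $\mathcal{O}Gb \sim \mathcal{O}Lf$ via iterated Harish-Chandra induction along the Levi chain $G = G_0 > G_1 > \cdots > G_w = L$; and (iii) lifting this to a Morita equivalence $\mathcal{O}Gb \sim \mathcal{O}Nf$.

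For part (i), each $a_i$ is the principal block idempotent of $\mathcal{O}GL_d(q)_i$, hence invariant under every automorphism of that factor (in particular under conjugation by the involution $T_i$), and the permutations in $S$ merely rearrange identical factors. The non-degeneracy of $\rho$ ensures that $f_0$ is the unique block of $\mathcal{O}G_{m-2dw}(q)$ with label $\rho$, so $f_0$ is also fixed under the action of $\widetilde{G}_{m-2dw}(q)/G_{m-2dw}(q)$ present in $N$. Thus $f$ is $N$-stable, and its primitivity in $Z(\mathcal{O}N)$ will follow a posteriori from the Morita equivalence established in part (iii).

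For part (ii), I would show that for each $1 \le i \le w$ the bimodule $\mathcal{O}G_{i-1}b_{i-1}U_i^+ b_i$ realises a Morita equivalence $\mathcal{O}G_{i-1}b_{i-1} \sim \mathcal{O}G_i b_i$. Because $U_i$ has order coprime to $p$ the bimodule has trivial source and the associated functor (a restriction of $HCInd_{G_i,b_i}^{G_{i-1},b_{i-1}}$) is exact; it therefore suffices to verify that this functor induces a bijection on ordinary irreducible characters. Combining the character descriptions of~\ref{sec:char} and~\ref{sec:charblock} with the reduction of general Harish-Chandra induction to the unipotent situation explained in~\ref{sec:har}, the problem reduces to Littlewood--Richardson-type computations in the branching rules of the Weyl groups of type $A$, $B$, $D$ recorded in~\ref{sec:weyl}. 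The gap condition on the $2d$-abacus representation of $\rho$ is engineered precisely so that, for every character in block $b_i$, the corresponding Littlewood--Richardson coefficients that contribute to block $b_{i-1}$ sum to $1$. Iterating over $i$ gives a Morita equivalence $\mathcal{O}Gb \sim \mathcal{O}Lf$ via the bimodule $M = \mathcal{O}Gb U_1^+ \cdots U_w^+ f$.

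For part (iii), I would extend $M$ to an $(\mathcal{O}Gb,\mathcal{O}Nf)$-bimodule $\widetilde{M}$. The elements of $S$ act on $M$ by permuting the factors $U_i^+$, and conjugation by each $T_i$ normalises $U_i$ (up to a controllable twist, since $T_i \in \widetilde{G}$). The $Z_p$ factor, relevant for the conformal groups, commutes with everything and simply tensors through. Once $\widetilde{M}$ is in place, matching numbers of simple modules on each side, with the right-hand count obtained by Clifford theory applied to $L \trianglelefteq N$, together with a dimension comparison, promotes the $L$-level equivalence to the desired $N$-level equivalence.

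\textbf{Main obstacle.} The hardest step will be part (iii): rigorously producing the compatible right $\mathcal{O}N$-action on $M$ and confirming that the extended bimodule still realises a Morita equivalence rather than a weaker equivalence. In particular the interaction of the involutions $T_i$ with the unipotent subgroups $U_i$ and with the idempotents $b_i$ must be verified case-by-case across $U_n(q)$, $CSp_{2n}(q)$, $SO_{2n+1}(q)$ and $CSO^\pm_{2n}(q)$, taking account of the centre factor $Z_p$ in the conformal groups. Part (ii) is computationally heavy but reduces to a routine application of the branching rules once the combinatorial gap condition on $\rho$ is invoked.
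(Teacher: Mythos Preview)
There is a genuine gap in part~(ii). The claim that each Harish--Chandra step $\mathcal{O}G_{i-1}b_{i-1}\sim\mathcal{O}G_ib_i$ is a Morita equivalence is false: the induced functor does not give a bijection on ordinary irreducibles. Concretely, with the gap condition on $\rho$ in force, Lemma~\ref{lem:aba} shows that
\[
R_{G_i,b_i}^{G_{i-1},b_{i-1}}\bigl(\chi_{1,(\alpha+1,1^{d-\alpha-1})}\otimes\chi_{1,\tau}\bigr)
\]
is the sum of all $\chi_{1,\lambda}$ with $\lambda$ obtained from $\tau$ by sliding a bead one place down the $\alpha$th \emph{or} the $(\alpha+d)$th runner (respectively the $2\alpha$th or $(2\alpha+1)$th runner for $U_n(q)$). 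In general both moves are available, so the induced character is not irreducible. This is exactly where the classical-group situation diverges from the Chuang--Kessar symmetric-group argument: the two-fold branching contributes the factor $2^w$ to the endomorphism ring. Consequently $\operatorname{End}_{\mathcal{O}G}(Y)$ has $\mathcal{O}$-rank $2^ww!\dim_K(KLf)$, not $\dim_K(KLf)$, and $Y$ does \emph{not} realise a Morita equivalence between $\mathcal{O}Gb$ and $\mathcal{O}Lf$. Your Littlewood--Richardson coefficients do not sum to $1$.

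The paper avoids this entirely. It does not attempt to extend $Y$ to an $N$-bimodule; instead it defines the Morita bimodule $X$ abstractly as the Green correspondent of $\mathcal{O}Gb$ in $G\times N$, shows $X$ is a progenerator for $\mathcal{O}Gb$, and then squeezes: there are $\mathcal{O}$-split algebra monomorphisms
\[
\mathcal{O}Nf\hookrightarrow\operatorname{End}_{\mathcal{O}G}(X)\hookrightarrow\operatorname{End}_{\mathcal{O}G}(Y),
\]
the first from the right $N$-action on $X$, the second because $X$ is a summand of $Y$ as a $(G\times L)$-module (proved via the Brauer homomorphism, using $Br_P^G(b_i)=Br_P^G(b)$ and $Br_P^G(U_i^+)=1$). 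The character computation then shows that the two ends have the \emph{same} $\mathcal{O}$-rank, forcing both inclusions to be isomorphisms. Thus $Y$ serves only as an upper bound for the endomorphism ring, and the right $N$-action on $X$ comes for free from Green correspondence rather than by any explicit construction. Your part~(iii) is therefore unnecessary, and your part~(ii) must be replaced by this rank computation rather than a stepwise Morita claim. The paper also proves directly that $\mathcal{O}Nf$ is a block (Lemma~\ref{lem:omn}(3), via a vertex argument on $\mathcal{O}Nf$ as an $\mathcal{O}(N\times L)$-module), not a posteriori.
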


For the the proof of this theorem, which will fill this section,
we follow W. Turner\cite[Section 2]{turner2002}. We will need a
number of lemmas first.

\begin{lem}\label{lem:omn}$ $
\begin{enumerate}
\item $P$ is defect group for $\mathcal{O}G_ib_i$ for $(0\leq i\leq w)$.
\item $Br_P^G(b_i)=a'_1\otimes\dots\otimes a'_w\otimes f_0$ where $a'_i$ is the
principal block idempotent of $C_{GL_d(q)_i}(P_i)$ $(0\leq i\leq w)$. Also
$Br_P^G(U_i^+)=1$.
\item $N$ stabilizes $f$ and as an $\mathcal{O}(N\times L)$-module,
$\mathcal{O}Nf$ is indecomposable with vertex $\Delta(P)$. In particular,
$\mathcal{O}Nf$ is a block of $N$.
\item $\mathcal{O}Gb$ and $\mathcal{O}Nf$ both have defect group $P$ and are
Brauer correspondents.
\end{enumerate}
\end{lem}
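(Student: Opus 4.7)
The plan is to prove (1)--(4) in order. Parts (1) and (2) are local calculations on block idempotents and Harish-Chandra idempotents; (3) combines these with the structure of $N$ as an extension of $L$ normalising $f$; and (4) follows immediately from (1)--(3).

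For (1), unpack $b_i=a_1\otimes\cdots\otimes a_i\otimes f_{w-i}$ in $\mathcal{O}G_i$. Each principal block $a_j$ of $\mathcal{O}GL_d(q)_j$ has $P_j\cong R$ (a Sylow $p$-subgroup) as defect group. The block $f_{w-i}$ of $\mathcal{O}G_{m-2di}(q)$ is labelled by a pair $(\mathbf{K},\psi)$ with $\mathbf{K}^F\cong GL_1(q^d)^{w-i}\times G_{m-2dw}(q)$, so by Section~\ref{sec:defect} its defect group is a Sylow $p$-subgroup of the centre of $\mathbf{K}^F$, namely $Z_p\cdot(P_{i+1}\times\cdots\times P_w)$. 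Tensoring these gives $P$.

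For (2), the Brauer homomorphism is multiplicative on tensor products of idempotents in direct products of groups, so I compute factor by factor. For each $a_j$, $Br_{P_j}^{GL_d(q)_j}(a_j)=a'_j$ because the Brauer correspondent of the principal block is the principal block of the centraliser. Iterating the same argument on $f_{w-i}$ (viewed as the unipotent block $(\mathbf{K},\psi)$) yields $Br_{Z_p\cdot(P_{i+1}\times\cdots\times P_w)}^{G_{m-2di}(q)}(f_{w-i})=a'_{i+1}\otimes\cdots\otimes a'_w\otimes f_0$, where $f_0$ is its own Brauer image because its defect group $Z_p$ is central in $G_{m-2dw}(q)$. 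Combining gives the stated formula. For $Br_P^G(U_i^+)=1$, note first that $P$ normalises $U_i$ (conjugation by $P$ preserves the block shape of $U_i$), and that $C_{U_i}(P)=\{1\}$ by the fixed-point-free action of $P_i$ on the off-diagonal $*$-entries of $U_i$. The explicit formula for the Brauer map then gives $Br_P^G(U_i^+)=|U_i|^{-1}\cdot 1$ in $kC_G(P)$; since $|U_i|$ is a power of $q$ of exponent divisible by $d$ and $q^d\equiv 1\pmod{p}$, we get $|U_i|^{-1}\equiv 1\pmod{p}$.

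For (3), $f=a_1\otimes\cdots\otimes a_w\otimes f_0$ is $N$-stable: each $T_j$ fixes the principal block $a_j$, $S$ permutes the symmetric tensor $a_1\otimes\cdots\otimes a_w$, and $\widetilde{G}_{m-2dw}(q)$ fixes the non-degenerate block $f_0$. As an $\mathcal{O}(L\times L)$-bimodule,
\[ \mathcal{O}Nf\;\cong\;\bigoplus_{g\in N/L}g\cdot\mathcal{O}Lf, \]
where each summand is an indecomposable block bimodule (a twist of $\mathcal{O}Lf$) with vertex conjugate to $\Delta(P)$ and trivial source. The left $N$-action on $\mathcal{O}Nf$ permutes these summands transitively via left multiplication of coset representatives, so any $\mathcal{O}(N\times L)$-summand of $\mathcal{O}Nf$ is a union of $N/L$-orbits and must be $0$ or all of $\mathcal{O}Nf$. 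Hence $\mathcal{O}Nf$ is indecomposable with vertex $\Delta(P)$, and in particular is a single block of $\mathcal{O}N$.

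For (4), $\mathcal{O}Gb=\mathcal{O}G_0b_0$ has defect group $P$ by (1) with $i=0$, and $\mathcal{O}Nf$ has defect group $P$ by (3). They are Brauer correspondents because their Brauer images match: $Br_P^G(b)=a'_1\otimes\cdots\otimes a'_w\otimes f_0$ by (2), and for $f\in\mathcal{O}L\subseteq\mathcal{O}N$ one directly has $Br_P^N(f)=Br_P^L(f)=a'_1\otimes\cdots\otimes a'_w\otimes f_0$ by the same factor-wise computation (since $C_N(P)\cap L=C_G(P)\cap L=C_L(P)$, all three Brauer images live in $kC_L(P)$ and coincide). The main obstacle is the $Br_P^G(U_i^+)=1$ calculation in (2), which requires exploiting the linear-prime hypothesis $q^d\equiv 1\pmod{p}$ together with the explicit $P$-action on the unipotent radical $U_i$; part (3) then follows cleanly once the $L\times L$-bimodule decomposition is identified.
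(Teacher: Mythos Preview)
Your argument tracks the paper's proof closely for parts (1), (2), and (4), but part (3) has a genuine gap in the indecomposability argument, and part (2) contains an unjustified (and unnecessary) claim.

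\textbf{Part (3).} You write that any $\mathcal{O}(N\times L)$-summand of $\mathcal{O}Nf$ ``is a union of $N/L$-orbits'' among the summands $g\cdot\mathcal{O}Lf$. This implicitly assumes that every $\mathcal{O}(L\times L)$-direct summand of $\mathcal{O}Nf$ is a sum of \emph{these particular} submodules. By Krull--Schmidt that is only forced when the $g\cdot\mathcal{O}Lf$ are pairwise non-isomorphic as $\mathcal{O}(L\times L)$-modules; otherwise there may be $L\times L$-summands cutting across the coset pieces, and transitivity of the left $N$-action on coset representatives says nothing about them. The paper supplies exactly this missing step: it observes that $C_G(P)\le L$, so for $(g,1)\in (N\times L)\setminus(L\times L)$ the vertex ${}^{(g,1)}\Delta(P)$ is not $L\times L$-conjugate to $\Delta(P)$, hence ${}^{(g,1)}(\mathcal{O}Lf)\not\cong\mathcal{O}Lf$. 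Equivalently, the stabiliser of $\mathcal{O}Lf$ in $N\times L$ is exactly $L\times L$, from which indecomposability of the induced module follows. Without this vertex argument your step does not go through.

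\textbf{Part (2).} Your claim that $|U_i|$ is ``a power of $q$ of exponent divisible by $d$'' is not justified and is not literally true in all cases (for $U_n(q)$ with $q=q_0^2$ one finds $|U_i|$ is a power of $q_0$, and when $d$ is odd the exponent is odd). Fortunately the claim is unnecessary: once you know $C_{U_i}(P)=\{1\}$, the congruence $|U_i|\equiv |C_{U_i}(P)|=1\pmod p$ is immediate from the class equation for the $p$-group $P$ acting on the $p'$-group $U_i$. So keep the fixed-point argument and drop the divisibility claim. Also, your justification ``fixed-point-free action of $P_i$ on the off-diagonal $*$-entries'' deserves more care on the $d\times d$ block (the ``$B$-part'' of $U_i$), where the action is $B\mapsto gB(g')^{-1}$ rather than one-sided multiplication; this is where the linear-prime hypothesis is genuinely used. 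Finally, your computation of $Br_P^G(f_{w-i})$ by ``iterating'' skips real content: the paper invokes results of Brou\'e--Michel and Cabanes--Enguehard to pin down the image and then argues uniqueness via the multiplicativity of $Br_P$ on block idempotents with the same defect group.
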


\begin{proof}$ $
\begin{enumerate}
\item $P_i$ is defect group for $\mathcal{O}GL_d(q)_ia_i$ and
$(P_{i+1}\times\dots\times P_w).Z_p$ is a defect group for
$\mathcal{O}G_{m-2di}(q)f_{w-i}$ (see~\ref{sec:defect}).
\item $C_G(P)<G_i$ so $Br_P^G(b_i)=Br_P^{G_i}(b_i)$
    \newline
    $Br_P^{G_i}(b_i)=Br_{P_1}^{GL_d(q)_1}(a_1)\otimes\dots\otimes
Br_{P^i}^{GL_d(q)_i}(a_i)\otimes Br_{(P_{i+1}\times\dots\times
P_w).Z_p}^{GL_d(q)_{i+1}\times\dots\times GL_d(q)_w\times
G_{m-2dw}(q)}(f_{w-i})$
    \newline
    and from\cite[Theorem 3.2]{bromic1989} we see that we must get a block of
the form: $a'_1\otimes\dots\otimes a'_w\otimes\varepsilon$
    \newline
    where $\varepsilon$ is a sum of unipotent block idempotents of $G_{m-2dw}(q)$
of defect zero.
    \newline
    Secondly\cite[Lemma 4.5]{cabeng1994} tells us that $a'_1\otimes\dots\otimes
a'_w\otimes f_0$ must appear as a constituent.
    \newline
    Finally we see that no other block idempotent of $C_G(P)$ can appear as
otherwise we would have two distinct block idempotents $\alpha$ and $\beta$ of
$G_i$ with defect group $P$ and $Br_P^{G_i}(\alpha)Br_P^{G_i}(\beta)\neq0$. This
is of course a contradiction as $Br_P^{G_i}$ is an algebra homomorphism.
    \newline
    The second part is clear.
\item $N$ clearly stabilizes $f$ in all cases except case 4(b).
The only thing to check in this case is that
$CO^\pm_{m-2dw}(q)$ stabilizes $f_0$. This is clear however, by looking at
part (2) and noting that conjugation by $N_G(P)$ commutes with $Br_P^G$.
    \newline
    By part(1), $\mathcal{O}Lf$ has vertex $\Delta(P)$. Since $C_G(P)\leq L$,
the conjugate of $\Delta(P)$ by an element of $N\times L$ outside $L\times L$ is
never conjugate to $\Delta(P)$ in $L\times L$. Consequently, the stabilizer of
$\mathcal{O}Lf$ in $N\times L$ is exactly $L\times L$. So if
$\mathcal{O}Nf=Ind_{L\times L}^{N\times L}(\mathcal{O}Lf)$ were decomposable
then $Ind_{L\times L}^{N\times L}(\mathcal{O}Lf)$ would have an indecomposable
summand, as a $\mathcal{O}(N\times L)$-module, whose restriction to $L\times L$
has every summand without a vertex contained in $\Delta(P)$. Thus this
indecomposable $\mathcal{O}(N\times L)$-module does not have a vertex contained
in $\Delta(P)$. This is of course a contradiction and so $\mathcal{O}Nf$ is
indecomposable as a $\mathcal{O}(N\times L)$-module. Its vertex is clearly
contained in $\Delta (P)$ and its restriction to $L\times L$ has a summand with
vertex $\Delta(P)$. So $\mathcal{O}Nf$ has vertex $\Delta(P)$ as a
$\mathcal{O}(N\times L)$-module.
\item $P$ is a defect group for $\mathcal{O}Gb$ by part (1). Secondly we note
that any $p$-subgroup of $N$ is contained in $L$ ($L\lhd N$ and $p\nmid[N:L]$).
This tells us that $\mathcal{O}Lf$ has the same defect group as $\mathcal{O}Nf$.
So we have that $\mathcal{O}Nf$ has defect group $P$. Finally we have that
$Br_P^N(f)=Br_P^G(b)$ by (2) and so $\mathcal{O}Nf$ is the Brauer correspondent
of $\mathcal{O}Gb$ in $N$.
\end{enumerate}
\end{proof}

By Alperin's description of the Brauer correspondence\cite[Chapter 14, Theorem
2]{alperi1986} the $\mathcal{O}(G\times G)$-module $\mathcal{O}Gb$ and the
$\mathcal{O}(N\times N)$-module $\mathcal{O}Nf$ both have vertex $\Delta(P)$ and
are Green correspondents. Let $X$ be the Green correspondent of $\mathcal{O}Gb$
in $G\times N$. Then $X$ is the unique indecomposable summand of $Res_{G\times
N}^{G\times G}(\mathcal{O}Gb)$ with vertex $\Delta (P)$ and $\mathcal{O}Nf$ is
the unique indecomposable summand of $Res_{N\times N}^{G\times N}(X)$ with
vertex $\Delta (P)$. It is then clear that $bX=X$ and that $Xf\neq0$, and so
$Xf=X$ and $X$ is an $(\mathcal{O}Gb$,$\mathcal{O}Nf)$-bimodule.
\newline
\newline
Let $Y$ be
$_GY_L=\mathcal{O}Gb_0U_1^+b_1\dots U_w^+b_w$, an
$(\mathcal{O}Gb$,$\mathcal{O}Lf)$-bimodule. So the functor
$Y\otimes_{\mathcal{O}L}-$ from $\mathcal{O}L$-mod to $\mathcal{O}G$-mod is
$HCInd_{G_1b_1}^{G_0b_0}\dots HCInd_{G_1b_w}^{G_0b_{w-1}}$.

\begin{prop}\label{prop:spilt}
There is a sequence of $\mathcal{O}$-split monomorphisms of algebras

\begin{align*}
\mathcal{O}Nf\hookrightarrow \operatorname{End}_{\mathcal{O}G}(X)\hookrightarrow \operatorname{End}_{\mathcal{O}G}(Y)
\end{align*}

Also the left $\mathcal{O}Gb$-module $X$ is a progenerator for $\mathcal{O}Gb$.
\end{prop}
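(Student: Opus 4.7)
The plan is to follow W. Turner~\cite{turner2002}. The two monomorphisms will be structural: the first comes from the right $\mathcal{O}Nf$-action on the bimodule $X$, and the second from the realisation of $X$ as a direct summand of $Y$ followed by the standard inclusion-projection embedding of endomorphism rings.

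For the first monomorphism $\mathcal{O}Nf\hookrightarrow\operatorname{End}_{\mathcal{O}G}(X)$, I would exploit the Green correspondence at the bimodule level. By Lemma~\ref{lem:omn}, $X$ is the Green correspondent of $\mathcal{O}Gb$ in $G\times N$ and $\mathcal{O}Nf$ is its Green correspondent in $N\times N$; hence restricting $X$ to an $\mathcal{O}(N\times N)$-module yields $\mathcal{O}Nf$ as a direct summand, with all other summands having vertices that do not contain $\Delta(P)$. Write $\iota\colon \mathcal{O}Nf\hookrightarrow X$ and $\pi\colon X\twoheadrightarrow \mathcal{O}Nf$ for the corresponding $(N,N)$-bimodule inclusion and projection with $\pi\iota=\mathrm{id}$. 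Since $\iota$ and $\pi$ are in particular right $\mathcal{O}N$-module maps, one checks directly that $\phi\mapsto \pi\phi\iota$ splits the structural map $n\mapsto(x\mapsto xn)$ on the $\mathcal{O}$-module level, yielding both injectivity and $\mathcal{O}$-splitness.

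For the second monomorphism $\operatorname{End}_{\mathcal{O}G}(X)\hookrightarrow\operatorname{End}_{\mathcal{O}G}(Y)$, the task reduces to exhibiting $X$ as a direct summand of $Y$ as a left $\mathcal{O}Gb$-module, after which the analogous inclusion-projection construction on the left supplies an $\mathcal{O}$-split algebra embedding of endomorphism rings. Using the description $Y=\mathcal{O}Gb_0 U_1^+ b_1\dots U_w^+ b_w$ as the iterated Harish-Chandra induction bimodule, I would induce $Y$'s right action from $L$ up to $N$, form $Y\otimes_{\mathcal{O}L}\mathcal{O}N$, and apply a Mackey-type decomposition together with vertex analysis to pinpoint $X$ as the unique indecomposable summand on which $N$ acts through $\mathcal{O}Nf$ with bimodule vertex $\Delta(P)$. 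The required left $\mathcal{O}G$-summand structure of $X$ inside $Y$ is then recovered by projecting back along the induction.

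Finally, $X$ is a progenerator for $\mathcal{O}Gb$: it is projective as a left $\mathcal{O}Gb$-module because it is a summand of $\mathcal{O}Gb$ viewed as an $(\mathcal{O}G,\mathcal{O}N)$-bimodule, hence as a left $\mathcal{O}Gb$-module; and every indecomposable projective of $\mathcal{O}Gb$ appears as a summand of $X$, which I would verify by combining the Harish-Chandra character formulas of Section~\ref{sec:har} with the classification of characters of $B_\rho$ in~\ref{sec:charblock} to check that the character of $KX$ meets every simple head of $KGb$. The main obstacle I anticipate is the second step: pinning $X$ down as a summand of $Y$ requires a delicate vertex-and-Mackey analysis which must be checked separately in each of cases (1), (2b), (3), (4b), and which relies on the precise placement of the idempotents $U_i^+$ inside the chain $G=G_0>G_1>\dots>G_w=L$.
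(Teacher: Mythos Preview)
Your treatment of the first monomorphism is correct and matches the paper.

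There are genuine gaps in the other two parts.

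For the second monomorphism, your proposed route through $Y\otimes_{\mathcal{O}L}\mathcal{O}N$ does not close. Even granting that $X$ appears as a $(G,N)$-bimodule summand of $Ind_{G\times L}^{G\times N}(Y)$, as a left $\mathcal{O}G$-module this only says that $X$ is a summand of $Y^{[N:L]}$, and Krull--Schmidt does not let you ``project back'' to a copy of $X$ inside a single $Y$ without multiplicity information you do not have. The paper instead works with $_GX_L$ directly: one first shows that $_GX_L$ is indecomposable with vertex $\Delta(P)$ and is the \emph{unique} summand of $_G(\mathcal{O}Gb)_L$ whose vertex contains $\Delta(P)$. Since $Y=\mathcal{O}Gb\cdot(b_0U_1^+b_1\cdots U_w^+b_w)$ is cut from $_G(\mathcal{O}Gb)_L$ by an idempotent of $(\mathcal{O}G)^L$, it then suffices to check that $Y$ retains every summand of $_G(\mathcal{O}Gb)_L$ with vertex containing $\Delta(P)$. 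This is done by a single Brauer-quotient calculation,
\begin{align*}
Y(\Delta(P)) &= kC_G(P)\,Br_P^G(b_0)Br_P^G(U_1^+)\cdots Br_P^G(U_w^+)Br_P^G(b_w)\\
&= kC_G(P)\,Br_P^G(b) = (\mathcal{O}Gb)(\Delta(P)),
\end{align*}
using Lemma~\ref{lem:omn}(2) that all the $Br_P^G(b_i)$ coincide and each $Br_P^G(U_i^+)=1$. This is the missing idea, and it is completely uniform across cases (1), (2b), (3), (4b); the case-by-case Mackey analysis you anticipate is not needed.

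For the progenerator claim, your character check is unnecessarily hard: $X$ is defined via Green correspondence, not Harish-Chandra induction, so its $K$-character is not directly accessible from the formulas in Section~\ref{sec:har}. The paper's argument is a one-liner from Green correspondence itself: $_G\mathcal{O}Gb_G$ is a summand of $Ind_{G\times N}^{G\times G}(_GX_N)$, so as a left module $\mathcal{O}Gb$ is a summand of $X^{[G:N]}$, whence $X$ is a progenerator.
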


\begin{proof}
$_G\mathcal{O}Gb_G$ is isomorphic to a direct summand of $Ind_{G\times
N}^{G\times G}(_GX_N)$ as they are Green correspondents. Thus $_G\mathcal{O}Gb$
is a direct summand of $[G:N]$ copies of $_GX$ and $_GX$ is a progenerator for
$\mathcal{O}Gb$.
\newline
\newline
Now there is an $\mathcal{O}$-split homomorphism of algebras
$\mathcal{O}Nf\rightarrow \operatorname{End}_{\mathcal{O}G}(X)$ given by multiplying on the
right of $X$. Since $_N\mathcal{O}Nf_N$ is a direct summand of $Res_{N\times
N}^{G\times N}(_GX_N)$ this homomorphism is an $\mathcal{O}$-split monomorphism.
\newline
\newline
Next $Res_{G\times L}^{G\times N}(_GX_N)$ is indecomposable with vertex $\Delta
(P)$. First, $G\times L$ contains $\Delta (P)$ so $_GX_N$ is a direct summand of
$Ind_{G\times L}^{G\times N}(Res_{G\times L}^{G\times N}(_GX_N))$. Since
$(G\times L)\unlhd (G\times N)$ there exists an indecomposable summand $M$ of
$Res_{G\times L}^{G\times N}(_GX_N)$ such that $Res_{G\times L}^{G\times
N}(_GX_N)$ is a direct sum of conjugates of $M$ in $G\times N$. It is possible
to pick a set of coset representatives of $(G\times L)$ in $(G\times N)$ that
all normalise $\Delta (P)$. So $Res_{G\times L}^{G\times N}(_GX_N)$ is the sum
of indecomposable modules all with vertex $\Delta (P)$.
\newline
Secondly, $Res_{G\times L}^{G\times
N}(_GX_N)=_GX_N\otimes_{\mathcal{O}N}\mathcal{O}Nf_L$ is a direct summand of
$Ind_{N\times L}^{G\times L}(\mathcal{O}Nf)$ which by Green correspondence has
exactly one summand with vertex $\Delta (P)$. So $_GX_L$ is indecomposable with
vertex $\Delta (P)$.
\newline
\newline
Now we claim that $_GX_L$ is the only summand of $Res_{G\times L}^{G\times
G}(_G\mathcal{O}Gb_G)$ with vertex containing $\Delta (P)$. In a direct
decomposition of $Res_{G\times N}^{G\times G}(_G\mathcal{O}Gb_G)$ every summand
is either $_GX_N$, has a vertex strictly smaller than $\Delta (P)$ or has a
vertex that is conjugate to $\Delta (P)$ in $(G\times G)$ but not in $(G\times
N)$. So when we restrict down to $(G\times L)$ every summand is either $_GX_L$,
has a vertex strictly smaller than $\Delta (P)$ in size or has a vertex that is
conjugate to $\Delta (P)$ in $(G\times G)$ but not in $(G\times N)$ so certainly
not in $(G\times L)$. So $_GX_L$ is the only summand of $Res_{G\times
L}^{G\times G}(_G\mathcal{O}Gb_G)$ with vertex containing $\Delta (P)$.
\newline
\newline
Let $_GY_L=\mathcal{O}Gb_0U_1^+b_1\dots U_w^+b_w$. Each $b_i$ and each $U_i^+$
is an idempotent in $(\mathcal{O}G)^L$. These idempotents commute with each
other and so their product is an idempotent contained in $(\mathcal{O}G)^L$.
Hence, $_GY_L$ is also a direct summand of $_G\mathcal{O}Gb_L$. We show that it
has as direct summands all summands of $_G\mathcal{O}Gb_L$ with vertex
containing $\Delta (P)$, using the Brauer homomorphism. This tells us
$_GY_L=_GX_L\oplus *$ and consequently we have an $\mathcal{O}$-split
monomorphism $\operatorname{End}_{\mathcal{O}G}(X)\hookrightarrow \operatorname{End}_{\mathcal{O}G}(Y)$. The
calculation goes:

\begin{align*}
Y(\Delta (P))&=\mathcal{O}Gb_0U_1^+b_1\dots U_w^+b_w(\Delta (P))\\
&=kC_G((P))Br_{P}^G(b_0U_1^+b_1\dots U_w^+b_w)\\
&=kC_G((P))Br_{P}^G(b_0)Br_{P}^G(U_1^+)Br_{P}^G(b_1)\dots
Br_{P}^G(U_w^+)Br_{P}^G(b_w)\\
&=kC_G((P))Br_{P}^G(b_0)Br_{P}^G(b_1)\dots Br_{P}^G(b_w)\\
&=kC_G((P))Br_{P}^G(b)\\
&=\mathcal{O}Gb(\Delta (P))\\
\end{align*}

So $Y$ has indeed as summands all summands of $_G\mathcal{O}Gb_L$ with vertex
containing $\Delta (P)$.
\end{proof}

Let $\varphi$ be the character of $KLf$ as a representation of $L$.

\begin{prop}\label{prop:orank}
The $\mathcal{O}$-rank of $\mathcal{O}Nf$ and $\operatorname{End}_{\mathcal{O}G}(Y)$
are both equal to
\newline
$2^ww!\dim_K(KLf)$.
\end{prop}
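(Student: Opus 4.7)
My plan is to compute each of the two ranks separately and see both of them reduce to the index $[N:L]=2^ww!$.

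For $\mathcal{O}Nf$ I would first observe that $L\unlhd N$: each $GL_d(q)_i$ is normalised by $T_i$, and $S$ permutes the factors $GL_d(q)_1,\dots,GL_d(q)_w$, so $H.S$ normalises $L$ and hence so does $N=H.S\cap G$. Choosing left coset representatives $g_1,\dots,g_{2^ww!}$ of $L$ in $N$ gives
\begin{align*}
\mathcal{O}Nf=\bigoplus_{i=1}^{2^ww!}g_i\mathcal{O}Lf
\end{align*}
as an $\mathcal{O}$-module. Since $f$ is an idempotent of $\mathcal{O}L$, the module $\mathcal{O}Lf$ is an $\mathcal{O}$-direct summand of $\mathcal{O}L$ and hence is $\mathcal{O}$-free of rank $\dim_K KLf$; summing over cosets gives the claim for $\mathcal{O}Nf$.

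For $\operatorname{End}_{\mathcal{O}G}(Y)$ I would pass to $K$: as $Y$ is a direct summand of $\mathcal{O}Gb$ it is $\mathcal{O}$-free, so its $\mathcal{O}$-rank equals $\dim_K\operatorname{End}_{KG}(KY)$. By the definition of $Y$ and transitivity of Harish-Chandra induction along $G=G_0>G_1>\dots>G_w=L$, the functor $Y\otimes_{\mathcal{O}L}-$ is the iterated $HCInd_{L,f}^{G,b}$; applying it to $\mathcal{O}Lf$ returns $Y$ itself, so $KY\cong HCInd_L^G(KLf)$ as a left $KG$-module. Frobenius reciprocity then gives
\begin{align*}
\dim_K\operatorname{End}_{KG}(KY)=\dim_K\operatorname{Hom}_{KL}(KLf,HCRes_L^G HCInd_L^G(KLf)).
\end{align*}
I would then apply the Mackey formula for Harish-Chandra induction. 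For $L$ a Levi of an $F$-stable parabolic of $\mathbf{G}$, the double coset representatives that contribute are parametrised by the relative Weyl group $W_G(L)=N_G(L)/L$, and because $L\cap {}^wL=L$ for every $w\in W_G(L)$ the formula collapses to
\begin{align*}
HCRes_L^G HCInd_L^G(KLf)\cong\bigoplus_{w\in N_G(L)/L}{}^w(KLf).
\end{align*}
The explicit construction of $N$ out of $S$ and the twist subgroups $T_i$ realises the hyperoctahedral group of order $2^ww!$ inside $N_G(L)/L$, and this matches the known value of $|W_G(L)|$ for a Levi of shape $GL_d(q)^w\times G_{m-2dw}(q)$ in each of our classical families, so $N=N_G(L)$. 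Since $N$ stabilises $f$ by Lemma~\ref{lem:omn}(3), each conjugate ${}^w(KLf)$ is isomorphic to $KLf$ as a left $KL$-module, so every Hom-space contributes $\dim_K\operatorname{End}_{KL}(KLf)=\dim_K KLf$ (using semisimplicity of $KL$), and summing over the $2^ww!$ Weyl group elements delivers the equality.

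The main obstacle I expect is the clean invocation of Mackey's formula together with the identification $N=N_G(L)$ uniformly across the four families of classical groups: this requires unwinding the embeddings described in section~\ref{sec:levi} and matching the specific twist elements $s\in T_i$ against the standard generators of the hyperoctahedral Weyl group in each subcase. Once that identification is in hand, the remaining manipulations are formal consequences of semisimplicity and the adjunction between $HCInd$ and $HCRes$.
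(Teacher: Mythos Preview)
Your computation of the $\mathcal{O}$-rank of $\mathcal{O}Nf$ is fine and agrees with the paper. The argument for $\operatorname{End}_{\mathcal{O}G}(Y)$, however, contains two genuine errors that happen to cancel numerically.

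First, the identification $KY\cong HCInd_L^G(KLf)$ is false. The bimodule $Y$ carries the block idempotents $b_0,b_1,\dots,b_w$, and already the outermost cut by $b_0=b$ is non-trivial: Harish--Chandra induction from $L$ does \emph{not} stay inside the block $B_\rho$. For instance in case~2(b) with $d=e=2$, $w=1$ and $\rho$ the symbol $\{(1),\emptyset\}$ (rank $1$, defect $1$), inducing $\chi_{1,(2)}\otimes\chi_{1,\rho}$ from $GL_2(q)\times CSp_2(q)$ to $CSp_6(q)$ produces among its constituents $\chi_{1,\{(2,1),\emptyset\}}$; since $(2,1)$ is a $2$-core, this character has $e$-core $\{(2,1),\emptyset\}\neq\rho$ and lies outside $b$. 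Hence $KY$ is a proper summand of $HCInd_L^G(KLf)$.

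Second, the Mackey decomposition of $HCRes_L^G\circ HCInd_L^G$ is indexed by the double cosets $W_L\backslash W_G/W_L$, not by $N_G(L)/L$. For a Levi of shape $GL_d(q)^w\times G_{m-2dw}(q)$ there are strictly more such double cosets than $|N_G(L)/L|=2^ww!$, and the terms with $L\cap{}^xL\subsetneq L$ genuinely contribute. A correct Mackey computation therefore gives $\dim_K\operatorname{End}_{KG}\bigl(HCInd_L^G(KLf)\bigr)>2^ww!\dim_K(KLf)$, which is consistent with the strict containment above but tells you nothing about $\dim_K\operatorname{End}_{KG}(KY)$.

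Note that your argument never invokes the Rouquier hypothesis on $\rho$, which should be a warning: the proposition is specific to Rouquier cores. The paper instead computes the iterated \emph{truncated} induction $R_{G_1,b_1}^{G_0,b_0}\cdots R_{G_w,b_w}^{G_{w-1},b_{w-1}}(\varphi)$ explicitly on characters, using Lemma~\ref{lem:aba} (this is exactly where the Rouquier condition enters) to identify each step with sliding beads on the fixed $2d$-abacus; the endomorphism dimension then reduces to a combinatorial identity in multinomial coefficients and symmetric-group character degrees.
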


Before we prove the above proposition we state the following lemma of Chuang and
Kessar\cite[Lemma 4.2]{chukes2002} with out proof.

\begin{lem}\label{lem:aba}
Let $d$ be a positive integer and $\sigma$ a partition equal to its own $d$-core
with a $d$-abacus representation such that the $i$th runner has at least $w-1$
fewer beads than the $(i+1)$th runner for $0\leq i\leq d-2$ and fix this abacus
representation of this core. Let $\lambda$ be a partition with $d$-core $\sigma$
and weight $v\leq w$. Let $\mu$ be a partition such that $\mu_i\leq\lambda_i$
for all $i$ with $d$-core $\sigma$ and weight $v-1$. Then $\mu$ is obtained by
removing a $d$-hook from $\lambda$. If this removal occurs on the $\alpha$th
runner then the complement of the Young diagram of $\mu$ in that of $\lambda$ is
the Young diagram of the hook partition $(\alpha+1,1^{(d-\alpha-1)})$.
\end{lem}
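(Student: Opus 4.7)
The natural framework is the $d$-abacus (see~\ref{sec:hook}). Place $\sigma$ in its canonical representation, so that runner $j$ carries $b_j$ beads at rows $0,1,\dots,b_j-1$, in which the hypothesis becomes $b_{j+1}\geq b_j+w-1$. Since $\mu$ and $\lambda$ share the $d$-core $\sigma$, each runner carries the same number of beads in both abaci; only the vertical positions differ. Write $(\lambda^0,\dots,\lambda^{d-1})$ and $(\mu^0,\dots,\mu^{d-1})$ for the $d$-quotients, so that the total downward displacement on runner $j$ equals $|\lambda^j|$ (respectively $|\mu^j|$) and $\sum_j|\lambda^j|=v$, $\sum_j|\mu^j|=v-1$. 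Each individual bead is displaced by at most the weight, so under $v\leq w$ no bead moves more than $w$ rows from its $\sigma$-position.

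The crux is the \emph{per-runner containment claim}: $\mu\subseteq\lambda$ as partitions implies $\mu^j\subseteq\lambda^j$ as partitions for every $j$. Granted this, $\sum_j(|\lambda^j|-|\mu^j|)=1$ together with nonnegativity of each summand forces exactly one runner $\alpha$ to account for the unit of weight, with $\lambda^j=\mu^j$ for $j\neq\alpha$ and $\mu^\alpha\subseteq\lambda^\alpha$ differing by one box. A size-one containment of partitions deletes a corner box, which on the $d$-abacus corresponds to moving a single bead on runner $\alpha$ one row up, i.e.\ removing a single $d$-hook from $\lambda$ on runner $\alpha$, yielding $\mu$.

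To identify the shape, suppose the moved bead sits in row $r+1$ of runner $\alpha$ and moves to row $r$. By the standard abacus/rim-hook dictionary, the leg length of the rim $d$-hook equals the number of occupied intermediate bead positions, namely beads at row $r$ on runners $\alpha+1,\dots,d-1$ together with beads at row $r+1$ on runners $0,\dots,\alpha-1$. Using $b_{j+1}\geq b_j+w-1$ together with $v\leq w$ and the displacement bound, every position of the first type is occupied and every position of the second type is unoccupied in $\lambda$. Hence the leg length is exactly $d-1-\alpha$ and the arm length $\alpha$, giving the hook partition $(\alpha+1,1^{d-\alpha-1})$ as claimed.

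The principal obstacle is the per-runner containment claim. The condition $\mu\subseteq\lambda$ is equivalent to pointwise inequality of the sorted, padded beta-sets, but the residues modulo $d$ of corresponding entries need not agree, so the inequality does not descend automatically to runners. The hypothesis is indispensable: the separation $b_{j+1}-b_j\geq w-1$ combined with $v\leq w$ confines the $i$-th bead on runner $j$ in either partition to the narrow row range $[i-1,\,i-1+v]$, and a careful case analysis rules out any "overtaking" of a lower-runner bead by a higher-runner bead in the sorted beta-order inconsistent with per-runner comparison. Extracting the per-runner inequality from the global one under this controlled interleaving is the delicate step.
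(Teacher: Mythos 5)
The paper itself does not supply a proof of this lemma: it explicitly says it states the result ``without proof'' and cites Chuang--Kessar's Lemma~4.2, so there is no argument in the paper to compare against. Judged on its own, your attempt identifies the right framework (the $d$-abacus and $d$-quotients, per-runner comparison, then the rim-hook/abacus dictionary for the shape), which is exactly the setting in which the Rouquier hypothesis is meant to be used.

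There is, however, a genuine gap, and you flag it yourself: the per-runner containment claim, namely that $\mu\subseteq\lambda$ descends to $\mu^j\subseteq\lambda^j$ for every runner $j$. You describe the idea (separation of bead counts confines each bead to a narrow band, so interleaving is controlled) and then write ``extracting the per-runner inequality from the global one\dots is the delicate step'' without actually carrying it out. This is not a cosmetic omission — it is the entire content of the lemma. Without the Rouquier separation the conclusion fails outright: for $d=2$, $\lambda=(3,1)$ and $\mu=(2)$ share the empty $2$-core, $|\lambda|-|\mu|=2$, and $\mu\subseteq\lambda$, yet $\lambda/\mu=\{(1,3),(2,1)\}$ is disconnected and not a rim $2$-hook. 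So the separation hypothesis has to enter the argument in a quantitative way, and at present it only enters as a remark.

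The second half has a related but smaller issue. You assert that in $\lambda$ every position of row $r$ on runners $\alpha+1,\dots,d-1$ is occupied and every position of row $r+1$ on runners $0,\dots,\alpha-1$ is empty, ``using $b_{j+1}\geq b_j+w-1$ together with $v\leq w$ and the displacement bound.'' These two facts are true, but they do require an actual inequality chase. For the first, one needs that the runner-$j$ displacement $|\lambda^j|$ is strictly smaller than $b_j-r$, which follows from $|\lambda^j|\leq v-|\lambda^\alpha|$, $|\lambda^\alpha|\geq r-k+2$ (where $k$ is the index of the moved bead), and $b_j\geq b_\alpha+(w-1)\geq k+v-1$. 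For the second, one needs $(b_j-1)+|\lambda^j|\leq r$, which reduces to the elementary inequality $AB\geq A-B+1$ for $A=b_\alpha-k+1\geq1$, $B=r-k+2\geq1$ after using $v\leq w$ and the product lower bound $|\lambda^\alpha|\geq(b_\alpha-k+1)(r-k+2)$. None of this is difficult, but it is not a one-line consequence of ``the displacement bound,'' and since these occupied/empty claims are what converts a rim $d$-hook with prescribed leg and arm into a literal translate of the straight hook $(\alpha+1,1^{d-\alpha-1})$ (a rim hook with the right leg and arm is generally \emph{not} a straight hook — the claim needs the ``all occupied before all empty'' pattern across the intermediate positions), it deserves an explicit argument. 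So: correct strategy, and the conclusions you assert are true, but the two load-bearing claims are stated rather than proved.
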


\begin{proof}(of~\ref{prop:orank})
Since $\mathcal{O}$ is a principal ideal domain, proving the proposition
is equivalent to showing that $K\otimes_\mathcal{O}\mathcal{O}Nf$
and $K\otimes_\mathcal{O}\operatorname{End}_{\mathcal{O}G}(Y)$ have the same dimension over
$K$. Now $K\otimes_\mathcal{O}\mathcal{O}Nf\cong KNf\cong Ind_L^N(KLf)$. So
$\dim_K(K\otimes_\mathcal{O}\mathcal{O}Nf)=2^ww!\dim_K(KLf)$. Also
$K\otimes_\mathcal{O}\operatorname{End}_{\mathcal{O}G}(Y)\cong
\operatorname{End}_{KG}(K\otimes_\mathcal{O}Y)$. Additionally we have
$K\otimes_\mathcal{O}Y\cong(K\otimes_\mathcal{O}Y)\otimes_{KLf}KLf$. Let $\varphi$
be the character of $KLf$ as a character of $L$. We will calculate
$R_{G_1,b_1}^{G_0,b_0}\dots R_{G_w,b_w}^{G_{w-1},b_{w-1}}(\varphi)$.
\newline
\newline
Let $\rho^0$ and $\rho^1$ be the two partitions associated to $\rho$ as
in~\ref{not:char}. Our condition~\ref{cond:rho} on $\rho$ demands that $\rho^0$
and $\rho^1$ both satisfy the conditions of $\sigma$ in~\ref{lem:aba} for 
our $d$ and $w$. We will now explain what this means in terms of Harish-Chandra
induction~\ref{sec:har}:
\newline
\newline
Let $0\leq i\leq w-1$ and $\chi_{1,\tau}$ be a unipotent character of
$G_{m-2d(i+1)}(q)$ such that $\tau$ has $e$-core $\rho$. Then:

\begin{gather*}
R_{GL_d(q)_i\times G_{m-2d(i+1)}(q),a_{i+1}\otimes
f_{w-i-1}}^{G_{m-2di}(q),f_{w-i}}(\chi_{1,(\alpha+1,1^{(d-\alpha-1)})}
\otimes\chi_{1,\tau})
\end{gather*}

is equal to the sum of the $\chi_{1,\lambda}$s where $\lambda$ is a partition
obtained from $\tau$ by sliding a bead $1$ place down the $2\alpha$th or
$(2\alpha+1)$th runner in case 1 and the $\alpha$th or
$(\alpha+d)$th runner in cases 2,3 and 4.
\newline
\newline
Let us count the number of ways of sliding single beads down the $l$th runner of
a core $j$ times, so that on the resulting runner the bottom bead has been moved
down $\sigma_1^l$ times, the second bottom bead has been moved down $\sigma_2^l$
times, etc, so that $\sigma_1^l\geq\sigma_2^l\geq\dots$ and
$\sum_i\sigma_i^l=j$. It is equal to the number of ways of writing the numbers
$1,\dots,j$ in the Young diagram of $\sigma_1^l,\sigma_2^l,\dots$ so that
numbers increase across rows and down columns, that is, the degree of the
character $\zeta^{\sigma^l}$ of the symmetric group $S_j$.
\newline
\newline
The irresducible characters in the block $KLf$ are of the form
$\chi_{s_1,\lambda_1}\otimes\dots\otimes\chi_{s_w,\lambda_w}\otimes\chi_{s,\rho}
$ where either $s_i$ is $1$ and $\lambda_i$ is an $d$-hook partition or $s_i$ is
a non-trivial $p$-element of $GL_d(q)$ and $\lambda_i$ is the partition
$(1)$~\ref{sec:charblock}. Note that $s$ is always $1$ or something whose image
under $\pi$ is $1$ in cases 2(b) and 4(b). In other words,
in the latter case, $s$ is just a $p$-element of the underlying field in the
special Clifford group.
\newline
\newline
We can now describe $R_{G_1,b_1}^{G_0,b_0}\dots
R_{G_w,b_w}^{G_{w-1},b_{w-1}}(\chi_{s_1,{\lambda_1}}\otimes\dots
\otimes\chi_{s_w,{\lambda_w}}\otimes\chi_{s,\rho})$.
Suppose that the $s_i$s are grouped together with like elements such that
$s_1$,\dots,$s_{r_0}$ are all equal to $1$ and the remaining elements are
grouped together into conjugacy classes as follows:

\begin{align*}
&s_{r_0+1}\sim\dots\sim s_{r_0+\alpha_1}\sim t_1,\\
&s_{r_0+\alpha_1+1}\sim\dots\sim s_{r_0+\alpha_1+\beta_1}\sim \overline{t_1},\\
&s_{r_0+\alpha_1+\beta_1+1}\sim\dots\sim s_{r_0+\alpha_1+\beta_1+\alpha_2}\sim
t_2,\\
&s_{r_0+\alpha_1+\beta_1+\alpha_2+1}\sim\dots\sim
s_{r_0+\alpha_1+\beta_1+\alpha_2+\beta_2}\sim\overline{t_2},\\
&\dots
\end{align*}

where $\overline{t}$ means $t^{-1}$ (or $t^{-q_0}$ in case 1).
Additionally set $r_i=\alpha_i+\beta_i$ giving $\sum_ir_i=w$ and:

\begin{align*}
&\lambda_1=\dots=\lambda_{l_0}=(1^d),\\
&\lambda_{l_0+1}=\dots=\lambda_{l_0+l_1}=(2,1^{d-2}),\\
&\dots,\\
&\lambda_{l_0+\dots+l_{d-2}+1}=\dots=\lambda_{l_0+\dots+l_{d-1}}=(d)
\end{align*}

where  $\sum_il_i=r_0$.
\newline
\newline
Now we have an expression for $R_{G_1,b_1}^{G_0,b_0}\dots
R_{G_w,b_w}^{G_{w-1},b_{w-1}}(\chi_{s_1,{\lambda_1}}\otimes\dots\otimes\chi_{s_w
,{\lambda_w}}\otimes\chi_{s,\rho})$:

\begin{align*}
\sum_{\begin{smallmatrix}
l_0+\dots+l_d=r_0\\
|\sigma^i|+|\tau^i|=l_i, |\nu^i|=r_i
\end{smallmatrix}}&{l_0\choose|\sigma^0|}\dim\zeta^{\sigma^0}\dim\zeta^{\tau^0}
\dots{l_{d-1}\choose|\sigma^{d-1}|}\dim\zeta^{\sigma^{d-1}}\dim\zeta^{\tau^{d-1}}
\\
&\dim\zeta^{\nu^1}\dim\zeta^{\nu^2}\dots\\
&\chi(s\times s_1\times \overline{s_1}\times\dots\times s_w\times
\overline{s_w},\mu)
\end{align*}

Where $\mu_{X-1}$ is the partition/symbol whose $e$-core is $\rho$ and whose
$e$-quotients are $[\sigma^0,\dots,\sigma^{d-1}]$ and
$[\tau^0,\dots,\tau^{d-1}]$ (or just
$[\sigma^0,\tau^0,\dots,\sigma^{d-1},\tau^{d-1}]$ in case 1)
with respect to the $2d$-abacus representation of $\rho$ already
fixed~\ref{cond:rho} and $\mu_{\psi_i}=\nu^i$ where $\psi_i$ is the minimal
polynomial of $t_i\times\overline{t_i}$.
\newline
\newline
Now we can permute the $\lambda_i$s and still get the same character of $G$ when
we do Harish-Chandra induction. There are exactly $w!/(l_0!l_1!\dots
l_{d-1}!\alpha_1!\beta_1!\alpha_2!\beta_2!\dots)$ such permutations. Also each
irreducible character appears in $\varphi$ with multiplicity equal to the
dimension of said character. So we have the following expression for
$R_{G_1,b_1}^{G_0,b_0}\dots R_{G_w,b_w}^{G_{w-1},b_{w-1}}(\varphi)$:

\begin{align*}
\sum_{\begin{smallmatrix}
l_0+\dots+l_{d-1}+r_1+r_2+\dots=w \\
\alpha_i+\beta_i=r_i,|\sigma^i|+|\tau^i|=l_i, \kappa
\end{smallmatrix}}&\frac{w!}{l_0!l_1!\dots
l_{d-1}!\alpha_1!\beta_1!\alpha_2!\beta_2!\dots}\\
&\dim(\chi_{s_1,{\lambda_1}}\otimes\dots\otimes\chi_{s_w,{\lambda_w}}\otimes\chi_
{s,\rho})\\
&{l_0\choose|\sigma^0|}\dim\zeta^{\sigma^0}\dim\zeta^{\tau^0}\dots{l_{d-1}
\choose|\sigma^{d-1}|}\dim\zeta^{\sigma^{d-1}}\dim\zeta^{\tau^{d-1}}\\
&\dim\zeta^{\nu^1}\dim\zeta^{\nu^2}\dots\\
&\chi(s\times s_1\times \overline{s_1}\times\dots\times s_w\times
\overline{s_w},\mu)
\end{align*}

Where $s$ is the scalar matrix $\kappa I$. So $\kappa$ runs over
$(\mathbb{F}^\times_q)_p$ in cases 2(b) and 4(b) and just $1$
otherwise. So we get:

\begin{align*}
\sum_{\begin{smallmatrix}
l_0+\dots+l_{d-1}+r_1+r_2+\dots=w \\
|\sigma^i|+|\tau^i|=l_i, \kappa
\end{smallmatrix}}&\frac{w!}{l_0!l_1!\dots l_{d-1}!r_1!r_2!\dots}\\
&{l_0\choose|\sigma^0|}\dim\zeta^{\sigma^0}\dim\zeta^{\tau^0}\dots{l_{d-1}
\choose|\sigma^{d-1}|}\dim\zeta^{\sigma^{d-1}}\dim\zeta^{\tau^{d-1}}\\
&\dim\zeta^{\nu^1}\dim\zeta^{\nu^2}\dots\\
&\chi(s\times s_1\times \overline{s_1}\times\dots\times s_w\times
\overline{s_w},\mu)\\
&[\sum_{\alpha_i+\beta_i=r_i}{r_1\choose\alpha_1}{r_2\choose\alpha_2}\dots
\dim(\chi_{s_1,{\lambda_1}}\otimes\dots\otimes\chi_{s_w,{\lambda_w}}\otimes\chi_{
s,\rho})]
\end{align*}

So the dimension of $\operatorname{End}_{KG}((K\otimes_\mathcal{O}Y)\otimes_KKLf)$ over $K$ is:

\begin{align*}
\sum_{\begin{smallmatrix}
l_0+\dots+r_1+\dots=w \\
|\sigma^i|+|\tau^i|=l_i, \kappa
\end{smallmatrix}}&
(\frac{w!}{l_0!l_1!\dots l_{d-1}!r_1!r_2!\dots})^2\\
&{l_0\choose|\sigma^0|}^2(\dim\zeta^{\sigma^0})^2(\dim\zeta^{\tau^0})^2\dots{l_{
d-1}\choose|\sigma^{d-1}|}^2(\dim\zeta^{\sigma^{d-1}})^2(\dim\zeta^{\tau^{d-1}}
)^2\\
&(\dim\zeta^{\nu^1})^2(\dim\zeta^{\nu^2})^2\dots\\
&[\sum_{\alpha_i+\beta_i=r_i}{r_1\choose\alpha_1}{r_2\choose\alpha_2}\dots
\dim(\chi_{s_1,{\lambda_1}}\otimes\dots\otimes\chi_{s_w,{\lambda_w}}\otimes\chi_{
s,\rho})]^2\\
\end{align*}

\begin{align*}
=\sum_{\begin{smallmatrix}
l_0+\dots+r_1+\dots=w \\
\sigma_i+\tau_i=l_i, \kappa
\end{smallmatrix}}&
(\frac{w!}{l_0!l_1!\dots
l_{d-1}!r_1!r_2!\dots})^2{l_0\choose\sigma_0}^2\dots{l_{d-1}\choose\sigma_{d-1}}
^2\\
&[\sum_{\sigma^i\vdash\sigma_i,\tau^i\vdash\tau_i}(\dim\zeta^{\sigma^0}
)^2(\dim\zeta^{\tau^0})^2\dots(\dim\zeta^{\sigma^{d-1}})^2(\dim\zeta^{\tau^{d-1}}
)^2]\\
&(\dim\zeta^{\nu^1})^2(\dim\zeta^{\nu^2})^2\dots\\
&[\sum_{\alpha_i+\beta_i=r_i}{r_1\choose\alpha_1}{r_2\choose\alpha_2}\dots
\dim(\chi_{s_1,\lambda_1})\dots \dim(\chi_{s_w,\lambda_w})\dim(\chi_{s,\rho})]^2
\end{align*}

Now using the fact that $\sum_{\sigma\vdash h}\dim(\zeta^\sigma)^2=h!$ we get:

\begin{align*}
\sum_{\begin{smallmatrix}
l_0+\dots+r_1+\dots=w \\
\sigma_i+\tau_i=l_i, \kappa
\end{smallmatrix}}&(\frac{w!}{l_0!l_1!\dots
l_{d-1}!r_1!r_2!\dots})^2{l_0\choose\sigma_0}^2\dots{l_{d-1}\choose\sigma_{d-1}}
^2\\
&\sigma_0!\tau_0!\dots\sigma_{d-1}!\tau_{d-1}!r_1!r_2!\dots\\
&[\sum_{\alpha_i+\beta_i=r_i}{r_1\choose\alpha_1}{r_2\choose\alpha_2}\dots
\dim(\chi_{s_1,\lambda_1})\dots \dim(\chi_{s_w,\lambda_w})\dim(\chi_{s,\rho})]^2\\
\end{align*}

Now $\dim(\chi_{t,(1)})=\dim(\chi_{\overline{t},(1)})$~\ref{sec:dim}. This means
that for fixed $(r_1,r_2,\dots)$ the choice of the $\alpha_i$s does not affect
$\dim(\chi_{s_1,\lambda_1})\dots \dim(\chi_{s_w,\lambda_w})\dim(\chi_{s,\rho})$.
Using this and the fact that $\sum_{i=0}^r {r\choose i}=2^r$ we get:

\begin{align*}
w!\sum_{\begin{smallmatrix}
l_0+\dots+r_1+\dots=w \\
\sigma_i+\tau_i=l_i, \kappa
\end{smallmatrix}}&\frac{w!}{l_0!l_1!\dots
l_{d-1}!r_1!r_2!\dots}{l_0\choose\sigma_0}\dots{l_{d-1}\choose\sigma_{d-1}}\\
&2^{\sum
r_i}[\sum_{\alpha_i+\beta_i=r_i}{r_1\choose\alpha_1}{r_2\choose\alpha_2}\dots
\dim(\chi_{s_1,\lambda_1}\otimes\dots\otimes\chi_{s_w,\lambda_w}\otimes\chi_{s,
\rho})^2]\\
\end{align*}

\begin{align*}
=w!\sum_{\begin{smallmatrix}
l_0+\dots+r_1+\dots=w \\
\kappa
\end{smallmatrix}}&\frac{w!}{l_0!l_1!\dots l_{d-1}!r_1!r_2!\dots}\\
&2^{\sum r_i}2^{\sum
l_i}(\sum_{\alpha_i+\beta_i=r_i}{r_1\choose\alpha_1}\dots
\dim(\chi_{s_1,\lambda_1}\otimes\dots\otimes\chi_{s_w,\lambda_w}\otimes\chi_{s,
\rho})^2)\\
\end{align*}

\begin{align*}
=2^ww!\sum_{\begin{smallmatrix}
l_0+\dots+l_{d-1}+\alpha_1+\beta_1+\alpha_2+\beta_2\dots=w \\
\kappa
\end{smallmatrix}}&\frac{w!}{l_0!l_1!\dots
l_{d-1}!\alpha_1!\beta_1!\alpha_2!\beta_2!\dots}\\
&\dim(\chi_{s_1,\lambda_1}\otimes\dots\otimes\chi_{s_w,\lambda_w}\otimes\chi_{s,
\rho})^2\\
\end{align*}

\begin{align*}
=2^ww!\dim_K(KLf)
\end{align*}
\end{proof}

\begin{proof}(of~\ref{thm:nondegen})
By~\ref{prop:orank} $\mathcal{O}Nf$ and $\operatorname{End}_{\mathcal{O}G}(Y)$ have the same
$\mathcal{O}$-rank. As a consequence we have that all the monomorphisms
in~\ref{prop:spilt} become isomorphisms. Therefore, since $X$ is a progenerator
as a left $\mathcal{O}Gb$-module, $_{\mathcal{O}Gb}X_{\mathcal{O}Nf}$ induces
a Morita equivalence between $\mathcal{O}Gb$ and $\mathcal{O}Nf$.
\end{proof}

\subsection{Degenerate Case}

We now assume that $\rho$ is degenerate and restrict out attention to case 4(b).
Again we will later prove the corresponding theorem for case 4(a) using~\ref{lem:so2n}.

\begin{thm}\label{thm:degen}
$\mathcal{O}N(f+f')$ is a block of $N$ and is Morita equivalent to
$\mathcal{O}Gb$.
\end{thm}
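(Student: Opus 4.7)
The plan is to mirror the proof of Theorem~\ref{thm:nondegen}, modified to handle the feature that $b$ now corresponds via Brauer correspondence to the sum $f+f'$ of two block idempotents of $\mathcal{O}L$, arising from the two blocks $f_0,f'_0$ of $\mathcal{O}CSO^+_{m-2dw}(q)$ labelled by the degenerate symbol $\rho$. These two blocks merge into the single block $\mathcal{O}Gb$ under Harish-Chandra induction (by the remark following Lemma~\ref{lem:so2n}), and the proof must verify that they likewise merge into the single block $\mathcal{O}N(f+f')$ of $\mathcal{O}N$.

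First I would establish a degenerate analogue of Lemma~\ref{lem:omn}. The computation of $Br_P^{G_i}(b_i)$ now yields $a'_1\otimes\cdots\otimes a'_w\otimes(f_0+f'_0)$ in place of $a'_1\otimes\cdots\otimes a'_w\otimes f_0$: for $i<w$ the block $f_{w-i}$ of $G_{m-2di}(q)$ is a single block whose Brauer image in the centraliser of $P$ splits into the two $N_{G_{m-2di}(q)}(P)$-conjugate blocks $f_0$ and $f'_0$, while for $i=w$ one simply has $Br_P^L(f+f')=a'_1\otimes\cdots\otimes a'_w\otimes(f_0+f'_0)$. The new ingredient is to exhibit an element of $N$ whose conjugation on $\mathcal{O}L$ swaps $f$ and $f'$. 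Such an element is constructed inside $H.S$ by combining an element of $CO^+_{m-2dw}(q)\setminus CSO^+_{m-2dw}(q)$ --- whose conjugation realises the outer automorphism of $CSO^+_{m-2dw}(q)$ interchanging $f_0$ and $f'_0$ --- with suitable $T_i$-elements and $S_w$-factors so that the scalar/determinant condition for membership in $G=CSO^+_m(q)$ is satisfied; a parity count shows this is always possible since $N$ has index $2$ in $H.S$. Consequently $[N:N_f]=2$, so $f+f'$ is a central idempotent of $\mathcal{O}N$. Applying the vertex argument of Lemma~\ref{lem:omn}(3) to $\mathcal{O}N_f f$ shows it is indecomposable as an $\mathcal{O}(N_f\times L)$-module with vertex $\Delta(P)$, whence $\mathcal{O}N(f+f')$ is a single block of $\mathcal{O}N$ with defect group $P$, and the Brauer-image calculation identifies it as the Brauer correspondent of $\mathcal{O}Gb$.

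Let $X$ denote the Green correspondent of $\mathcal{O}Gb$ in $G\times N$, now an $(\mathcal{O}Gb,\mathcal{O}N(f+f'))$-bimodule, and set $Y=\mathcal{O}Gb_0U_1^+b_1\cdots U_w^+(f+f')$. The arguments of Proposition~\ref{prop:spilt} carry through verbatim once the analogue of Lemma~\ref{lem:omn} is in hand: the critical Brauer-image identity $Y(\Delta P)=\mathcal{O}Gb(\Delta P)$ uses only that $b_0U_1^+b_1\cdots U_w^+(f+f')$ is an idempotent in $(\mathcal{O}G)^L$ with Brauer image equal to $Br_P^G(b)$. This yields $\mathcal{O}$-split monomorphisms $\mathcal{O}N(f+f')\hookrightarrow\operatorname{End}_{\mathcal{O}G}(X)\hookrightarrow\operatorname{End}_{\mathcal{O}G}(Y)$, with $X$ a progenerator for $\mathcal{O}Gb$.

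The main obstacle is the dimension matching, the degenerate analogue of Proposition~\ref{prop:orank}. On one side, $\operatorname{rank}_{\mathcal{O}}\mathcal{O}N(f+f')=[N:L]\dim_K KL(f+f')=2^{w+1}w!\dim_K KLf$. On the other, writing the character of $KL(f+f')$ as $\varphi_f+\varphi_{f'}$, one redoes the Harish-Chandra computation of Proposition~\ref{prop:orank} tracking both origins. Using part~(3) of the lemma in Section~\ref{sec:dim} to equate the dimensions of degenerate twin characters, together with the branching rules for $\widetilde{W}_n$ of Section~\ref{sec:weyl} --- which describe how induction interchanges degenerate and non-degenerate pairs --- one verifies that each non-degenerate irreducible character $\chi_{t,\lambda}$ of $\mathcal{O}Gb$ receives equal contributions from $\varphi_f$ and $\varphi_{f'}$, while each degenerate twin pair $\chi_{t,\lambda},\chi'_{t,\lambda}$ of $\mathcal{O}Gb$ receives a single contribution from exactly one of the two sides. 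A careful accounting of these cases in the sum-of-squares formula for $\dim_K\operatorname{End}_{KG}(K\otimes Y)$ produces the required extra factor of $2$ compared with the non-degenerate calculation, matching $\operatorname{rank}_{\mathcal{O}}\mathcal{O}N(f+f')$. The monomorphisms are then isomorphisms and $X$ induces the Morita equivalence between $\mathcal{O}Gb$ and $\mathcal{O}N(f+f')$.
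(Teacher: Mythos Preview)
Your overall architecture coincides with the paper's: an analogue of Lemma~\ref{lem:omn} (with $Br_P^G(b_i)=a'_1\otimes\cdots\otimes a'_w\otimes(f_0+f'_0)$ and $f,f'$ conjugate in $N$), the split monomorphisms $\mathcal{O}N(f+f')\hookrightarrow\operatorname{End}_{\mathcal{O}G}(X)\hookrightarrow\operatorname{End}_{\mathcal{O}G}(Y)$ with $Y=\mathcal{O}Gb_0U_1^+b_1\cdots U_w^+(f+f')$, and a rank comparison. Your target rank $2^{w+1}w!\dim_K KLf$ is correct (indeed the statement of Proposition~\ref{prop:orank2} in the paper is off by a factor of $2$, though the computation itself reaches $2^{w+1}w!\sum\dim(\cdots)^2$).

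There is, however, a genuine error in your proposed combinatorics for the rank. You claim that each degenerate twin pair in $\mathcal{O}Gb$ receives a contribution from exactly one of $\varphi_f,\varphi_{f'}$. This is false: branching rule (3) in Section~\ref{sec:weyl} asserts that $\chi^{\{\alpha,\alpha\}}$ and $\chi'^{\{\alpha,\alpha\}}$ have \emph{identical} inductions to every non-degenerate target, and adding a single $d$-hook to the degenerate core $\rho$ always produces a non-degenerate symbol. Hence already after the first step $R_{G_w}^{G_{w-1}}$ the images of $\varphi_f$ and $\varphi_{f'}$ agree, so $R(\varphi_f)=R(\varphi_{f'})$ as characters of $G$ and both contribute equally to \emph{every} constituent. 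Your proposed ``complementary splitting'' does not occur.

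The paper obtains the extra factor of $2$ by a different observation: it works with $\varphi=\varphi_f+\varphi_{f'}$ directly and notes that in the parametrisation of constituents by quotient data $(\sigma^i,\tau^i)$, the swap $\sigma^i\leftrightarrow\tau^i$ yields the same character of $G$ (because $\rho$ is degenerate), with degenerate constituents interpreted as $\chi+\chi'$. This symmetry, combined with $\dim\chi_{s,\rho}=\dim\chi'_{s,\rho}$, is what produces the leading $2$ in the sum-of-squares. If you want to salvage the $f/f'$ separation, you would instead use $R(\varphi)=2R(\varphi_f)$ to get $\dim\operatorname{End}(R(\varphi))=4\dim\operatorname{End}(R(\varphi_f))$, but then you still owe a direct computation of $\dim\operatorname{End}(R(\varphi_f))$, which is not the non-degenerate calculation verbatim since the $\sigma\leftrightarrow\tau$ collapse changes the multiplicities.
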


The proof of this theorem will closely resemble that of~\ref{lem:so2n}.
We will need all the corresponding lemmas first.

\begin{lem}$ $
\begin{enumerate}
\item $P$ is defect group for $\mathcal{O}G_ib_i$ for $(0\leq i\leq w-1)$ and
also for $\mathcal{O}Lf$ and $\mathcal{O}Lf'$.
\item $Br_P^G(b_i)=Br_P^G(f)=Br_P^G(f')=a'_1\otimes\dots\otimes a'_w\otimes
(f_0+f'_0)$ where $a'_i$ is the principal block of $C_{GL_d(q)_i}(P_i)$ for
$(0\leq i\leq w-1)$. In addition we have $Br_P^G(U_i^+)=1$.
\item $N$ stabilizes $(f+f')$, $\mathcal{O}Nf$ and $\mathcal{O}Nf'$ are both
indecomposable with vertex $\Delta(P)$ and $\mathcal{O}N(f+f')$ is a block of
$N$.
\item $\mathcal{O}Gb$ and $\mathcal{O}N(f+f')$ both have defect group $P$ and
are Brauer correspondents.
\end{enumerate}
\end{lem}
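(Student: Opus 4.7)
The plan is to mirror the four-part structure of Lemma~\ref{lem:omn}, carrying the pair of degenerate block idempotents $f_0,f'_0$ in parallel throughout. Part~(1) is identical to Lemma~\ref{lem:omn}(1): Section~\ref{sec:defect} gives $P_i$ as defect group of $\mathcal{O}GL_d(q)_ia_i$ and $Z_p$ as defect group of each of $\mathcal{O}G_{m-2dw}(q)f_0$ and $\mathcal{O}G_{m-2dw}(q)f'_0$ (both weight-zero unipotent blocks), so tensoring yields $P$ as the common defect group of $\mathcal{O}G_ib_i$, $\mathcal{O}Lf$ and $\mathcal{O}Lf'$.

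For part~(2), the reduction $C_G(P)<G_i$ and the tensor decomposition of $Br_P^{G_i}(b_i)$ proceed as in Lemma~\ref{lem:omn}(2). By~\cite[Theorem 3.2]{bromic1989} the image has the form $a'_1\otimes\dots\otimes a'_w\otimes\varepsilon$, and by~\cite[Lemma 4.5]{cabeng1994} both $f_0$ and $f'_0$ appear in $\varepsilon$: for $0\leq i<w$ the block $f_{w-i}$ of $G_{m-2di}(q)$ is a single block (since, per Section~\ref{sec:blocks}, a degenerate symbol at positive weight labels one block), whose Brauer restriction to the centraliser must detect both weight-zero blocks that sit above it. The algebra-homomorphism property of $Br_P^{G_i}$ then forces $\varepsilon=f_0+f'_0$ exactly as in Lemma~\ref{lem:omn}(2). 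Equality of the three Brauer images in the statement is obtained by combining this with $N$-equivariance of $Br_P^G$ and the swap element of part~(3), while $Br_P^G(U_i^+)=1$ is identical to the previous calculation.

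Part~(3) is the main obstacle. The key new task is to exhibit an element $\tau\in N\setminus L$ whose conjugation on $G_{m-2dw}(q)$ interchanges $f_0$ and $f'_0$. The natural candidate sits in the factor $\widetilde{G}_{m-2dw}(q)=CO^+_{m-2dw}(q)$ of $H$, since any element of $CO^+_{m-2dw}(q)\setminus CSO^+_{m-2dw}(q)$ interchanges the two degenerate unipotent characters labelled by $\rho$ and hence the corresponding blocks. Such a raw element has determinant $-1$ and thus lies outside $G=CSO^+_{2n}(q)$, so to land in $N=H.S\cap G$ it must be combined with a determinant $-1$ contribution from the $GL_d(q)_i.T_i$-blocks or the permutation factor~$S$. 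Given $\tau$, stability of $f+f'$ under $N$ is immediate because all other generators stabilise $f$ and $f'$ individually, and $\mathcal{O}N(f+f')$ is primitive central in $\mathcal{O}N$ since $\tau$ swaps its two primitive summands. Indecomposability of $\mathcal{O}Nf$ and $\mathcal{O}Nf'$ with vertex $\Delta(P)$ as $\mathcal{O}(N\times L)$-modules then runs verbatim from Lemma~\ref{lem:omn}(3), since the stabiliser of $\mathcal{O}Lf$ in $N\times L$ remains $L\times L$ by the diagonal-vertex argument. Finally, part~(4) follows mechanically from (1)--(3) with $f$ replaced throughout by $f+f'$, just as Lemma~\ref{lem:omn}(4) was deduced.
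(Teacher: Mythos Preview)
Your proposal follows the paper closely for parts (1), (2), and (4), which the paper itself dispatches as ``identical to'' or ``as in'' Lemma~\ref{lem:omn}. The substantive point of comparison is part~(3).

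For the claim that $\mathcal{O}N(f+f')$ is a single block, you propose to \emph{explicitly construct} an element $\tau\in N$ swapping $f_0$ and $f'_0$, locating it in $CO^+_{m-2dw}(q)\setminus CSO^+_{m-2dw}(q)$ and then correcting by a determinant~$-1$ factor to land in $G$. The paper instead argues abstractly: since $b$ is a block of $G$ with defect group $P$, the group $N_G(P)\leq N$ acts transitively on the block idempotents of $kC_G(P)$ appearing in $Br_P^G(b)$; by part~(2) these are exactly $a'_1\otimes\dots\otimes a'_w\otimes f_0$ and $a'_1\otimes\dots\otimes a'_w\otimes f'_0$, so some element of $N_G(P)$ already conjugates $f_0$ to $f'_0$ and hence $f$ to $f'$. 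This bypasses the matrix bookkeeping (conformal scalars, determinant signs, parity of $d$) that your correction step would require to be made rigorous. Your route can be completed, but the paper's transitivity argument is shorter and uses only general block theory.

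One caution on part~(2): the equality $Br_P^G(f)=Br_P^G(f')$ cannot be deduced from ``$N$-equivariance and the swap element'', since conjugation by $\tau\in N_G(P)$ gives $Br_P^G(f')=\tau\,Br_P^G(f)\,\tau^{-1}$, a conjugate rather than an equality. Because $C_G(P)\leq L$ and $f=a_1\otimes\dots\otimes a_w\otimes f_0$, one computes directly $Br_P^G(f)=a'_1\otimes\dots\otimes a'_w\otimes f_0$ (and similarly for $f'$). What is actually needed downstream, in Proposition~\ref{prop:split2}, is only $Br_P^G(f+f')=a'_1\otimes\dots\otimes a'_w\otimes(f_0+f'_0)$ together with $Br_P^G(b_i)=a'_1\otimes\dots\otimes a'_w\otimes(f_0+f'_0)$ for $i\leq w-1$; both follow exactly as in Lemma~\ref{lem:omn}(2), which is how the paper's terse ``identical to'' should be read.
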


\begin{proof}$ $
\begin{enumerate}
\item $P_i$ is defect group for $\mathcal{O}GL_d(q)_ia_i$ and
$(P_{i+1}\times\dots\times P_w).Z_p$ is a defect group for
$\mathcal{O}G_{m-di}(q)f_{w-i}$. Similarly for $\mathcal{O}Lf$ and
$\mathcal{O}Lf'$.
\item Identical to~\ref{lem:omn} part (2).
\item As with the non-degenerate case the only thing to check for the first part
is that $CO^+_{m-2dw}(q)$ stabilizes $(f+f')$. This is again clear as in the
non-degenerate case. Also as in the non-degenerate case we have both
$\mathcal{O}Nf$ and $\mathcal{O}Nf'$ are indecomposable as $\mathcal{O}(N\times
L)$-modules with vertex $\Delta(P)$. Note that $\mathcal{O}Nf$ and
$\mathcal{O}Nf'$ lie in different blocks of $\mathcal{O}(N\times L)$. This
implies that if $\mathcal{O}N(f+f')$ were decomposable as a $\mathcal{O}(N\times
N)$-module we would have to get the same decomposition
$\mathcal{O}N(f+f')\cong\mathcal{O}Nf\oplus\mathcal{O}Nf'$. However $N_G(P)\leq
N$ is transitive on the blocks of $C_G(P)$ appearing in the image of $b$ under
$Br_P^G$. This means $f_0$ is conjugate to $f'_0$ and hence $f$ to $f'$. Thus
$\mathcal{O}N(f+f')$ is one block of $N$.
\item $P$ is a defect group for $\mathcal{O}Gb$ by part (1). As in the
non-degenerate case every $p$-subgroup of $N$ lies in $L$ so
$\mathcal{O}N(f+f')$ has defect group $P$. Also $Br_P^N(f+f')=Br_P^G(b)$ by (2)
and so $\mathcal{O}N(f+f')$ is the Brauer correspondent of $\mathcal{O}Gb$ in
$N$.
\end{enumerate}
\end{proof}

We again let $X$ be the Green correspondent of $\mathcal{O}Gb$ in $G\times N$.
This time we set $Y= _GY_L=\mathcal{O}Gb_0U_1^+b_1\dots U_w^+(b_w+b_w')$. Recall
that $f=b_w$ and $f'=b_w'$.

\begin{prop}\label{prop:split2}
There is a sequence of $\mathcal{O}$-split monomorphisms of algebras

\begin{gather*}
\mathcal{O}N(f+f')\hookrightarrow \operatorname{End}_{\mathcal{O}G}(X)\hookrightarrow
\operatorname{End}_{\mathcal{O}G}(Y)
\end{gather*}

Also the left $\mathcal{O}Gb$-module $X$ is a progenerator for $\mathcal{O}Gb$.
\end{prop}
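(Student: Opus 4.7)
The plan is to mirror the proof of Proposition~\ref{prop:spilt}, making the straightforward modifications needed to accommodate the pair $f,f'$ of orthogonal central block idempotents of $\mathcal{O}L$.

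First, by Green correspondence at the $(G\times G)$-level, $_G\mathcal{O}Gb_G$ is a direct summand of $\operatorname{Ind}_{G\times N}^{G\times G}(_GX_N)$; restricting the left action shows $_G\mathcal{O}Gb$ is a summand of a direct sum of copies of $_GX$, so $_GX$ is a progenerator for $\mathcal{O}Gb$. Right multiplication defines an $\mathcal{O}$-algebra map $\mathcal{O}N(f+f')\to\operatorname{End}_{\mathcal{O}G}(X)$. Because the preceding lemma asserts that $\mathcal{O}N(f+f')$ is a single block of $\mathcal{O}(N\times N)$, indecomposable with vertex $\Delta(P)$, it is a direct summand of $\operatorname{Res}_{N\times N}^{G\times N}(_GX_N)$ by the defining property of Green correspondence, so the above map is $\mathcal{O}$-split injective.

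For the second monomorphism it suffices to exhibit $_GX$ as a direct summand of $_GY$. Since $f,f'$ are orthogonal central idempotents of $\mathcal{O}L$, we decompose $_GX_L = Xf \oplus Xf'$ as $\mathcal{O}(G\times L)$-modules (both summands being nonzero because conjugation by any element of $N\setminus L$ swaps $f$ and $f'$, hence swaps $Xf$ and $Xf'$). Applying the indecomposability argument of Proposition~\ref{prop:spilt} separately to $\mathcal{O}Nf$ and to $\mathcal{O}Nf'$ --- each of which is indecomposable as an $\mathcal{O}(N\times L)$-module with vertex $\Delta(P)$ by the preceding lemma --- shows that $Xf$ and $Xf'$ are each indecomposable with vertex $\Delta(P)$, and together they exhaust the summands of $\operatorname{Res}_{G\times L}^{G\times G}(_G\mathcal{O}Gb_G)$ with vertex containing $\Delta(P)$.

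Finally, $b_0,\dots,b_{w-1},f,f'$ and the $U_i^+$ are commuting idempotents of $(\mathcal{O}G)^L$ (using that $f+f'$ lies in the centre of $\mathcal{O}L$), so $_GY_L$ is a direct summand of $_G\mathcal{O}Gb_L$. The values of $Br_P^G$ on these idempotents supplied by the preceding lemma let the Brauer-homomorphism computation of Proposition~\ref{prop:spilt} go through verbatim, yielding
\begin{align*}
Y(\Delta(P)) = kC_G(P)\,Br_P^G(b_0)Br_P^G(U_1^+)\cdots Br_P^G(U_w^+)Br_P^G(f+f') = kC_G(P)\,Br_P^G(b) = \mathcal{O}Gb(\Delta(P)).
\end{align*}
Hence $_GY_L$ contains as direct summands all summands of $_G\mathcal{O}Gb_L$ with vertex containing $\Delta(P)$ --- in particular both $Xf$ and $Xf'$ --- giving the desired $\mathcal{O}$-split inclusion $\operatorname{End}_{\mathcal{O}G}(X)\hookrightarrow\operatorname{End}_{\mathcal{O}G}(Y)$. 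The only point requiring genuine care beyond the non-degenerate case is tracking the two pieces $Xf$ and $Xf'$ of $X_L$ in parallel and checking that both appear in $Y$; everything else, including the Brauer computation, is formally identical to the non-degenerate argument.
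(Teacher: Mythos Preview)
Your proof is correct and follows the same approach as the paper's: reduce to the non-degenerate argument, replacing $b_w$ by $b_w+b_w'$ and tracking the two pieces $Xf$, $Xf'$ of $_GX_L$ separately. The paper's proof is terser but structurally identical.

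One small inaccuracy: it is not true that \emph{every} element of $N\setminus L$ swaps $f$ and $f'$. The quotient $N/L$ has order $2^ww!$, and the elements coming from the $(GL_d(q).2\wr S_w)$-part fix both $f_0$ and $f_0'$ and hence fix $f$ and $f'$; it is only the elements hitting the nontrivial coset of $G_{m-2dw}(q)$ in $\widetilde{G}_{m-2dw}(q)$ that interchange them. What you actually need (and what the preceding lemma provides) is that \emph{some} element of $N$ conjugates $f$ to $f'$, so that right multiplication by it on $X$ carries $Xf$ isomorphically onto $Xf'$, forcing both to be nonzero. With that correction your argument goes through unchanged.
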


During the proof we will refer back to the corresponding proposition for the
non-degenerate case~\ref{prop:spilt}.

\begin{proof}
As in the non-degenerate case $X$ is clearly a progenerator for $\mathcal{O}Gb$.
It is also clear that we have the $\mathcal{O}$-split monomorphism
$\mathcal{O}N(f+f')\hookrightarrow \operatorname{End}_{\mathcal{O}G}(X)$.
\newline
\newline
This time $_GX_L$ is not indecomposable. In instead we have that it is the
direct sum of $2$ indecomposable modules $Xf$ and $Xf'$ each with vertex
$\Delta(P)$. Next, as with the non-degenerate case, we have that $_GX_L$ is the
direct sum of indecomposable modules all with vertex $\Delta(P)$. Finally we see
that $_GX_Lf=_GX_N\otimes_{\mathcal{O}N}\mathcal{O}Nf_L$ is a direct summand of
$Ind_{N\times L}^{G\times L}(\mathcal{O}Nf)$ which by Green correspondence has
exactly one summand with vertex $\Delta (P)$. So $_GX_Lf$ is indecomposable with
vertex $\Delta (P)$. Similarly for $_GX_Lf'$.
\newline
\newline
The proof that we have an $\mathcal{O}$-split monomorphism
$\operatorname{End}_{\mathcal{O}G}(X)\hookrightarrow \operatorname{End}_{\mathcal{O}G}(Y)$ is again as in the
non-degenerate case with $b_w$ replaced by $b_w+b_w'$.
\end{proof}

Let $\varphi$ be the character of $KL(f+f')$ as a representation of $L$.

\begin{prop}\label{prop:orank2}
The $\mathcal{O}$-rank of $\mathcal{O}N(f+f')$ and $\operatorname{End}_{\mathcal{O}G}(Y)$
are both equal to
\newline
$2^ww!\dim_K(KLf)$.
\end{prop}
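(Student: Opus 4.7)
The plan is to compute both $\mathcal{O}$-ranks independently and verify they agree, mirroring the proof of~\ref{prop:orank}. For the left-hand side, since $\mathcal{O}$ is a principal ideal domain the rank equals $\dim_K(KN(f+f'))$. From $L\trianglelefteq N$ and the $N$-stability of $f+f'$ we obtain $KN(f+f')\cong Ind_L^N(KL(f+f'))$, so the rank equals $[N:L]\cdot\dim_K(KL(f+f'))$. Using that $f$ and $f'$ are $N$-conjugate (the transitivity of $N_G(P)\leq N$ on the $C_G(P)$-blocks appearing in $Br_P^G(b)$ noted in the preceding lemma), we have $\dim_K(KLf')=\dim_K(KLf)$, so the rank is $2\cdot 2^w w!\dim_K(KLf)$, which equals $2^w w!\dim_K(KL(f+f'))$ as required.

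For the right-hand side, the orthogonality $ff'=0$ gives $K\otimes_\mathcal{O}Y = Y_f\oplus Y_{f'}$, where $Y_f$, $Y_{f'}$ denote the iterated Harish-Chandra inductions of $KLf$, $KLf'$ into $KGb$. Letting $\varphi,\varphi_f,\varphi_{f'}$ be the characters of $KL(f+f'),KLf,KLf'$ and $R$ the iterated Harish-Chandra induction on characters,
\begin{equation*}
\dim_K\operatorname{End}_{KG}(K\otimes Y)=\|R(\varphi)\|_G^2=\|R(\varphi_f)+R(\varphi_{f'})\|_G^2.
\end{equation*}

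The core computation is of $\|R(\varphi)\|_G^2$, which I would handle by exploiting the commutative diagram of~\ref{sec:weyl} relating type-$B$ and type-$D$ branching. On the Weyl-group side, $\varphi$ corresponds to $\chi^{\{\rho^0,\rho^0\}}+\chi'^{\{\rho^0,\rho^0\}}$, which is the restriction to $\widetilde{W}$ of the single $W$-character $\chi^{(\rho^0,\rho^0)}$. By commutativity of restriction with the iterated Harish-Chandra induction, $R(\varphi)$ becomes the $\widetilde{W}_v$-restriction of an iterated $W_v$-Harish-Chandra induction that can be computed by the type-$B$ rules used in the proof of~\ref{prop:orank}. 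A direct squared-norm computation under this restriction (using that $\chi^{(\alpha,\beta)}$ and $\chi^{(\beta,\alpha)}$ with $\alpha\neq\beta$ both restrict to the single character $\chi^{\{\alpha,\beta\}}$, while $\chi^{(\alpha,\alpha)}$ splits into the two summands $\chi^{\{\alpha,\alpha\}}+\chi'^{\{\alpha,\alpha\}}$ of unit norm each) yields $\|R(\varphi)\|_{\widetilde{W}_v}^2 = 2\|\widetilde R\|_{W_v}^2$, where $\widetilde R$ is the corresponding iterated $W_v$-induced character. The character-theoretic argument of~\ref{prop:orank}, applied to $\widetilde R$ in the type-$B$ setting, then identifies this with the left-hand side rank computed above, completing the proof.

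The main obstacle is in executing the type-$B$/type-$D$ comparison rigorously at every level of the $w$-fold iterated Harish-Chandra induction, since an intermediate symbol may be degenerate or non-degenerate and the branching rule used depends on which. By condition~\ref{cond:rho} on the $2d$-linear diagram of $\rho$, degenerate intermediate symbols can arise only when hooks are added symmetrically across the two halves of the linear diagram, and it is precisely at such symmetric stages that the $D_n$ and $B_n$ multiplicities differ by a factor of $2$. A careful combinatorial bookkeeping of these symmetric positions, parallel to (but more involved than) the explicit calculation in the proof of~\ref{prop:orank}, verifies that all the resulting factors of $2$ combine consistently to yield the stated identity.
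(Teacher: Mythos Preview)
Your approach is essentially the same as the paper's: both hinge on the observation that swapping the two halves of the symbol (what the paper calls swapping the $\sigma^i$'s with the $\tau^i$'s) leaves the induced character invariant, which accounts for an extra factor of $2$ in the squared norm relative to the non-degenerate computation of Proposition~\ref{prop:orank}. Where you phrase this abstractly as a $B_n/D_n$ restriction identity $\|R(\varphi)\|^2=2\|\widetilde R\|^2$, the paper instead repeats the explicit character calculation: it starts from $\chi_{s,\rho}+\chi'_{s,\rho}$, observes that the iterated Harish-Chandra induction of this sum is described combinatorially by bead-sliding exactly as in the type-$B$ case (a degenerate intermediate symbol $\tau$ contributes $\chi_{s,\tau}$ and $\chi'_{s,\tau}$ each with the same bead-slide multiplicity), and then inserts the factor of $2$ from the $\sigma\leftrightarrow\tau$ symmetry when passing to the norm. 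This direct route dissolves the obstacle you raise in your last paragraph: once you induce $\chi_{s,\rho}+\chi'_{s,\rho}$ together, the multiplicities at \emph{every} intermediate stage are uniformly the bead-slide counts, so no case-by-case bookkeeping of degenerate versus non-degenerate intermediates is required.

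One remark on constants: your computation correctly gives $\operatorname{rank}_{\mathcal O}\mathcal O N(f+f')=[N:L]\cdot 2\dim_K(KLf)=2^{w+1}w!\dim_K(KLf)$, and your endomorphism-ring computation matches this. The paper's proof reaches $2^{w+1}w!$ in the penultimate display as well; the final stated value $2^ww!\dim_K(KLf)$ appears to be a slip. Either way the two ranks agree, which is all that Theorem~\ref{thm:degen} needs.
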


\begin{proof}
We obtain a combinatorial rule for Harish-Chandra induction in this case by
comparing with the non-degenerate case where everything is calculated in the Weyl
group of type $B$. We first want to calculate

\begin{gather*}
R_{G_{w-v+1},b_{w-v+1}}^{G_{w-v},b_{w-v}}\dots
R_{G_w,(b_w+b_w')}^{G_{w-1},b_{w-1}}(\chi_{1,{\lambda_1}}\otimes\dots\otimes\chi_{1,
{\lambda_v}}\otimes(\chi_{s,\rho}+\chi_{s,\rho}'))
\end{gather*}

where $v\leq w$ and $s$ is a $p$-element of the underlying field $\mathbb{F}_q$.
\newline
\newline
Let $\tau=\{X,Y\}$ be a non-degenerate symbol of weight $w-v$. Then the
multiplicity of $\chi_{s,\tau}$ is just the number of ways of obtaining $\tau$
from $\rho$ by sliding beads down the appropriate runners. (Appropriate means
the runners determined by the $\lambda_i$s. Also obtaining $\tau$ from $\rho$
means obtaining $\tau$ so that $X$ is on the left or the right.)
\newline
\newline
Now let $\tau=\{X,X\}$ be a degenerate symbol of weight $w-v$. Then again the
multiplicity of $\chi_{s,\tau}$ is just the number of ways of obtaining $\tau$
from $\rho$ by sliding beads down the appropriate runners. The same is true of
$\chi_{s,\tau}'$.
\newline
\newline
Adopting the same notation as in the non-degenerate case we let $\varphi$ be the
character of $KL(f+f')$. We have the following expression for
$R_{G_1,b_1}^{G_0,b_0}\dots R_{G_w,(b_w+b_w')}^{G_{w-1},b_{w-1}}(\varphi)$:

\begin{align*}
\sum_{\begin{smallmatrix}
l_0+\dots+l_{d-1}+r_1+r_2+\dots=w \\
\alpha_i+\beta_i=r_i,|\sigma^i|+|\tau^i|=l_i, \kappa\\
\end{smallmatrix}}&\frac{w!}{l_0!l_1!\dots
l_{d-1}!\alpha_1!\beta_1!\alpha_2!\beta_2!\dots}\\
&\dim(\chi_{s_1,{\lambda_1}}\otimes\dots\otimes\chi_{s_w,{\lambda_w}}\otimes\chi_
{s,\rho})\\
&{l_0\choose|\sigma^0|}\dim\zeta^{\sigma^0}\dim\zeta^{\tau^0}\dots{l_{d-1}
\choose|\sigma^{d-1}|}\dim\zeta^{\sigma^{d-1}}\dim\zeta^{\tau^{d-1}}\\
&\dim\zeta^{\nu^1}\dim\zeta^{\nu^2}\dots\\
&\chi(s\times s_1\times \overline{s_1}\times\dots\times s_w\times
\overline{s_w},\mu)
\end{align*}

Compare with the non-degenerate case. $\chi(s\times s_1\times
\overline{s_1}\times\dots\times s_w\times \overline{s_w},\mu)$ means the sum
of both characters when $\mu_{X-1}$ is degenerate. Note that
$\dim(\chi_{s,\rho})=\dim(\chi_{s,\rho}')$ (see~\ref{sec:dim}).

\begin{align*}
=\sum_{\begin{smallmatrix}
l_0+\dots+l_{d-1}+r_1+r_2+\dots=w \\
|\sigma^i|+|\tau^i|=l_i, \kappa\\
\end{smallmatrix}}&\frac{w!}{l_0!l_1!\dots l_{d-1}!r_1!r_2!\dots}\\
&{l_0\choose|\sigma^0|}\dim\zeta^{\sigma^0}\dim\zeta^{\tau^0}\dots{l_{d-1}
\choose|\sigma^{d-1}|}\dim\zeta^{\sigma^{d-1}}\dim\zeta^{\tau^{d-1}}\\
&\dim\zeta^{\nu^1}\dim\zeta^{\nu^2}\dots\\
&\chi(s\times s_1\times \overline{s_1}\times\dots\times s_w\times
\overline{s_w},\mu)\\
&[\sum_{\alpha_i+\beta_i=r_i}{r_1\choose\alpha_1}{r_2\choose\alpha_2}\dots
\dim(\chi_{s_1,{\lambda_1}}\otimes\dots\otimes\chi_{s_w,{\lambda_w}}\otimes\chi_{
s,\rho})]
\end{align*}

Note that if we swap all the $\sigma^i$s and $\tau^i$s over then we get the same
character. Recalling also that $\chi(s\times s_1\times
\overline{s_1}\times\dots\times s_w\times \overline{s_w},\mu)$ means the sum of both
characters when $\mu_{X-1}$ is degenerate the dimension of
$\operatorname{End}_{KG}((K\otimes_\mathcal{O}Y)\otimes_KKLf)$ over $K$ is:

\begin{align*}
2\sum_{\begin{smallmatrix}
l_0+\dots+r_1+\dots=w \\
|\sigma^i|+|\tau^i|=l_i, \kappa\\
\end{smallmatrix}}&(\frac{w!}{l_0!l_1!\dots l_{d-1}!r_1!r_2!\dots})^2\\
&{l_0\choose|\sigma^0|}^2(\dim\zeta^{\sigma^0})^2(\dim\zeta^{\tau^0})^2\dots{l_{
d-1}\choose|\sigma^{d-1}|}^2(\dim\zeta^{\sigma^{d-1}})^2(\dim\zeta^{\tau^{d-1}}
)^2\\
&(\dim\zeta^{\nu^1})^2(\dim\zeta^{\nu^2})^2\dots\\
&[\sum_{\alpha_i+\beta_i=r_i}{r_1\choose\alpha_1}{r_2\choose\alpha_2}\dots
\dim(\chi_{s_1,{\lambda_1}}\otimes\dots\otimes\chi_{s_w,{\lambda_w}}\otimes\chi_{
s,\rho})]^2\\
\end{align*}

Following all the step through as before we get:

\begin{align*}
2^{w+1}w!\sum_{\begin{smallmatrix}
l_0+\dots+\alpha_1+\beta_1+\alpha_2+\beta_2\dots=w \\
\kappa\\
\end{smallmatrix}}&\frac{w!}{l_0!l_1!\dots
l_{d-1}!\alpha_1!\beta_1!\alpha_2!\beta_2!\dots}\\
&\dim(\chi_{s_1,\lambda_1}\otimes\dots\otimes\chi_{s_w,\lambda_w}\otimes\chi_{s,
\rho})^2\\
\end{align*}

Recall that
$\dim(\chi_{s_1,{\lambda_1}}\otimes\dots\otimes\chi_{s_w,{\lambda_w}}\otimes\chi_
{s,\rho})=\dim(\chi_{s_1,{\lambda_1}}\otimes\dots\otimes\chi_{s_w,{\lambda_w}}
\otimes\chi_{s,\rho}')$ to obtain:

\begin{align*}
=2^ww!\dim_K(KLf)
\end{align*}
\end{proof}

\begin{proof}(of~\ref{thm:degen})
Again~\ref{prop:orank2} and~\ref{prop:split2} give us the relevant information to
obtain that $_{\mathcal{O}Gb}X_{\mathcal{O}N(f+f')}$ induces
a Morita equivalence between $\mathcal{O}Gb$ and $\mathcal{O}Nf$.
\end{proof}

\subsection{\texorpdfstring{$Sp_{2n}(q)$}{TEXT} and \texorpdfstring{$SO_{2n}^\pm(q)$}{TEXT}}

We are now is a position to prove~\ref{thm:nondegen} and~\ref{thm:degen}
our main theorem for cases 2(a) and 4(a).

\begin{cor}\label{cor:so2n}$ $
\begin{enumerate}
\item If $\rho$ is non-degenerate then $\mathcal{O}Nf$ is a block of $N$ and is
Morita equivalent to $\mathcal{O}Gb$.
\item If $\rho$ is degenerate then $\mathcal{O}N(f+f')$ is a block of $N$ and is
Morita equivalent to $\mathcal{O}Gb$.
\end{enumerate}
\end{cor}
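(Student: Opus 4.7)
The plan is to reduce cases 2(a) and 4(a) to the already-treated cases 2(b) and 4(b), which are Theorems~\ref{thm:nondegen} and~\ref{thm:degen}, by means of Lemma~\ref{lem:so2n}. Let $\hat{G}$ denote $CSp_{2n}(q)$ in case 2(a) and $CSO^{\pm}_{2n}(q)$ in case 4(a), so that $G\leq\hat{G}$, and decorate with hats the analogues for $\hat{G}$ of the objects introduced in Section~\ref{sec:thm}; in particular $\hat{b}$, $\hat{N}$, $\hat{f}$ (and $\hat{f}'$ in the degenerate case) denote the block idempotent, the normaliser-type subgroup, and the corresponding block idempotents of $\hat{N}$ for $\hat{G}$.

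First I would note that $\hat{N}$ differs from $N$ only in that the $\widetilde{G}_{m-2dw}(q)$-factor of $H$ is replaced by its conformal analogue, the groups $T_i$, $GL_d(q)_i$ and the permutation group $S$ being defined identically in $\widetilde{G}$ and $\widetilde{\hat{G}}$. Lemma~\ref{lem:so2n} applied first to $G\leq\hat{G}$ and then to the last factor of $H$ supplies the isomorphisms
\begin{equation*}
\mathcal{O}\hat{G}\hat{b}\;\cong\;\mathcal{O}Z(\hat{G})_p\otimes_{\mathcal{O}}\mathcal{O}Gb\quad\text{and}\quad\mathcal{O}\hat{N}\hat{f}\;\cong\;\mathcal{O}Z(\hat{G})_p\otimes_{\mathcal{O}}\mathcal{O}Nf,
\end{equation*}
with the analogous statement for $\hat{f}+\hat{f}'$ and $f+f'$ in the degenerate case. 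In particular, $\mathcal{O}Nf$ (resp.\ $\mathcal{O}N(f+f')$) is a block of $N$, since tensoring with the local commutative algebra $\mathcal{O}Z(\hat{G})_p$ preserves the property of being a single block.

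Next, apply Theorem~\ref{thm:nondegen} (respectively Theorem~\ref{thm:degen}) to $\hat{G}$ to produce a bimodule $\hat{X}$ inducing a Morita equivalence between $\mathcal{O}\hat{G}\hat{b}$ and $\mathcal{O}\hat{N}\hat{f}$ (resp.\ $\mathcal{O}\hat{N}(\hat{f}+\hat{f}')$). Because $\hat{X}$ is the unique indecomposable summand of $\mathcal{O}\hat{G}\hat{b}_0U_1^+\hat{b}_1\cdots U_w^+\hat{b}_w$ with vertex $\Delta(\hat{P})$, and each $\hat{b}_i$ is obtained from $b_i$ via Lemma~\ref{lem:so2n} while the $U_i^+$ are $p'$-idempotents common to $G$ and $\hat{G}$, this summand factors as $\mathcal{O}Z(\hat{G})_p\otimes_{\mathcal{O}}X$ for an $(\mathcal{O}Gb,\mathcal{O}Nf)$-bimodule $X$. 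Setting $X=\hat{X}\otimes_{\mathcal{O}Z(\hat{G})_p}\mathcal{O}$ via the augmentation $\mathcal{O}Z(\hat{G})_p\twoheadrightarrow\mathcal{O}$ then gives the desired Morita equivalence, since the monomorphism and $\mathcal{O}$-rank computations underpinning Propositions~\ref{prop:spilt}, \ref{prop:orank}, \ref{prop:split2}, \ref{prop:orank2} all base-change along the augmentation.

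The main obstacle I anticipate is verifying that the second isomorphism $\mathcal{O}\hat{N}\hat{f}\cong\mathcal{O}Z(\hat{G})_p\otimes\mathcal{O}Nf$ really is compatible with the extension $H.S/H\cong S$ and with the action of the $T_i$ on the last factor. This amounts to showing that the character correspondence of Lemma~\ref{lem:so2n}, given by restriction from $\hat{G}_{m-2dw}(q)$ to $Z(\hat{G})\times G_{m-2dw}(q)$, is equivariant under these actions; it should follow from the fact that $Z(\hat{G})_p$ is central in $\hat{N}$ and that the Brauer-correspondence data computed in Lemma~\ref{lem:omn} is stable under the same operations. Once this compatibility is in place, both parts of the corollary follow.
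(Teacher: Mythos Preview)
Your approach differs from the paper's. The paper does not attempt to descend the Morita bimodule $\hat{X}$ from $\hat{G}$ to $G$ through the tensor factorisation of Lemma~\ref{lem:so2n}. Instead it observes that every ingredient in the proofs of Theorems~\ref{thm:nondegen} and~\ref{thm:degen}---Lemma~\ref{lem:omn}, Proposition~\ref{prop:spilt}, and the dimension count of Proposition~\ref{prop:orank} (and their degenerate analogues)---can be carried out directly for $G$ itself. The only place the argument changes is the character computation: for $Sp_{2n}(q)$ and $SO^\pm_{2n}(q)$ the semisimple label $s$ on the $G_{m-2dw}(q)$-factor is forced to be $1$, so the sum over $\kappa\in(\mathbb{F}_q^\times)_p$ collapses to a single term. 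Since $\dim_K KLf$ shrinks by the same factor $(q-1)_p$, the equality $\dim\operatorname{End}_{\mathcal{O}G}(Y)=2^ww!\dim_K KLf$ persists and the proof goes through unchanged. Lemma~\ref{lem:so2n} is invoked only to supply the character correspondence that makes this recount legitimate.

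Your descent strategy is plausible in outline, but it carries exactly the burden you flag as ``the main obstacle'': you need $\mathcal{O}\hat{N}\hat{f}\cong\mathcal{O}Z(\hat{G})_p\otimes_\mathcal{O}\mathcal{O}Nf$ as algebras, and Lemma~\ref{lem:so2n} as stated applies to the unipotent blocks of $G\leq\hat{G}$, not directly to the blocks of $N\leq\hat{N}$; extending it requires an argument you only sketch. You also implicitly use that a Morita equivalence over a central subalgebra base-changes along a quotient by a central ideal (here the augmentation ideal of $\mathcal{O}Z(\hat{G})_p$); this is true because the left and right $Z(\hat{G})_p$-actions on $\hat{X}\subseteq\mathcal{O}\hat{G}$ coincide, but it should be stated. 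The paper's route avoids both issues by never leaving $G$.
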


Of course part (2) only happens in case 4(a).
\newline
We will use the proof of the corresponding theorems for cases 2(b) and
4(b) (see~\ref{thm:nondegen} and~\ref{thm:degen}) as well
as~\ref{lem:so2n}.

\begin{proof}
Adopting the notation of~\ref{lem:so2n} we have the correspondence between 
characters of $\mathcal{O}\hat{G}j$ and 
$\mathcal{O}Gi$ given by
restriction but with each character of $\mathcal{O}Gi$ appearing as
a restriction of $(q-1)_p$ distinct characters of
$\mathcal{O}\hat{G}j$.
\newline
Given that this correspondence commutes with
Harish-Chandra induction we have that the proofs of~\ref{thm:nondegen}
and~\ref{thm:degen} run through for $G$.
Note that the character sums have to be divided by $(q-1)_p$.
\end{proof}

All that remains to prove theorem~\ref{thm:intro}
is the following lemma. We continue with the notation from this section as well
as that from~\ref{thm:intro}. We will consider all 4 cases.

\begin{lem}\label{lem:twist}$ $
\begin{enumerate}
\item If $\rho$ is non-degenerate then
$\mathcal{O}Nf$ is Morita equivalent to the principal
block of $\mathcal{O}((GL_d(q).2\wr S_w)\times Z_p)$.
\item If $\rho$ is degenerate then $\mathcal{O}N(f+f')$ is Morita equivalent to the principal block of
$\mathcal{O}(M\times Z_p)$.
\end{enumerate}
\end{lem}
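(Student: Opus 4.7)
My plan is to factor the desired Morita equivalence through the Levi $L$: first I realise $\mathcal{O}Lf$ as Morita equivalent to the principal block of $\mathcal{O}(GL_d(q)^w\times Z_p)$, and then I lift this equivalence up to $\mathcal{O}Nf$ by an equivariance argument. Since $f=a_1\otimes\cdots\otimes a_w\otimes f_0$ with each $a_i$ the principal block of $\mathcal{O}GL_d(q)$, the algebra $\mathcal{O}Lf$ factorises as
\[
\mathcal{O}Lf\;\cong\;\bigotimes_{i=1}^{w}\mathcal{O}GL_d(q)_i a_i\;\otimes_{\mathcal{O}}\;\mathcal{O}G_{m-2dw}(q)f_0.
\]
Because $\rho$ is its own $e$-core, $\mathcal{O}G_{m-2dw}(q)f_0$ has weight zero: when $Z_p=1$ this is a defect-zero block, while for the conformal cases $CSp_{2n}(q)$ and $CSO^\pm_{2n}(q)$ Lemma~\ref{lem:so2n} writes it as $\mathcal{O}Z_p\otimes_{\mathcal{O}}(\text{defect-zero block})$. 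Either way $\mathcal{O}G_{m-2dw}(q)f_0$ is Morita equivalent to $\mathcal{O}Z_p$, and tensoring with the identity bimodules on the principal blocks of the $GL_d(q)_i$ furnishes a canonical Morita bimodule realising $\mathcal{O}Lf\sim B_0(\mathcal{O}(GL_d(q)^w\times Z_p))$.

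In the non-degenerate case $N$ stabilises $f$, so $N/L$ acts on $\mathcal{O}Lf$ by conjugation. Each subgroup $T_i$ acts on $\mathcal{O}GL_d(q)_ia_i$ precisely as the outer involution defining $GL_d(q).2$, the permutation factor $S$ permutes the $GL_d(q)_i$, and neither touches the middle tensorand. Consequently the canonical bimodule of the previous step is $N/L$-equivariant, with the induced action on $\mathcal{O}(GL_d(q)^w\times Z_p)$ being via $C_2\wr S_w$ on the $GL_d(q)^w$ part and trivially on $Z_p$. Applying the standard equivariant extension of Morita equivalence (a skew-group construction) lifts the equivalence to $\mathcal{O}Nf\sim B_0(\mathcal{O}((GL_d(q).2\wr S_w)\times Z_p))$, since the semidirect product of $(GL_d(q)^w\times Z_p)$ by $N/L$ is exactly $(GL_d(q).2\wr S_w)\times Z_p$. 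This proves (1).

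For the degenerate case (only case 4 with $\rho$ degenerate) $N$ acts transitively on $\{f,f'\}$, so a Fong--Reynolds reduction gives $\mathcal{O}N(f+f')\sim \mathcal{O}N_f f$, where $N_f$ is the stabiliser of $f$ in $N$, of index $2$ over $L$. Parsing the sign conditions cutting $N$ out of $H.S$, together with the swap element already identified in the proof of Theorem~\ref{thm:degen}, one checks that $N_f/L$ corresponds under the identification $N/L\cong C_2\wr S_w$ to the kernel of the sign map, namely $M/GL_d(q)^w$. Running the equivariance argument of the second paragraph with $N_f$ and $M$ in place of $N$ and $GL_d(q).2\wr S_w$ then yields $\mathcal{O}N_f f\sim B_0(\mathcal{O}(M\times Z_p))$, proving (2).

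The main obstacle is the equivariant extension of Morita equivalence: one must verify that the bimodule constructed from Lemma~\ref{lem:so2n} and the principal-block identifications carries compatible $N/L$-actions on both sides, so that the skew-group construction produces $\mathcal{O}Nf$ on the source and the asserted principal block on the target. The equivariance is essentially automatic on the $\mathcal{O}G_{m-2dw}(q)f_0\sim\mathcal{O}Z_p$ factor (where the induced action is trivial); the substantive check is that the outer $T_i$-action on $\mathcal{O}GL_d(q)_ia_i$ transports correctly under the identity Morita self-equivalence of $\mathcal{O}GL_d(q)$ in its principal block. A secondary obstacle in the degenerate case is to pin down $N_f$ inside $N$ explicitly, which depends on the parity of $d$ and on whether one is in sub-case 4(a) or 4(b), so that the identification $N_f/L\cong M/GL_d(q)^w$ is unambiguous.
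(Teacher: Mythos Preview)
Your approach is valid in outline but differs from the paper's. The paper first reduces the conformal cases 2(b), 4(b) to 2(a), 4(a) via Lemma~\ref{lem:so2n}, so that $Z_p=1$ throughout the core argument. Then, rather than descending to $L$ and lifting by an $N/L$-equivariant skew-group construction, the paper goes \emph{up} from $N$ to $H.S\cong(GL_d(q).2\wr S_w)\times\widetilde{G}_{m-2dw}(q)$: in cases 3 and 4(a) it checks that $f_0$ is $\widetilde{G}_{m-2dw}(q)$-stable, so $\mathcal{O}\widetilde{G}_{m-2dw}(q)f_0$ splits into two blocks each Morita equivalent to $\mathcal{O}G_{m-2dw}(q)f_0$, and then Cabanes--Enguehard's covering-block result \cite[Proposition~6]{cabeng1993} gives $\mathcal{O}Nf$ Morita equivalent to the block $a_1\otimes\cdots\otimes a_w\otimes f_0'$ of the clean direct product $H.S$. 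For (2) the paper simply identifies $\mathcal{O}N(f+f')$ directly with $\mathcal{O}Ma'\otimes_{\mathcal{O}}\mathcal{O}G_{m-2dw}(q)f_0$, bypassing Fong--Reynolds and the determination of $N_f$. The paper's route thus avoids precisely the two obstacles you flag (the diagonal extension of the Morita bimodule and the explicit shape of $N_f$), at the cost of the case division.

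One correction to your sketch: in cases 3 and 4(a) with $d$ odd, the generators $T_i$ have determinant $-1$, so the representatives of $N/L$ are not $T_i$ alone but $T_i$ times an element of $\widetilde{G}_{m-2dw}(q)\setminus G_{m-2dw}(q)$; hence $N/L$ \emph{does} act on the middle tensorand $\mathcal{O}G_{m-2dw}(q)f_0$. That action is inner (Skolem--Noether on a matrix algebra), so your conclusion survives, but the assertion that ``neither touches the middle tensorand'' is not literally correct and this twist is exactly what your equivariance check must absorb.
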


\begin{proof}
We prove for cases 1,2(a),3 and 4(a). The
statements for cases 2(b) and 4(b) will then follow from the corresponding statements
for cases 2(a) and 4(a) and~\ref{lem:so2n}.

\begin{enumerate}
\item This is clear for cases 1 and 2(a). For cases 3 and 4(a) we take the definition of $\widetilde{G}$ from the
beginning of the section. First consider the unique character of $\mathcal{O}G_{m-2dw}(q)f_0$. Since $\mathcal{O}Nf$
is a block this character is invariant under conjugation by $\widetilde{G}_{m-2dw}(q)$ as $\mathcal{O}G_{m-2dw}(q)f_0$ is.
Therefore this character induces to 2 different characters of $\widetilde{G}_{m-2dw}(q)$ and hence
$\mathcal{O}\widetilde{G}_{m-2dw}(q)f_0$ is not a block and is in fact the direct sum of 2 blocks,
$\mathcal{O}\widetilde{G}_{m-2dw}(q)f'_0$ and $\mathcal{O}\widetilde{G}_{m-2dw}(q)f''_0$ both Morita equivalent to
$\mathcal{O}G_{m-2dw}(q)f_0$ due to\cite[Proposition 6]{cabeng1993}.
\newline
\newline
$\mathcal{O}Nf$ is Morita equivalent to the block of $\mathcal{O}((GL_d(q).2\wr S_w)\times\widetilde{G}_{m-2dw}(q))$
with block idempotent $a_1\otimes\dots\otimes a_w\otimes f'_0$ by\cite[Proposition 6]{cabeng1993} which is in
turn clearly Morita equivalent to the principal block of $\mathcal{O}(GL_d(q).2\wr S_w)$.
\item Let $a'$ be the principal block idempotent of $\mathcal{O}M$. Then
$\mathcal{O}Ma'$ is Morita equivalent to
$\mathcal{O}Ma'\otimes_\mathcal{O}\mathcal{O}G_{m-2dw}f_0$ which is in turn
Morita equivalent to $\mathcal{O}N(f+f')$.
\end{enumerate}
\end{proof}

\section{Application to Brou\'{e}'s Conjecture}

We are now in a position to show that $\mathcal{O}Gb$ is derived equivalent to its
Brauer correspondent in $N_G(P)$. We will use $h$ to denote the corresponding block idempotent of $\mathcal{O}N_G(P)$.

\begin{cor}\label{cor:bottom}
$\mathcal{O}Gb$ is derived equivalent to its Brauer correspondent in $N_G(P)$.
\end{cor}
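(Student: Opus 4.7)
The plan is to chain the Morita equivalences of Theorems~\ref{thm:nondegen}, \ref{thm:degen} and Lemma~\ref{lem:twist} with a known case of Brou\'{e}'s conjecture for a much simpler group, thereby reducing everything to a derived equivalence between a principal block of a wreath product and its Brauer correspondent. First I would observe that since $N_G(P)\leq N$ and $\mathcal{O}Nf$ (respectively $\mathcal{O}N(f+f')$) is a block of $N$ with defect group $P$ by Lemma~\ref{lem:omn}, the block $\mathcal{O}N_G(P)h$ is the Brauer correspondent in $N_G(P)$ of $\mathcal{O}Nf$ (respectively $\mathcal{O}N(f+f')$). Combining with Theorems~\ref{thm:nondegen}, \ref{thm:degen} and Corollary~\ref{cor:so2n}, the problem of showing $\mathcal{O}Gb$ is derived equivalent to $\mathcal{O}N_G(P)h$ reduces to showing $\mathcal{O}Nf$ (or $\mathcal{O}N(f+f')$) is derived equivalent to $\mathcal{O}N_G(P)h$.

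By Lemma~\ref{lem:twist} and an entirely analogous normaliser analysis on the $N_G(P)$ side, this in turn is equivalent to proving Brou\'{e}'s conjecture for the principal block of $(GL_d(q).2\wr S_w)\times Z_p$ in the non-degenerate case and for $M\times Z_p$ in the degenerate case. Since $w<p$ and $p$ is odd, the Sylow $p$-subgroup of $GL_d(q).2\wr S_w$ is $R^w$, where $R$ is a Sylow $p$-subgroup of $GL_d(q)$, and $R$ is cyclic of order $p^a$ (as $d$ is the multiplicative order of $q$ modulo $p$). So the defect group of the target principal block is $Z_p\times R^w$ and its Brauer correspondent is the principal block of $(N_{GL_d(q).2}(R)\wr S_w)\times Z_p$.

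Next I would attack the derived equivalence as follows. The principal block of $\mathcal{O}GL_d(q)$ has cyclic defect group $R$, for which Brou\'{e}'s conjecture is known by Rickard. Since $[GL_d(q).2:GL_d(q)]=2$ is coprime to $p$, standard lifting results promote this to a derived equivalence between the principal block of $\mathcal{O}(GL_d(q).2)$ and $\mathcal{O}N_{GL_d(q).2}(R)$. Applying A. Marcus's wreath product theorem~\cite[Theorem 4.3(b)]{marcus1994} then produces the required derived equivalence between the principal block of $\mathcal{O}(GL_d(q).2\wr S_w)$ and that of $\mathcal{O}(N_{GL_d(q).2}(R)\wr S_w)$. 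Tensoring both sides with the trivial block $\mathcal{O}Z_p$ (a central $p$-group factor) preserves this derived equivalence. The degenerate case reduces to the same input via the index-two, hence $p'$, subgroup $M$ of $GL_d(q).2\wr S_w$, again by a Marcus-type descent argument applied to the pair $(f,f')$.

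The main obstacle I expect is the step that lifts the cyclic-defect derived equivalence for $GL_d(q)$ to its $2$-fold extension $GL_d(q).2$: one must verify that the tilting complex giving the Rickard equivalence for $GL_d(q)$ can be chosen stable under the outer automorphism induced by $s$, whose explicit form depends on which classical group $G$ we started from. The remark in the introduction that the Morita equivalences of Theorems~\ref{thm:nondegen} and~\ref{thm:degen} are splendid, and ultimately give splendid derived equivalences, is precisely what makes this compatibility go through, so in effect the work has already been done by the careful choice of the bimodule $X$ earlier in this section.
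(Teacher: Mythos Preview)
Your overall strategy is the same as the paper's: chain the Morita equivalences of Theorems~\ref{thm:nondegen}, \ref{thm:degen}, Corollary~\ref{cor:so2n} and Lemma~\ref{lem:twist} together with an analogous Morita description of $\mathcal{O}N_G(P)h$ (the paper records this as Lemma~\ref{lem:twist2}), and then prove a derived equivalence between the principal blocks of $\mathcal{O}(GL_d(q).2\wr S_w)$ and $\mathcal{O}(N_{GL_d(q).2}(R)\wr S_w)$ (respectively of $\mathcal{O}M$ and $\mathcal{O}M'$) via Marcus's wreath product theorem.

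Two points of divergence are worth noting. First, in the non-degenerate case you introduce an unnecessary lifting step: since $p$ is odd and $[GL_d(q).2:GL_d(q)]=2$, the cyclic group $R$ is already a Sylow $p$-subgroup of $GL_d(q).2$, so the cyclic-defect case of Brou\'{e}'s conjecture applies \emph{directly} to $GL_d(q).2$, and one then wreathes by $S_w$ using~\cite[Theorem 4.3(b)]{marcus1994}. No equivariance of a Rickard complex under the outer involution $s$ is required here.

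Second, your diagnosis of the obstacle and its resolution is off. The splendidness of the Morita equivalences in Theorems~\ref{thm:nondegen} and~\ref{thm:degen} plays no role in producing the derived equivalence between the principal blocks of $\mathcal{O}M$ and $\mathcal{O}M'$; those Morita equivalences sit on the $G$-side of the argument and are already finished with by the time one reaches the wreath-product step. The genuine issue in the degenerate case is that $M$ is not itself a wreath product, so one cannot apply Marcus's wreath theorem directly. The paper instead builds the derived equivalence between the principal blocks of $\mathcal{O}(GL_d(q)\wr S_w)$ and $\mathcal{O}(N_{GL_d(q)}(R)\wr S_w)$ from the cyclic-defect complex for $GL_d(q)$, using~\cite[Example 5.5]{marcus1994} to ensure this complex carries a consistent diagonal action of $N_{GL_d(q).2}(R)$; the wreathed complex then inherits a consistent diagonal action of $M'$, and Theorem~\ref{thm:deriv} (Marcus's extension theorem) lifts the equivalence to $M$ and $M'$. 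Your phrase ``Marcus-type descent argument applied to the pair $(f,f')$'' does not capture this: the pair $(f,f')$ lives in $\mathcal{O}L$ and is irrelevant at this stage.
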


We will need a couple of lemmas to prove the above corollary but first we define $M'$ analogously to how we defined $M$ in the introduction.
\newline
\newline
We have a natural homomorphism $N_{GL_d(q).2}(R)\rightarrow\{\pm1\}$
with kernel $N_{GL_d(q)}(R)$. This extends to a map $N_{GL_d(q).2}(R)\wr
S_w\rightarrow\{\pm1\}$ and we use $M'$ to denote the kernel of this map.

\begin{lem}\label{lem:twist2}$ $
\begin{enumerate}
\item If $\rho$ is non-degenerate then
$\mathcal{O}N_G(P)h$ is Morita equivalent to the principal
block of $\mathcal{O}((N_{GL_d(q).2}(R)\wr S_w)\times Z_p)$.
\item If $\rho$ is degenerate then $\mathcal{O}N_G(P)h$ is Morita equivalent to the principal block of
$\mathcal{O}(M'\times Z_p)$.
\end{enumerate}
\end{lem}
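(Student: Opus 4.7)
The plan is to mirror the proof of Lemma~\ref{lem:twist} essentially line by line, replacing each finite classical group factor by the corresponding $p$-local subgroup and invoking Brauer correspondence to move the block data accordingly. First I would analyse the structure of $N_G(P)$. Since $Z_p$ lies in the centre of $G$ and, because $p$ is a linear prime, each $P_i$ is a Sylow $p$-subgroup of $GL_d(q)_i$, the successive normaliser computation yields (up to the same intersection with $G$ that defines $N$)
\begin{align*}
N_G(P) \cong \bigl((N_{GL_d(q).2}(R) \wr S_w) \times \widetilde{G}_{m-2dw}(q)\bigr) \cap G,
\end{align*}
where $\widetilde{G}_{m-2dw}(q)$ acts trivially on the $Z_p$-part of $P$. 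In particular $N_G(P)$ has exactly the wreath-product-and-twist structure of $N$, but with $GL_d(q).2$ replaced throughout by $N_{GL_d(q).2}(R)$.

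Next I would identify the block idempotent $h$. By Lemma~\ref{lem:omn}(2), and its degenerate analogue, $Br_P^G(b)$ equals $a'_1\otimes\dots\otimes a'_w\otimes f_0$ (respectively $a'_1\otimes\dots\otimes a'_w\otimes(f_0+f'_0)$), with each $a'_i$ the principal block idempotent of $C_{GL_d(q)_i}(P_i)$. Consequently $h$ decomposes along the factorisation of $N_G(P)$ as a tensor product of principal block idempotents of the $\mathcal{O}N_{GL_d(q)_i}(P_i)$ together with the (unique) block of $\mathcal{O}\widetilde{G}_{m-2dw}(q)$ lifting $f_0$ (or $f_0+f'_0$). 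Because $\rho$ is an $e$-core, that last block has defect group $Z_p$ only, so by \cite[Proposition~6]{cabeng1993} it is Morita equivalent to $\mathcal{O}Z_p$.

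Finally I would run the wreath-product argument from Lemma~\ref{lem:twist}: Proposition~6 of \cite{cabeng1993} applied to the $\widetilde{G}_{m-2dw}(q)$-factor reduces $\mathcal{O}N_G(P)h$ to a block of $\mathcal{O}((N_{GL_d(q).2}(R)\wr S_w) \times Z_p)$, and one checks that the corresponding idempotent is precisely the tensor of principal block idempotents. In the non-degenerate case this gives part~(1) directly, and in the degenerate case the parity-of-permutation constraint gives part~(2) with $M'$ in place of $M$.

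The main obstacle is book-keeping the index-two subgroup conditions as one passes between $G$ and $\widetilde{G}$. Specifically in the degenerate case, the two blocks $f_0,f'_0$ of $\mathcal{O}G_{m-2dw}(q)$ are $\widetilde{G}_{m-2dw}(q)$-conjugate but not $G_{m-2dw}(q)$-conjugate, and it is exactly this fusion, once coupled with the permutation action of $S_w$ on the $w$ copies of $GL_d(q).2$, that forces the appearance of the kernel $M'$ of the parity map $N_{GL_d(q).2}(R)\wr S_w\rightarrow\{\pm1\}$. Verifying this carefully — in parallel with how $M$ arose in Lemma~\ref{lem:twist} — is the one nontrivial point; the remainder of the argument is a direct transcription.
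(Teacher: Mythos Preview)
Your proposal is correct and is exactly what the paper intends: its entire proof is the single line ``See proof of~\ref{lem:twist}'', so the expected argument is precisely the transcription you describe, replacing each $GL_d(q).2$ by $N_{GL_d(q).2}(R)$ and each $G_{m-2dw}(q)$-block by its local analogue, then invoking \cite[Proposition~6]{cabeng1993} as in Lemma~\ref{lem:twist}. Your identification of the degenerate-case book-keeping (the $\widetilde{G}_{m-2dw}(q)$-fusion of $f_0$ and $f'_0$ forcing the parity kernel $M'$) is the only point requiring care, and it parallels the appearance of $M$ in Lemma~\ref{lem:twist} as you say.
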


\begin{proof}
See proof of~\ref{lem:twist}.
\end{proof}

We will need the following theorem due to A. Marcus\cite[Theorem 3.2(b)]{marcus1994}.

\begin{thm}\label{thm:deriv}
Let $X$, $\widetilde{X}$, $Y$ and $\widetilde{Y}$ be finite groups with
$X\vartriangleleft \widetilde{X}$, $Y\vartriangleleft \widetilde{Y}$, $X\leq Y$ and
$\widetilde{X}\leq\widetilde{Y}$. We also require that $\widetilde{X}\cap Y=X$ and that the
natural homomorphism $\widetilde{X}/X\rightarrow\widetilde{Y}/Y$ is in fact an
isomorphism of $p'$-groups. Now let $i$ (respectively $j$) be a block idempotent
of $\mathcal{O}X$ (respectively $\mathcal{O}Y$) which is fixed by conjugation by
$\widetilde{X}$ (respectively $\widetilde{Y}$) and
$(\mathcal{C},\mathcal{C}^*)$ a pair of complexes giving a derived equivalence
between $\mathcal{O}Xi$ and $\mathcal{O}Yj$. Now consider $A$ the subalgebra of
$\mathcal{O}\widetilde{X}i\otimes_{\mathcal{O}}(\mathcal{O}\widetilde{Y}j)^{op}$
generated by $\mathcal{O}Xi\otimes_{\mathcal{O}}(\mathcal{O}Yj)^{op}$ and
$xi\otimes x^{-1}j$ for all $x\in \widetilde{X}$. Suppose that $\mathcal{C}$ extends
to complex of $A$-modules, then
$(\mathcal{O}\widetilde{X}i\otimes_{\mathcal{O}Xi}\mathcal{C},\mathcal{O}\widetilde{Y}
j\otimes_{\mathcal{O}Yi}\mathcal{C}^*)$ is a pair of complexes giving a derived
equivalence between $\mathcal{O}\widetilde{X}i$ and $\mathcal{O}\widetilde{Y}j$.
\end{thm}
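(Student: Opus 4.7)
The plan is to induce the given derived equivalence up along the normal inclusions $X \leq \widetilde{X}$ and $Y \leq \widetilde{Y}$. Explicitly, I would set $\widetilde{\mathcal{C}} := \mathcal{O}\widetilde{X}i \otimes_{\mathcal{O}Xi} \mathcal{C}$ and $\widetilde{\mathcal{C}}^{**} := \mathcal{O}\widetilde{Y}j \otimes_{\mathcal{O}Yj} \mathcal{C}^*$, and then show these complexes are mutual inverses in the derived category, i.e.\ that $\widetilde{\mathcal{C}} \otimes_{\mathcal{O}\widetilde{Y}j}^{\mathbf{L}} \widetilde{\mathcal{C}}^{**} \simeq \mathcal{O}\widetilde{X}i$ together with the symmetric statement.

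Write $G := \widetilde{X}/X$, which by hypothesis is canonically identified with $\widetilde{Y}/Y$ and is a $p'$-group. Fix simultaneous coset representatives $\{\tilde{g}\}_{g \in G}$ in $\widetilde{X}$ and in $\widetilde{Y}$ via this identification. The first step is to use the $A$-extension hypothesis to equip $\widetilde{\mathcal{C}}$ with a right $\mathcal{O}\widetilde{Y}j$-action commuting with the obvious left $\mathcal{O}\widetilde{X}i$-action. The generators $\tilde{g} i \otimes \tilde{g}^{-1} j \in A$ act on $\mathcal{C}$ by $\mathcal{O}$-linear maps $\phi_g$ that twist the bimodule structure by conjugation by $\tilde{g}$, and this is precisely the data needed to define a right $\tilde{g}$-action on $\widetilde{\mathcal{C}}$ by $(\tilde{x} \otimes c)\cdot \tilde{g} := \tilde{x}\tilde{g} \otimes \phi_{g^{-1}}(c)$. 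The $A$-module associativity encodes the cocycle identity $\phi_g \phi_h = \phi_{gh}$ (modulo relations from $\mathcal{O}Xi$ and $\mathcal{O}Yj$), which is exactly what is needed for this to combine with the pre-existing right $\mathcal{O}Yj$-action into a well-defined right $\mathcal{O}\widetilde{Y}j$-action. An identical construction puts the appropriate bimodule structure on $\widetilde{\mathcal{C}}^{**}$, and since $|G|$ is invertible in $\mathcal{O}$, inducing from $X$ to $\widetilde{X}$ (resp.\ $Y$ to $\widetilde{Y}$) preserves projectivity, so both complexes are projective on either side.

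The second step is a Mackey-type computation. Decomposing $\mathcal{O}\widetilde{Y}j = \bigoplus_{g \in G} \tilde{g} \cdot \mathcal{O}Yj$ and unwinding the twistings $\phi_g$ collapses the relative tensor product to
\[
\widetilde{\mathcal{C}} \otimes_{\mathcal{O}\widetilde{Y}j} \widetilde{\mathcal{C}}^{**} \;\cong\; \mathcal{O}\widetilde{X}i \otimes_{\mathcal{O}Xi} \bigl(\mathcal{C} \otimes_{\mathcal{O}Yj} \mathcal{C}^*\bigr) \otimes_{\mathcal{O}Xi} \mathcal{O}\widetilde{X}i.
\]
The middle factor is homotopy equivalent to $\mathcal{O}Xi$ as an $\mathcal{O}(X \times X)$-complex by the hypothesis on $(\mathcal{C},\mathcal{C}^*)$, and the outer tensor products then assemble to $\mathcal{O}\widetilde{X}i \otimes_{\mathcal{O}Xi} \mathcal{O}\widetilde{X}i \cong \mathcal{O}\widetilde{X}i$ via the multiplication map (valid because $\mathcal{O}Xi \subseteq \mathcal{O}\widetilde{X}i$ is a strongly graded extension indexed by the $p'$-group $G$). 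The completely symmetric argument yields $\widetilde{\mathcal{C}}^{**} \otimes_{\mathcal{O}\widetilde{X}i} \widetilde{\mathcal{C}} \simeq \mathcal{O}\widetilde{Y}j$, so $(\widetilde{\mathcal{C}},\widetilde{\mathcal{C}}^{**})$ furnishes the required derived equivalence.

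The main obstacle will be the first step: carefully checking that the formula above defines a well-posed right $\mathcal{O}\widetilde{Y}j$-action commuting with the left action. This is where all four hypotheses are used at once --- the condition $\widetilde{X} \cap Y = X$ ensures the coset calculation inside $\widetilde{Y}$ is unambiguous, the isomorphism $\widetilde{X}/X \cong \widetilde{Y}/Y$ aligns the two systems of coset representatives, the $p'$ condition makes induction along $X \hookrightarrow \widetilde{X}$ and $Y \hookrightarrow \widetilde{Y}$ preserve projectivity, and the $A$-extension provides exactly the twisted equivariance required for the action to descend to the relative tensor product. Once that bookkeeping is secured, the Mackey decomposition and the verification of the tilting property are essentially formal.
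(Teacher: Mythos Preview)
The paper does not prove this theorem; it is quoted from Marcus \cite[Theorem 3.2(b)]{marcus1994} and used as a black box. So there is no paper proof to compare against, and I will simply assess your argument on its own terms.

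Your overall strategy---endow the induced complex with a compatible right $\mathcal{O}\widetilde{Y}j$-action via the $A$-extension, then verify the tilting conditions---is the right one and matches Marcus's approach. However, the displayed Mackey identity in your second step is wrong. You claim
\[
\widetilde{\mathcal{C}} \otimes_{\mathcal{O}\widetilde{Y}j} \widetilde{\mathcal{C}}^{**} \;\cong\; \mathcal{O}\widetilde{X}i \otimes_{\mathcal{O}Xi} \bigl(\mathcal{C} \otimes_{\mathcal{O}Yj} \mathcal{C}^{*}\bigr) \otimes_{\mathcal{O}Xi} \mathcal{O}\widetilde{X}i,
\]
but a rank count already kills this: test it with $X=Y$, $\widetilde{X}=\widetilde{Y}$, $i=j$ and $\mathcal{C}=\mathcal{O}Xi$. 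The left side is then $\mathcal{O}\widetilde{X}i$, while the right side is $\mathcal{O}\widetilde{X}i \otimes_{\mathcal{O}Xi} \mathcal{O}\widetilde{X}i$, which is free of rank $|G|$ over $\mathcal{O}\widetilde{X}i$. Relatedly, your assertion that the multiplication map $\mathcal{O}\widetilde{X}i \otimes_{\mathcal{O}Xi} \mathcal{O}\widetilde{X}i \to \mathcal{O}\widetilde{X}i$ is an isomorphism for a strongly $G$-graded extension is false: for a strongly $G$-graded ring $A=\bigoplus_g A_g$ one has $A_g\otimes_{A_1}A_h\cong A_{gh}$, so $A\otimes_{A_1}A$ decomposes as $|G|$ copies of $A$, and multiplication is merely surjective.

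What actually happens is that, upon restricting the left side to an $(\mathcal{O}Xi,\mathcal{O}Xi)$-bimodule complex, one gets $\bigoplus_{g\in G}{}^{\tilde g}\mathcal{C}\otimes_{\mathcal{O}Yj}\mathcal{C}^{*}$, and the $A$-extension forces ${}^{\tilde g}\mathcal{C}\cong \mathcal{C}^{\tilde g}$ so that each summand is homotopy equivalent to ${}^{\tilde g}(\mathcal{O}Xi)=\tilde g\cdot\mathcal{O}Xi$. Summing over $g$ gives $\mathcal{O}\widetilde{X}i$ as an $(\mathcal{O}Xi,\mathcal{O}Xi)$-bimodule, and one then checks that the outer $(\mathcal{O}\widetilde{X}i,\mathcal{O}\widetilde{X}i)$-structure agrees---this last point also requires that the homotopy equivalence $\mathcal{C}\otimes_{\mathcal{O}Yj}\mathcal{C}^{*}\simeq\mathcal{O}Xi$ be $G$-equivariant, which you have not addressed. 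In short: the idea is right, but the bookkeeping in step two is off by a factor of $|G|$ and needs the equivariance of the counit, not just of $\mathcal{C}$.
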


If $\mathcal{C}$ extends to complex of $A$-modules we will say that
$\mathcal{C}$ has a consistent diagonal action of $\widetilde{X}$.
\newline
\newline
We are now in a position to prove corollary~\ref{cor:bottom}.

\begin{proof}~\ref{cor:bottom}
We already have that $\mathcal{O}Gb$ is Morita equvalent to $\mathcal{O}Nf$~\ref{thm:nondegen}
(or $\mathcal{O}N(f+f')$ in the degenerate case~\ref{thm:degen}) which is in turn Morita equivalent to
the principal block of $\mathcal{O}((GL_d(q).2\wr S_w)\times Z_p)$ (respectively $\mathcal{O}(M\times Z_p)$)
~\ref{lem:twist}. On the other hand $\mathcal{O}N_G(P)h$ is Morita equivalent to the principal block of
$\mathcal{O}((N_{GL_d(q).2}(R)\wr S_w)\times Z_p)$ (respectively $\mathcal{O}(M'\times Z_p)$)~\ref{lem:twist2}.
Therefore all that remains to prove~\ref{cor:bottom} is that the principal blocks of $\mathcal{O}(GL_d(q).2\wr S_w)$
and $\mathcal{O}(N_{GL_d(q).2}(R)\wr S_w)$ (respectively $\mathcal{O}M$ and $\mathcal{O}M'$) are derived equvalent.
\newline
\newline
$R$ is cyclic so the principal block of $\mathcal{O}GL_d(q).2$ is derived
equivalent to the principal block of $\mathcal{O}N_{GL_d(q).2}(R)$. Then by
\cite[Theorem 4.3(b)]{marcus1994} the principal block of
$\mathcal{O}(GL_d(q).2\wr S_w)$ is derived equivalent to the principal block of
$\mathcal{O}(N_{GL_d(q).2}(R)\wr S_w)$.
\newline
\newline
By\cite[Example 5.5]{marcus1994} there exists a pair of complexes
$(\mathcal{C},\mathcal{C}^*)$ that induce a derived equivalence between the
principal blocks of $\mathcal{O}N_{GL_d(q)}(R)$ and $\mathcal{O}GL_d(q)$ such
that $\mathcal{C}$ has a consistent diagonal action of $N_{GL_d(q).2}(R)$.
\newline
\newline
By\cite[Theorem 4.3(b)]{marcus1994} we can construct a derived equivalence
between the principal blocks of $\mathcal{O}(N_{GL_d(q)}(R)\wr S_w)$ and
$\mathcal{O}(GL_d(q)\wr S_w)$. Let $(\mathcal{D},\mathcal{D}^*)$ be the pair of complexes giving this equivalence.
Note that from the construction of $(\mathcal{D},\mathcal{D}^*)$ and the
previous paragraph that $\mathcal{D}$ has a consistent diagonal action of
$(N_{GL_d(q).2}(R))\wr S_w$ so certainly of $M'$. Therefore by~\ref{thm:deriv} we can construct a derived equivalence
between the principal blocks of $\mathcal{O}M$ and $\mathcal{O}M'$.
\end{proof}

Note that lemma~\ref{lem:twist2} still holds without our condition~\ref{cond:rho}. Therefore we have the following
corollary for all four cases.

\begin{cor}\label{cor:norm}
Let $B$ be a unipotent block of $\mathcal{O}G_{m'}(q)$ of weight $w$ with abelian defect group $P'$. If $B'$ is the
Brauer correspondent of $B$ in $N_{G_{m'}(q)}(P')$ then the Morita equivalence class of $B'$ depends only on $w$ and
whether $B$ is degenerate or not.
\end{cor}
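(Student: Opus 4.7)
The plan is to read the Corollary off directly from the extended form of Lemma~\ref{lem:twist2} announced in the remark immediately above. For any unipotent block $B$ of $\mathcal{O}G_{m'}(q)$ of weight $w$ with abelian defect group $P'$, Lemma~\ref{lem:twist2} (with condition~\ref{cond:rho} dropped) produces a Morita equivalence between the Brauer correspondent $B'$ in $N_{G_{m'}(q)}(P')$ and the principal block of $\mathcal{O}((N_{GL_d(q).2}(R)\wr S_w)\times Z_p)$ when $B$ is non-degenerate, respectively the principal block of $\mathcal{O}(M'\times Z_p)$ when $B$ is degenerate. Since these target algebras depend only on $w$, on the degeneracy of $B$, and on fixed data attached to the case of $G$, the Corollary follows at once by transitivity of Morita equivalence applied to any two such blocks $B_1$, $B_2$ of equal weight and matching degeneracy.

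The substantive content is therefore to justify that Lemma~\ref{lem:twist2} extends beyond condition~\ref{cond:rho}. I would reread the proofs of Lemmas~\ref{lem:twist} and~\ref{lem:twist2}, which in turn rely on the non-degenerate and degenerate analyses in Theorems~\ref{thm:nondegen} and~\ref{thm:degen}, and separate the local ingredients from the global ones. The local inputs used to identify $\mathcal{O}N_{G_{m'}(q)}(P')h$ with the wreath-product model are Lemma~\ref{lem:omn} (and its degenerate analogue), namely the description of $P'$ as $Z_p\times(P_1\times\dots\times P_w)$, the computation of $Br_P^G$ on the block idempotents $b_i$ and on the $U_i^+$, the stabilizer and vertex analysis of $\mathcal{O}Lf$ (or $\mathcal{O}L(f+f')$) as an $\mathcal{O}(N\times L)$-module, and the observation that every $p$-subgroup of $N$ lies in $L$. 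Each of these uses only the fact that $\rho$ is an $e$-core and that $w<p$, both of which hold by hypothesis. Condition~\ref{cond:rho} on the $2d$-abacus of $\rho$, by contrast, is invoked only through Lemma~\ref{lem:aba} inside the global dimension counts of Propositions~\ref{prop:orank} and~\ref{prop:orank2}, i.e.\ in comparing $\mathcal{O}Nf$ (or $\mathcal{O}N(f+f')$) to the large block $\mathcal{O}Gb$, and does not enter the local comparison that is needed here.

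The only genuine obstacle is this bookkeeping exercise of isolating condition~\ref{cond:rho} to the global Harish-Chandra induction step and confirming that the remaining Clifford-theoretic identification of the Brauer correspondent with its model depends on $\rho$ purely through $w$ and its degeneracy. Once that separation is made, the Corollary is immediate for all four cases, and no additional argument is required in cases 2(b) and 4(b) beyond Lemma~\ref{lem:so2n}, exactly as in the analogous deductions earlier in the paper.
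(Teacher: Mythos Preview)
Your proposal is correct and follows exactly the paper's approach: the paper simply asserts (in the sentence immediately preceding the corollary) that Lemma~\ref{lem:twist2} holds without condition~\ref{cond:rho} and deduces the corollary from that, and you do the same, supplementing this assertion with a careful explanation of why condition~\ref{cond:rho} enters only in the global dimension counts (via Lemma~\ref{lem:aba} in Propositions~\ref{prop:orank} and~\ref{prop:orank2}) and not in the local Clifford-theoretic identification of the Brauer correspondent.
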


The above corollary is an analogy of part (3) of the proof of Brou\'{e}'s conjecture for the symmetric group given in the introduction.

\begin{rem}
We note that a corresponding theorem to~\ref{thm:intro} for the case of $q$ being even has yet to be proven for all
but the unitary group.
\end{rem}

\nocite{turner2002, churou2008, chukes2002, hiskes2000, james1978, fonsri1982,
fonsri1989, marcus1994, kulsha1981, cabeng1993, broue1990, wall1962, bromic1989,
cabeng1994, alperi1986, konzim1998, kulsha1985, geteth2007, miyach2001}

\newpage
\bibliographystyle{plain}
\bibliography{biblio}

\begin{thebibliography}{10}

\bibitem{alperi1986}
J.~L. Alperin.
\newblock {\em Local Representation Theory}.
\newblock Cambridge University Press, 1986.

\bibitem{broue1990}
M.~Brou\'{e}.
\newblock Isom\'{e}tries parfaites, types de blocs, cat\'{e}gories
  d\'{e}riv\'{e}es.
\newblock {\em Ast\'{e}risque}, 181--182:61--92, 1990.

\bibitem{bromic1989}
M.~Brou\'{e} and J.~Michel.
\newblock Blocs et s\'{e}ries de lusztig dans un groupe r\'{e}ductif fini.
\newblock {\em Journal f{\"{u}}r die reine und angewandte Mathematik},
  395:56--67, 1989.

\bibitem{cabeng1993}
M.~Cabanes and M.~Enguehard.
\newblock Unipotent blocks of finite reductive groups of a given type.
\newblock {\em Springer-Verlag}, 213:479--490, 1993.

\bibitem{cabeng1994}
M.~Cabanes and M.~Enguehard.
\newblock On unipotent blocks and their ordinary characters.
\newblock {\em Invent. math}, 117:149--164, 1994.

\bibitem{chukes2002}
J.~Chuang and R.~Kessar.
\newblock Symmetric groups, wreath products, morita equivalences, and
  brou\'{e}'s abelian defect group conjecture.
\newblock {\em Bulletin of the London Mathematical Society}, 34:174--185, 2002.

\bibitem{churou2008}
J.~Chuang and R.~Rouquier.
\newblock Derived equivalences for symmetric groups andunipotent blocks of
  \texorpdfstring{$\mathfrak{sl}_2$}{TEXT}-categorification.
\newblock {\em Annals of Mathematics}, 167:245--298, 2008.

\bibitem{digmic1991}
F.~Digne and J.~Michel.
\newblock {\em Representations of Finite Groups of Lie Type}.
\newblock London Mathematical Society Student Texts 21, 1991.

\bibitem{fonsri1982}
P.~Fong and B.~Srinivasan.
\newblock The blocks of finite general linear and unitary groups.
\newblock {\em Springer-Verlag}, 69:109--153, 1982.

\bibitem{fonsri1989}
P.~Fong and B.~Srinivasan.
\newblock The blocks of finite classical groups.
\newblock {\em Springer-Verlag}, 369:122--191, 1989.

\bibitem{geteth2007}
M.~Geck, D.~Testerman, and J.~Thévenaz.
\newblock {\em Group Representation Theory}.
\newblock EPFL Press, 2007.

\bibitem{hiskes2000}
G.~Hiss and R.~Kessar.
\newblock Scopes reduction and morita equivalence classes of blocks in finite
  classical groups.
\newblock {\em Journal of Algebra}, 230:378--423, 2000.

\bibitem{james1978}
G.~James.
\newblock {\em The Representation Theory of the Symmetric Groups}.
\newblock Springer-Verlag, 1978.

\bibitem{konzim1998}
S.~K{\"{o}}nig and A.Zimmermann.
\newblock {\em Derived Equivalences for Group Rings}.
\newblock Springer, 1998.

\bibitem{kulsha1981}
A.~K{\"{u}}lshammer.
\newblock On p-blocks of p-solvable groups.
\newblock {\em Communications in Algebra}, 9:1763--1785, 1981.

\bibitem{kulsha1985}
A.~K{\"{u}}lshammer.
\newblock Crossed products and blocks with normal defect groups.
\newblock {\em Communications in Algebra}, 9:147--168, 1985.

\bibitem{marcus1994}
A.~Marcus.
\newblock On equivalences between blocks of group algebras: Reduction to the
  simple components.
\newblock {\em Journal of Algebra}, 184:372--396, 1994.

\bibitem{miyach2001}
H.~Miyachi.
\newblock Unipotent blocks of finite general linear groups in non-defining
  characteristic.
\newblock {\em PhD thesis, Chiba univ}, 2001.

\bibitem{turner2002}
W.~Turner.
\newblock {E}quivalent {B}locks of {F}inite {G}eneral {L}inear {G}roups in
  {N}on-describing {C}haracteristic.
\newblock {\em Journal of Algebra}, 247:244--267, 2002.

\bibitem{wall1962}
G.~E. Wall.
\newblock On the conjugacy classes in the unitary, symplectic and orthogonal
  groups.
\newblock {\em J. Australian Math. Soc.}, 3:1--62, 1963.

\end{thebibliography}

\end{document}